\def\@cite#1#2{[{{\bfseries #1}\if@tempswa , #2\fi}]}
\renewcommand{\section}{%
\@startsection{section}{1}{\z@}
{0.5truecm plus -1ex minus -.2ex}%
{1.0ex plus .2ex}{\bfseries\large}}
\def\@seccntformat#1{\csname the#1\endcsname.\ }
\numberwithin{equation}{section} 
\theoremstyle{theorem}
\newtheorem{thm}{Theorem}[section]
\newtheorem{lem}[thm]{Lemma}
\theoremstyle{definition}
\newtheorem{remark}{Remark}[section]
\newtheorem*{prth1.1}{Proof of Theorem 1.1}
\newtheorem*{prth1.2}{Proof of Theorem 1.2}
\newcommand{\ep}{\varepsilon}
\newcommand{\pa}{\partial}
\newcommand{\R}{\mathbb{R}}
\newcommand{\N}{\mathbb{N}}
\newcommand{\cl}[1]{{\overline#1}}
\newcommand{\Ombar}{\cl{\Omega}}
\newcommand{\Radon}{\mathcal{M}(\Ombar)}
\newcommand{\ue}{u_{\ep}}
\newcommand{\uie}{u_{0\ep}}
\newcommand{\ve}{v_{\ep}}
\newcommand{\vie}{v_{0\ep}}
\newcommand{\io}{\int_{\Omega}}
\newcommand{\dwto}{\searrow}
\newcommand{\phsp}[2]{C^{#1, \frac{#1}{2}} (\Ombar \times [#2])}
\newcommand{\phhsp}[2]{C^{2 + #1, 1 + \frac{#1}{2}} (\Ombar \times [#2])}
\newcommand{\phtsp}[2]{C^{1 + #1, \frac{1 + #1}{2}} (\Ombar \times [#2])}
\newcommand{\nrm}[2]{\| #1 \|_{ #2 (\Omega)}}
\newcommand{\hnrm}[3]{\| #1 \|_{ \phsp{#2}{#3} }}
\newcommand{\hhnrm}[3]{\| #1 \|_{ \phhsp{#2}{#3} }}
\newcommand{\htnrm}[3]{\| #1 \|_{ \phtsp{#2}{#3} }}
\newcommand{\norm}[2]{\| #1(\cdot, t) \|_{ #2 (\Omega)}}
\newcommand{\nnorm}[4]{\| #1 \|_{ #2 (#3; #4 (\Omega))}}
\newcommand{\embd}{\hookrightarrow}
\newcommand{\intd}[1]{{\rm d}#1}
\newcommand{\wsc}{\stackrel{\star}{\rightharpoonup}}
\newcommand{\til}[1]{\widetilde{#1}}
\newcommand{\altil}{\til{\alpha}}
\newcommand{\utile}{\til{u_{\ep}}}
\newcommand{\vtile}{\til{v_{\ep}}}
\newcommand{\three}{I\hspace{-1.2pt}I\hspace{-1.2pt}I}
\newcommand{\four}{I\hspace{-1.2pt}V}
\DeclareBoldMathCommand{\bmass}{1}
\begin{document}
\footnote[0]
    {2020{\it Mathematics Subject Classification}\/. 
    Primary: 35B65; Secondary: 35Q92, 35A09, 92C17.
    }
\footnote[0]
    {{\it Key words and phrases}\/: 
    chemotaxis; flux limitation; classical solutions; measure-valued initial data.}
\begin{center}
    \Large{{\bf Instantaneous regularization of measure-valued population densities in a Keller--Segel system\\ with flux limitation}}
\end{center}
\vspace{5pt}
\begin{center}
    Shohei Kohatsu
   \footnote[0]{
    E-mail: 
    {\tt sho.kht.jp@gmail.com}
    }\\
    \vspace{12pt}
    Department of Mathematics, 
    Tokyo University of Science\\
    1-3, Kagurazaka, Shinjuku-ku, 
    Tokyo 162-8601, Japan\\
    \vspace{2pt}
\end{center}
\begin{center}    
    \small \today
\end{center}

\vspace{2pt}
\newenvironment{summary}
{\vspace{.5\baselineskip}\begin{list}{}{%
     \setlength{\baselineskip}{0.85\baselineskip}
     \setlength{\topsep}{0pt}
     \setlength{\leftmargin}{12mm}
     \setlength{\rightmargin}{12mm}
     \setlength{\listparindent}{0mm}
     \setlength{\itemindent}{\listparindent}
     \setlength{\parsep}{0pt}
     \item\relax}}{\end{list}\vspace{.5\baselineskip}}
\begin{summary}
{\footnotesize {\bf Abstract.}
This paper is concerned with the Keller--Segel system with flux limitation,
 \begin{align}
   \begin{cases}
    u_t=\Delta u - \nabla \cdot (uf(|\nabla v|^{2})\nabla v),
  \\
    v_t=\Delta v - v + u
   \end{cases} \tag{$\ast$}
 \end{align}
in bounded $n$-dimensional domains
with homogeneous Neumann boundary conditions,
where $f$ generalizes the prototype obtained on letting
\[
    f(\xi) = k_f(1 + \xi)^{-\alpha}, \quad \xi \ge 0,
\]
with $k_f > 0$ and $\alpha > 0$.
In this framework, it is shown that
if either $n = 1$ and $\alpha > 0$ is arbitrary,
or $n \ge 2$ and $\alpha > \frac{n-2}{2(n-1)}$,
then for any nonnegative initial data
belonging to the space of Radon measures for the population density
and to $W^{1,q}$ with $q \in (\max\{1, (1-2\alpha)n\}, \frac{n}{n-1})$
for the signal density,
there exists a global classical solution of
the Neumann problem for $(\ast)$,
which is continuous at $t = 0$ in an appropriate sense.
} 
\end{summary}
\vspace{10pt}

\newpage
\section{Introduction}\label{Sec:Intro}

It is well-known that in many chemotaxis systems,
suitable regularity assumptions on initial data
may lead to global existence of classical solutions,
or blow-up of smooth solutions.
For example,
%
the chemotaxis system
introduced by Keller and Segel~(\cite{KS-1970, KS-1971}),
\begin{align}\label{KS}
  \begin{cases}
    u_t=\Delta u - \nabla \cdot (u \nabla v),
    & x \in \Omega, \  t > 0,
  \\
    v_t = \Delta v - v + u,
    & x \in \Omega, \ t > 0
  \end{cases}
\end{align}
in a smoothly bounded domain $\Omega \subset \R^n$ $(n \in \N)$,
admits a global bounded classical solution
for any nonnegative initial data
$u(\cdot, 0) = u_0 \in C^0(\Ombar)$ and
$v(\cdot, 0) = v_0 \in
\bigcup_{\sigma > 1} W^{1,\sigma}(\Omega)$
if $n = 1$
(\cite{HW-2005}),
for $u_0, v_0 \in
\bigcup_{\gamma \in (0,1]} W^{1+ \gamma, 2}(\Omega)$
satisfying $\io u_0 < 4 \pi$
if $n = 2$
(\cite{NSY-1997}),
and the same result holds
in two- or higher-dimensional settings, provided that
$u_0 \in C^0(\Ombar)$,
$v_0 \in \bigcup_{\sigma > n}W^{1,\sigma}(\Omega)$
and $\nrm{u_0}{L^{\frac{n}{2}}} + \nrm{\nabla v_0}{L^n}$ is sufficiently small
(\cite{C-2015}).
On the other hand,
when $\Omega$ is a ball in $\R^n$ $(n \ge 3)$,
then there are radial symmetric
positive initial data $u_0 \in C^0(\Ombar)$ and
$v_0 \in W^{1, \infty}(\Omega)$
such that the corresponding solution to \eqref{KS} blows up in
finite time
(\cite{W-2013}).
Similar results regarding global existence and finite-time blow-up in
the parabolic--elliptic counterpart of \eqref{KS} can be found in
\cite{JL-1992, N-2001, S-2005}, for instance.

Nevertheless, weaker assumptions on initial data still allow us
to find solutions of \eqref{KS}.
In the one-dimensional setting, for any nonnegative initial data
$u_0 \in L^2(\Omega)$ and $v_0 \in W^{1,2}(\Omega)$
the system \eqref{KS} has a unique global bounded solution
which is continuous with values in said spaces
(\cite{OY-2001}).
Concerning local solvability, even more rough initial data can be chosen,
for instance, Biler~\cite{B-1998} considered the case when $u_0$ is a finite
signed measure on $\Omega \subset \R^2$,
and obtained measure-valued solutions which are continuous at $t = 0$.
The case with measure-valued initial data was also considered for
the parabolic--elliptic system variant
(e.g.\ \cite{SS-2002, LSV-2012}),
and also when $\Omega = \R^2$
(e.g.\ \cite{R-2009, BM-2014, BZ-2015}).
We note that these papers did not consider classical solutions
but more generalized solutions.

Focusing on global solvability in such systems,
we know that the
initial datum $u_0$ for the population density does not have to be continuous
(e.g.\ \cite{OY-2001}).
In general, the continuity of initial data is used to construct local solutions
that solve systems in the classical sense,
and in deriving {\it a priori} estimates for the solutions
they only require that the initial data belong to some Lebesgue spaces 
(e.g.\ \cite{HW-2005, C-2007, GST-2016, L-2023}).
Then the following question arises:
\[
\mbox{\emph{How far the regularity for initial data can be relaxed in finding
global smooth solutions?}}
\]
For the system \eqref{KS} with $n = 1$, this question had been answered by
Winkler~\cite{W-2019},
where they showed existence of the functions
$u, v \in C^{\infty}(\Ombar \times (0, \infty))$,
which solves the Neumann problem for \eqref{KS} in the classical sense,
even when the pair of initial data $(\mu_0, v_0)$ belongs to
$\Radon \times L^2(\Omega)$,
and moreover, the functions $u, v$ are continuous at $t = 0$ in the sense that
\[
    u(\cdot, t) \wsc \mu_0
    \quad\mbox{in}\ \Radon
    \quad\mbox{and}\quad
    v(\cdot, t) \to v_0
    \quad\mbox{in}\ L^2(\Omega)
\]
as $t \dwto 0$.
Here, $\mathcal{M}(\Ombar)$ denotes the space of Radon measures on $\Ombar$,
and throughout the paper we use the identification
$\mathcal{M}(\Ombar) \simeq (C^0(\Ombar))^{*}$ in writing
$\mu(\psi) = \int_{\Ombar} \psi \, \intd{\mu}$ for arbitrary
$\mu \in \Radon$ and $\psi \in C^0(\Ombar)$,
according to the Riesz representation theorem
(see \cite[Theorem 4.31]{B-2011}).
This immediate smoothing was also studied
in the two-dimensional parabolic--elliptic setting by Heihoff~\cite{H-2023},
where they obtained the global smooth solutions with initial data
which belong to $\Radon$ and satisfy a smallness condition.
Similar results for other chemotaxis systems
can be found in \cite{WWX-2021, L-2021, H-2023P}.

We note that existence of classical solutions with measure-valued initial data
has also been considered in semilinear heat equation (\cite{T-2016}, see also
\cite[pp.\ 89--90]{QS-2019}).

Motivated by the previous works,
the purpose of the present paper now consists in studying how the nonlinear sensitivity
in the chemotactic term will affect this immediate regularization.
Specifically, we consider the
Keller--Segel system with flux limitation,
\begin{align}\label{Sys:Main}
  \begin{cases}
    u_t=\Delta u - \nabla \cdot (u f(|\nabla v|^2) \nabla v)
    & \mbox{in}\ \Omega \times (0, \infty),
  \\
    v_t = \Delta v - v + u
    & \mbox{in}\ \Omega \times (0, \infty),
  \\
    \nabla u \cdot \nu=\nabla v \cdot \nu=0
    & \mbox{on}\ \pa\Omega \times (0, \infty),
  \\
    u(\cdot ,0) = \mu_0, \ v(\cdot, 0) = v_0
    & \mbox{in}\ \Omega
  \end{cases}
\end{align}
in a smoothly bounded domain $\Omega \subset \R^n$ $(n \in \N)$,
where
\begin{equation}\label{Ass:f}
    f \in C^2([0, \infty)),
\end{equation}
where $\nu$ is the outward normal vector to $\pa\Omega$,
and where $\mu_0, v_0$ are given nonnegative functions.

It was established in \cite[Proposition 1.2]{W-2022}
that global classical solutions of the system \eqref{Sys:Main}
with initial data $\mu_0 \in C^0(\Ombar)$ and
$v_0 \in \bigcup_{r > n} W^{1,r}(\Omega)$ exist,
under the condition that the function $f$ satisfies
\begin{align}\label{Ass:f:Main}
    f(\xi) \le k_f (1 + \xi)^{-\alpha}\quad\mbox{for all}\ \xi \ge 0
\end{align}
with some $k_f > 0$ and $\alpha \in \R$ fulfilling
\[
  \begin{cases}
        \alpha \in \R
        & \mbox{if}\ n = 1,
\\ 
        \alpha > \dfrac{n-2}{2(n-1)}
        &\mbox{if}\ n \ge 2.
  \end{cases}        
\]
On the other hand, when $\Omega$ is a ball in $\R^n$ $(n\ge 3)$,
it was shown in \cite{W-2022b} that
there exists a suitable initial data $\mu_0 \in C^0(\Ombar)$ such that
the corresponding smooth solution blows up in finite time,
provided that $f$ fulfills
\[
    f(\xi) \ge k_f (1 + \xi)^{-\alpha}\quad\mbox{for all}\ \xi \ge 0
\]
with some $k_f > 0$ and $\alpha \in (0, \frac{n-2}{2(n-1)})$.
Similar results regarding global existence and finite-time blow-up
in variants of \eqref{Sys:Main} were obtained in \cite{NT-2018, T-2022},
and also the system \eqref{Sys:Main} with logistic source
was considered in \cite{Z-2023, MVY-2023}.
However,
in all these results initial data were assumed to be continuous,
and it can be expected that the regularity requirements for initial data
should be relaxed, as in the system \eqref{KS}.

Apart from an improvement over \cite{W-2022} regarding the regularity of
solutions, we shall weaken the requirements on initial data and obtain a global
classical solution of \eqref{Sys:Main}.

Our main result reads as follows.

%
%
\begin{thm}\label{Thm:Main}
Let $n \in \N$, and
let $f$ satisfy \eqref{Ass:f} as well as \eqref{Ass:f:Main}
with some $k_f > 0$ and $\alpha > 0$ fulfilling
\[
  \begin{cases}
        \alpha > 0
        & \mbox{if}\ n = 1,
\\ 
        \alpha > \dfrac{n-2}{2(n-1)}
        &\mbox{if}\ n \ge 2.
  \end{cases}        
\]
Suppose that $\mu_0$ and $v_0$ satisfy
\begin{align}\label{Ass:ini}
    \begin{cases}
    \mu_0 \in \Radon, \quad \mu_0 \ge 0 \ \mbox{in}\ \Ombar
    \quad\mbox{and}\quad \mu_0 \neq 0,
  \\
    v_0 \in W^{1,q}(\Omega), \quad v_0 \ge 0 \ \mbox{in}\ \Omega
  \end{cases}
\end{align}
with
\begin{align}\label{Ass:q}
  \begin{cases}
        q \in (\max\{1, (1-2\alpha)n\}, \infty)
        & \mbox{if}\ n = 1,
\\ 
        q \in \left(\max\{1, (1-2\alpha)n\}, \dfrac{n}{n-1}\right)
        &\mbox{if}\ n \ge 2.
  \end{cases}      
\end{align}
Then there exists a pair of nonnegative functions
\begin{align}\label{Result:reg}
    (u,v) \in \big( C^{\infty}(\Ombar \times (0, \infty)) \big)^2
\end{align}
such that $(u, v)$ solves
the boundary value problem in the system \eqref{Sys:Main}
in the classical sense in
$\Ombar \times (0,\infty)$,
that satisfies
\begin{equation}\label{Result:mass}
    \io u(\cdot, t) = m := \mu_0(\bmass)
    \quad\mbox{for all}\ t > 0
\end{equation}
with $\bmass(x) := 1$ for $x \in \Ombar$,
and that
\begin{alignat}{2}
    &u(\cdot, t) \wsc \mu_0
    \quad\mbox{in}\ \Radon\quad& &\mbox{as}\ t \dwto 0\label{Result:u}
    \\
\intertext{as well as}
    &v(\cdot, t) \to v_0
    \quad\mbox{in}\ W^{1,q}(\Omega)\quad& &\mbox{as}\ t\dwto 0.\label{Result:v}
\end{alignat}
\end{thm}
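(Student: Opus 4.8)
The overall strategy is a regularization–and–compactness argument. First I would approximate the measure-valued datum $\mu_0$ by smooth positive functions $\uie$, and the signal datum $v_0$ by smooth positive functions $\vie$, in such a way that $\uie \wsc \mu_0$ in $\Radon$ with $\io \uie = m$ for all $\ep \in (0,1)$, and $\vie \to v_0$ in $W^{1,q}(\Omega)$, with uniform bounds $\|\uie\|_{L^1(\Omega)} \le C$ and $\|\vie\|_{W^{1,q}(\Omega)} \le C$. One should also retain some control of the type $\|\uie\|_{(W^{\kappa,\sigma}(\Omega))^\star} \le C$ encoding the weak-$\star$ convergence quantitatively (for suitable $\kappa>0$, $\sigma$ large). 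For each such $\ep$, the initial data $(\uie,\vie)$ are continuous (indeed smooth), so by \cite[Proposition 1.2]{W-2022} — available here since $f$ satisfies \eqref{Ass:f} and \eqref{Ass:f:Main} with the required range of $\alpha$ — there exists a global classical solution $(\ue,\ve)$ of \eqref{Sys:Main} with these data, satisfying $\io \ue(\cdot,t) = m$ for all $t>0$.

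The heart of the matter is to derive $\ep$-independent estimates that are strong enough to pass to the limit and that also capture the behavior as $t\dwto 0$. The mass identity gives the uniform $L^1$ bound on $\ue$ for free. For $\ve$, testing the second equation and using parabolic smoothing for $-\Delta+1$ together with the $L^1$-bound on $\ue$ should give, for each $t>0$, bounds on $\|\ve(\cdot,t)\|_{W^{1,r}(\Omega)}$ for $r$ below some critical exponent tied to $n$, and more importantly a bound of the form $\|\ve(\cdot,t)\|_{W^{1,q}(\Omega)} \le \|\vie\|_{W^{1,q}(\Omega)} + o(1)$ as $t\dwto 0$, uniformly in $\ep$; the constraint $q < \frac{n}{n-1}$ for $n\ge2$ is exactly what makes the $L^1\to W^{1,q}$ smoothing of the heat semigroup work. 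The flux-limitation hypothesis \eqref{Ass:f:Main} now enters decisively: since $|uf(|\nabla v|^2)\nabla v| \le k_f\, u\,(1+|\nabla v|^2)^{-\alpha}|\nabla v| \le k_f\, u\,(1+|\nabla v|^2)^{\frac{1-2\alpha}{2}}$, the chemotactic flux is controlled by $u$ times a sublinear (when $\alpha>\tfrac12$) or mildly superlinear power of $|\nabla v|$, and the lower bound $q > (1-2\alpha)n$ is precisely the threshold guaranteeing that this flux lies in $L^1_{\mathrm{loc}}((0,\infty); L^1(\Omega))$ with $\ep$-uniform bounds once the $W^{1,q}$-control of $\ve$ is in hand. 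From here a bootstrap — alternating between $L^p$-estimates for $\ue$ via a Moser-type iteration or semigroup estimates, and Schauder/parabolic regularity for $\ve$ — upgrades these to $\ep$-uniform bounds in $C^{2+\theta,1+\theta/2}_{\mathrm{loc}}(\Ombar\times(0,\infty))$ for both components, for some $\theta\in(0,1)$.

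With such local-in-time-away-from-zero uniform Hölder bounds, Arzelà–Ascoli and a diagonal extraction along a sequence $\ep_j\dwto0$ yield limit functions $(u,v)\in (C^{2+\theta,1+\theta/2}_{\mathrm{loc}}(\Ombar\times(0,\infty)))^2$ solving \eqref{Sys:Main} classically on $\Ombar\times(0,\infty)$; the interior smoothness \eqref{Result:reg} then follows by standard parabolic bootstrap since the coefficients are smooth in $(u,\nabla v)$, and \eqref{Result:mass} is inherited from the $\ep$-level identity. The remaining — and genuinely delicate — point is the continuity at $t=0$, i.e.\ \eqref{Result:u} and \eqref{Result:v}. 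For \eqref{Result:v} one wants $v(\cdot,t)\to v_0$ in $W^{1,q}$; the plan is to combine the $\ep$-uniform estimate $\|\ve(\cdot,t)-\vie\|_{W^{1,q}(\Omega)} \le \eta(t)$ with $\eta(t)\to0$ as $t\dwto0$ (obtained from the mild formulation, the $L^1$-bound on $\ue$, and the flux bound above) with $\vie\to v_0$ in $W^{1,q}$, then let $\ep_j\dwto0$ first and $t\dwto0$ afterward. For \eqref{Result:u} one tests the first equation against a fixed $\psi\in C^0(\Ombar)$ — better, against $\psi\in C^1(\Ombar)$ or $C^2$ by density — and estimates $\big|\io \ue(\cdot,t)\psi - \io \uie\psi\big|$ by $\int_0^t\!\io \big(|\nabla\ue||\nabla\psi| + \ue f(|\nabla\ve|^2)|\nabla\ve||\nabla\psi|\big)$; controlling $\int_0^t\!\io|\nabla\ue|$ uniformly in $\ep$ as $t\dwto0$ is the subtle part, and here one likely uses an $\ep$-uniform bound on $\ue$ in $L^\infty((0,T);(W^{\kappa,\sigma})^\star)$ propagated from the datum together with a smoothing estimate, or alternatively an $L\log L$-type / entropy estimate, to show $\io\ue(\cdot,t)\psi \to \mu_0(\psi)$.

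\medskip

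\noindent\textbf{Main obstacle.} The principal difficulty is getting the $\ep$-uniform $W^{1,q}$-bound on $\ve$ (and the companion flux bound) to hold \emph{up to $t=0$}, not merely locally away from zero: because $\mu_0$ is only a measure, $\ue(\cdot,t)$ has no $\ep$-uniform $L^p$-bound for $p>1$ near $t=0$, so the chemotactic flux $\ue f(|\nabla\ve|^2)\nabla\ve$ must be tamed using \emph{only} the $L^1$-mass of $\ue$ plus the flux-limitation structure — which is exactly why the exponent window $(\max\{1,(1-2\alpha)n\},\frac{n}{n-1})$ for $q$ is nonempty precisely under $\alpha>\frac{n-2}{2(n-1)}$, and verifying non-emptiness of this window and that the semigroup estimates close within it is the crux. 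A secondary technical hurdle is making the two limits ($\ep_j\dwto0$ and $t\dwto0$) commute in the initial-trace statements, which is handled by the uniform modulus-of-continuity estimates at the $\ep$-level described above.
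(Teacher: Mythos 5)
Your overall framework — approximate the data, derive $\ep$-uniform estimates, extract a limit by compactness, then prove continuity at $t=0$ — is the same as the paper's, and your identification of the flux-limitation threshold $q>(1-2\alpha)n$ and the heat-semigroup window $q<\frac{n}{n-1}$ as the crux is accurate. Your route to approximate solutions differs somewhat from the paper's: you invoke \cite[Proposition~1.2]{W-2022} directly on the original system with smooth data $(\uie,\vie)$, while the paper instead regularizes the coupling in the second equation to $\frac{\ue}{1+\ep\ue}$ (so local-in-time existence is elementary) and the specific choice $\vie=e^{\ep(\Delta-1)}v_0$ is later exploited in the proof of \eqref{Result:v} via semigroup commutation. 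Both existence routes are legitimate, though the paper's is more self-contained and its particular $\vie$ makes the $W^{1,q}$-continuity argument structurally cleaner.

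However, your sketch of the proof of \eqref{Result:u} has a genuine gap. After testing the first equation against $\psi$ you propose to bound
\[
\Big|\io \ue(\cdot,t)\psi - \io \uie\psi\Big| \;\le\; \int_0^t\!\io \big(|\nabla\ue||\nabla\psi| + \ue f(|\nabla\ve|^2)|\nabla\ve||\nabla\psi|\big),
\]
and you correctly flag that controlling $\int_0^t\io|\nabla\ue|$ uniformly in $\ep$ near $t=0$ is problematic, proposing to handle it with a dual-Sobolev propagation or an $L\log L$ entropy estimate — neither of which is developed nor obviously closes, given that near $t=0$ no $\ep$-uniform bound on $\ue$ beyond $L^1$ is available. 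The paper avoids this quantity entirely: it uses test functions drawn from the set $X=\{\psi\in C^2(\Ombar): \nabla\psi\cdot\nu=0 \text{ on } \partial\Omega\}$, which is dense in $C^0(\Ombar)$, and integrates the diffusion term by parts \emph{twice}, giving $\int_0^t\io\ue\,\Delta\psi$. This is bounded simply by $m\,\|\Delta\psi\|_{L^\infty}\,t\to 0$ via mass conservation, so only the chemotactic term needs the nontrivial uniform-in-$\ep$ integrability, namely $\int_0^t\|\ue(\cdot,\sigma)|\nabla\ve(\cdot,\sigma)|^{1-2\alpha}\|_{L^1}\,\intd{\sigma}\lesssim t^{1-\frac{n}{2}(1-\frac1r)}$ as in Lemma~\ref{Lem:taxisL1}; a final $\ep$-versus-$\delta$ argument then handles passing from $\psi\in X$ to general $\varphi\in C^0(\Ombar)$. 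You should replace the single integration by parts (and the resulting $\nabla\ue$ term) with this double integration by parts against Neumann-admissible $C^2$ test functions, and supply the quantitative uniform-in-$\ep$, power-of-$t$ bound on $\nrm{\ue}{L^r}$ near $t=0$ (the paper's Lemmas~\ref{Lem:param-0}--\ref{Lem:ueLr-0}) that underlies the integrability of the chemotactic flux — this fixed-point argument for $R_\ep(T):=\sup_{\sigma}\sigma^{\frac n2(1-\frac1r)}\nrm{\ue(\cdot,\sigma)}{L^r}$ is the quantitative core your sketch glosses over.
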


Our approach for constructing global smooth solutions of \eqref{Sys:Main},
as in previous studies,
is to firstly approximate by smooth solutions $(\ue, \ve)$
which solve the regularized problem \eqref{Sys:Reg}
with smooth initial data $\uie, \vie$
(Section~\ref{Sec:Inside}),
and to secondly establish continuity of
the constructed solutions at $t = 0$ regarding the topology claimed in
Theorem~\ref{Thm:Main}
(Section~\ref{Sec:t0}).

In the first step, we will derive an $L^{\infty}$-estimate for $\ue(\cdot, t)$
on $(\tau, T)$ with $0 < \tau < T$,
which is independent of $\ep$
(Lemma~\ref{Lem:ueLinfty}),
whence the standard parabolic regularity theory from \cite{LSU-1968, L-1987}
asserts suitable H\"{o}lder bounds for $\ue$ and $\ve$
(Lemmas~\ref{Lem:veHol2} and \ref{Lem:ueHol2}),
which allow us to construct such solution candidates of \eqref{Sys:Main}
along a suitable sequence
via compact embedding property of H\"{o}lder spaces
(Lemma~\ref{Lem:uvC21}).

In order to prove continuity \eqref{Result:u}, it is sufficient to show that
for any $\varphi \in C^0(\Ombar)$
the function $t \mapsto |\io u(\cdot, t) \varphi - \mu_0 (\varphi)|$ tends to $0$
%
%
as $t \dwto 0$
(Lemma~\ref{Lem:Limit_u}),
and this will be achieved on the basis of uniform continuity of $\ue$
in the sense of the weak-*-topology of $\Radon$
(Lemma~\ref{Lem:ueuie}).
To this end, we shall establish the uniform time integrability of
$\io \ue(\cdot, t)|\nabla \ve(\cdot, t)|^{1-2\alpha}$ near $t = 0$
(Lemma~\ref{Lem:taxisL1}),
which then gives us a similar integrability property for
$\io (\ue)_t(\cdot, t) \varphi$ in Lemma~\ref{Lem:ueuie}.
The proof of \eqref{Result:v} in Lemma~\ref{Lem:Limit_v}
relies on semigroup based approaches as in \cite{H-2023P}.

The key to the proofs of all two of the above steps
lies in establishing the uniform time integrability of $\ue(\cdot, t)$
in $L^r(\Omega)$ with some $r > 1$ near $t = 0$
(Lemma~\ref{Lem:ueLr-0}).
Due to the quite low regularity of $u_0$,
we cannot obtain uniform bounds for $\nrm{\uie}{L^r}$ for any $r > 1$,
but still we have an $\ep$-independent bound for $\nrm{\uie}{L^1}$
(see \eqref{uie:mass}),
and thus we use semigroup methods instead of testing procedures.
The most important estimate in the proof will be an $L^s$-estimate for
the function
$t\mapsto \ue(\cdot, t)|\nabla \ve(\cdot, t)|^{1-2\alpha}$,
arising from the flux limitation of the form in \eqref{Ass:f:Main},
on $(0, T)$ with $T > 0$ and a suitable choice of $s > 1$
as in Lemma~\ref{Lem:param-0},
where the largeness assumption on $\alpha$ from Theorem~\ref{Thm:Main} is required.

\section{Preliminaries}\label{Sec:Pre}

In this section we collect some $L^p$-$L^q$ estimates
for the Neumann heat semigroup and inhomogeneous linear heat equations.
Throughout the sequel, by $(e^{t\Delta})_{t \ge 0}$ we will denote
the Neumann heat semigroup in the domain $\Omega$.
The following statement in this regard extracts from
\cite[Lemma 2.1]{IY-2020} what will be needed here.
%
%
\begin{lem}\label{Lem:Inhom}
Let $T > 0$, and let $\Omega \subset \R^n$ $(n \in \N)$ be a bounded domain.
Consider the linear heat equation
\begin{align}\label{Sys:Inh}
  \begin{cases}
    z_t=\Delta z - z + w
    & \mbox{in}\ \Omega \times (0, T),
  \\
    \nabla z \cdot \nu = 0
    & \mbox{on}\ \pa\Omega \times (0, T),
  \\
    z(\cdot ,0) = z_0
    & \mbox{in}\ \Omega.
  \end{cases}
\end{align}
Let $1 \le q \le p \le \infty$ such that
$\frac{1}{q} - \frac{1}{p} < \frac{1}{n}$,
and suppose that $z_0 \in W^{1,p}(\Omega)$ as well as
$w \in L^{\infty}(0, T; L^q(\Omega))$.
Then there exists a unique solution $z \in C^0([0,T]; W^{1,p}(\Omega))$
to \eqref{Sys:Inh} given by
\[
    z(t) = e^{-t} e^{t \Delta} z_0
    + \int_0^t e^{-(t-\sigma)} e^{(t-\sigma) \Delta} w(\sigma)
      \, \intd{\sigma}
    \quad\mbox{for}\ t \in [0, T],
\]
%
and moreover,
\[
    \nrm{\nabla z(t)}{L^p}
    \le e^{-t} \nrm{\nabla z_0}{L^p}
    + \Gamma\left(\frac{1}{2}
    - \frac{n}{2}\left(\frac{1}{q} - \frac{1}{p}\right)\right)
    \nnorm{w}{L^{\infty}}{0,t}{L^q}
\]
for all $t \in [0, T]$, where $\Gamma$ is the Gamma function.
\end{lem}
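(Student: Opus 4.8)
The representation formula for $z$ is nothing but the variation-of-constants (Duhamel) formula for the generator $\Delta-1$ of the rescaled Neumann heat semigroup $(e^{-t}e^{t\Delta})_{t\ge 0}$; my plan is to adopt it as the definition of $z$, read off the gradient bound from it, and then check that it genuinely produces a solution in the claimed class. Differentiating,
\[
  \nabla z(t)=e^{-t}\,\nabla e^{t\Delta}z_0+\int_0^t e^{-(t-\sigma)}\,\nabla e^{(t-\sigma)\Delta}w(\sigma)\,\intd{\sigma},
\]
so everything reduces to estimating $\nabla e^{s\Delta}$ acting on $z_0\in W^{1,p}(\Omega)$ and on $w(\sigma)\in L^q(\Omega)$. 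For the first I would invoke the gradient non-expansivity of the Neumann heat semigroup, $\|\nabla e^{s\Delta}\varphi\|_{L^p(\Omega)}\le\|\nabla\varphi\|_{L^p(\Omega)}$, and for the second the smoothing estimate $\|\nabla e^{s\Delta}\psi\|_{L^p(\Omega)}\le s^{-\frac12-\frac n2(\frac1q-\frac1p)}\|\psi\|_{L^q(\Omega)}$; these two $L^p$--$L^q$ facts for $(e^{t\Delta})_{t\ge0}$ are the genuine analytic input, and they are exactly what \cite[Lemma 2.1]{IY-2020} supplies.

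Granting these, the first summand contributes $e^{-t}\nrm{\nabla z_0}{L^p}$. In the second, bounding $\|w(\sigma)\|_{L^q(\Omega)}\le\nnorm{w}{L^{\infty}}{0,t}{L^q}$ and substituting $\rho=t-\sigma$ leaves
\[
  \int_0^t e^{-\rho}\,\rho^{-\frac12-\frac n2(\frac1q-\frac1p)}\,\intd{\rho}\ \le\ \int_0^\infty e^{-\rho}\,\rho^{\,(\frac12-\frac n2(\frac1q-\frac1p))-1}\,\intd{\rho}\ =\ \Gamma\!\left(\tfrac12-\tfrac n2\left(\tfrac1q-\tfrac1p\right)\right),
\]
where the improper integral converges precisely because the assumption $\frac1q-\frac1p<\frac1n$ makes the exponent strictly larger than $-1$. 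Summing the two contributions yields the claimed inequality for $\nabla z(t)$ on $[0,T]$.

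For the remaining structural assertions I would argue as follows. Continuity $z\in C^0([0,T];W^{1,p}(\Omega))$: the term $e^{-t}e^{t\Delta}z_0$ is continuous into $W^{1,p}(\Omega)$ by strong continuity of $(e^{t\Delta})_{t\ge0}$, while continuity of the convolution term follows from dominated convergence, exploiting the very integrability of $\rho\mapsto\rho^{-\frac12-\frac n2(\frac1q-\frac1p)}$ near $\rho=0$ that produced the Gamma function; that $z$ then solves \eqref{Sys:Inh} is standard. Uniqueness follows from linearity: the difference of two solutions solves \eqref{Sys:Inh} with zero data, hence has $\nabla z\equiv 0$ by the estimate, so $z(\cdot,t)$ is spatially constant and, solving $z_t=-z$ from $z(0)=0$, vanishes.

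The one point demanding care is the \emph{sharp} form of the two semigroup estimates — constant $1$ both in the gradient non-expansivity bound and in the smoothing bound — since any larger constant would only give $C\,\Gamma(\cdot)$ rather than $\Gamma(\cdot)$ in the conclusion, and it is here that the geometry of $\Omega$ enters. Once that quantitative content is borrowed from \cite{IY-2020}, the rest is routine Duhamel bookkeeping, and indeed the whole lemma is a specialization of that reference tailored to what is needed below.
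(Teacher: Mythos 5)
The paper does not actually prove this lemma: as its surrounding text states, it is imported verbatim from \cite[Lemma 2.1]{IY-2020}, so there is no in-paper argument to compare your attempt against. Your reconstruction is the natural one and is almost certainly the argument carried out in the cited reference: adopt the Duhamel formula as the definition of $z$, split $\nabla z(t)$ into the $z_0$-contribution (gradient non-expansivity of $e^{t\Delta}$) and the $w$-contribution ($L^q\to L^p$ gradient smoothing), and after the substitution $\rho = t-\sigma$ read off a truncated Gamma integral, convergent precisely because $\frac{1}{q}-\frac{1}{p}<\frac{1}{n}$ forces the exponent above $-1$. The continuity, solvability and uniqueness remarks are routine.

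You rightly single out the delicate point: for the clean prefactor $\Gamma\bigl(\frac12-\frac{n}{2}(\frac1q-\frac1p)\bigr)$ to appear, rather than $C\,\Gamma(\cdot)$, both semigroup bounds must hold with constant exactly one. That is genuinely non-trivial for the Neumann heat semigroup on a bounded domain; indeed the paper's own Lemma~\ref{Lem:Semi} records the analogous estimates only with an $\Omega$-dependent constant $C>0$, which would spoil the clean Gamma constant. Your proposal therefore correctly outlines the structure of the argument, but — as you concede — defers its quantitative core, the sharp semigroup constants, to \cite{IY-2020}. That is exactly what the paper does as well, by citation rather than proof, so there is no genuine mismatch; just be aware that the sharp non-expansivity and smoothing bounds you invoke are substantive facts to be verified against the reference, not consequences of the $L^p$--$L^q$ estimates available elsewhere in this paper.
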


Let us also recall some estimates as given in
\cite[Lemma 1.3]{W-2010} and \cite[Lemma 2.1]{C-2015},
which will be used in several places.
We henceforth letting $\lambda_1 > 0$ denote
the first nonzero eigenvalue of $- \Delta$ in $\Omega$
under the Neumann boundary condition.
%
%
\begin{lem}\label{Lem:Semi}
Let $\Omega \subset \R^n$ $(n\in\N)$ be a smoothly bounded domain.
Then there exists a constant $C > 0$ which depends only on $\Omega$
and which has the following properties\/{\rm :}
\begin{enumerate}[{\rm (i)}]
\item\label{Semi-1} If $1 \le q \le p \le \infty$, then
\[
    \nrm{e^{t \Delta} w}{L^p}
    \le C(1 + t^{-\frac{n}{2}(\frac{1}{q} - \frac{1}{p})}) e^{-\lambda_1 t}
    \nrm{w}{L^q}
    \quad\mbox{for all}\ t > 0
\]
holds for all $w \in L^q(\Omega)$ with $\io w = 0$.
\item\label{Semi-2} If $1\le q \le p\le \infty$, then
\[
    \nrm{\nabla e^{t\Delta}w}{L^p}
      \le C(1 + t^{-\frac{1}{2}-\frac{n}{2}(\frac{1}{q}-\frac{1}{p})})
      e^{-\lambda_1 t} \nrm{w}{L^q}
    \quad\mbox{for all}\ t > 0
\]
holds for any $w \in L^q(\Omega)$.
\item\label{Semi-3} If $1 < q \le p < \infty$ or $1 < q < p = \infty$, then
\[
    \nrm{e^{t\Delta}\nabla\cdot w}{L^p}
      \le C(1 + t^{-\frac{1}{2}-\frac{n}{2}(\frac{1}{q}-\frac{1}{p})})
      e^{-\lambda_1 t} \nrm{w}{L^q}
    \quad\mbox{for all}\ t > 0
\]
is valid for all $w \in (L^q(\Omega))^n$,
where $e^{t\Delta}\nabla \cdot$ is the extension of
the corresponding operator on $(C_{\rm c}^{\infty}(\Omega))^n$ to
a continuous operator from $(L^q(\Omega))^n$ to $L^p(\Omega)$.
\end{enumerate}
\end{lem}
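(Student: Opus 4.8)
The plan is to derive all three inequalities from two standard ingredients for the smoothly bounded domain $\Omega$. The first is the pair of Gaussian pointwise bounds for the Neumann heat kernel $G$ and its spatial gradient, namely
\[
    0 \le G(x,y,t) \le c_1 t^{-\frac{n}{2}} e^{-\frac{|x-y|^2}{c_1 t}},
    \qquad
    |\nabla_x G(x,y,t)| \le c_1 t^{-\frac{n+1}{2}} e^{-\frac{|x-y|^2}{c_1 t}}
    \qquad (0 < t \le 1)
\]
with some $c_1 > 0$, together with the normalization $\io G(x,y,t)\,\intd{y} = 1$; these are classical for domains with smooth boundary. The second is the Neumann spectral gap: $-\Delta$ restricted to $\{w \in L^2(\Omega) : \io w = 0\}$ has spectrum in $[\lambda_1, \infty)$, so that $\nrm{e^{t\Delta}w}{L^2} \le e^{-\lambda_1 t}\nrm{w}{L^2}$ whenever $\io w = 0$. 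I will also use, repeatedly, that the zero-mean subspace of $L^q(\Omega)$ is invariant under $(e^{t\Delta})_{t \ge 0}$ (mass is conserved) and that $e^{t\Delta}$ is a contraction on every $L^q(\Omega)$. The estimates are proved separately on $0 < t \le 1$, where the kernel bounds produce the singular prefactors, and on $t \ge 1$, where the spectral gap produces the decay $e^{-\lambda_1 t}$; the two regimes are glued together through a unit-time splitting of the semigroup.

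For part (i), on $0 < t \le 1$ I write $(e^{t\Delta}w)(x) = \io G(x,y,t)\,w(y)\,\intd{y}$ and estimate this integral operator against the kernel $c_1 t^{-n/2}e^{-|x-y|^2/(c_1 t)}$ by H\"older's and Young's inequalities, obtaining $\nrm{e^{t\Delta}w}{L^p} \le c_2 t^{-\frac{n}{2}(\frac1q - \frac1p)}\nrm{w}{L^q}$, which is of the asserted form on this range since $e^{-\lambda_1 t} \ge e^{-\lambda_1}$ (and the zero-mean hypothesis plays no role yet). On $t \ge 2$ I write $e^{t\Delta}w = e^{\Delta}\big(e^{(t-2)\Delta}(e^{\Delta}w)\big)$: the inner $e^{\Delta}$ maps $w$ into $L^2(\Omega)$ with $\io e^{\Delta}w = \io w = 0$ and $\nrm{e^{\Delta}w}{L^2} \le c_3\nrm{w}{L^q}$ by the previous bound at $t = 1$, the middle factor gains $e^{-\lambda_1(t-2)}$ by the spectral gap, and the outer $e^{\Delta}$ maps $L^2(\Omega)$ into $L^p(\Omega)$ boundedly (by short-time smoothing when $p > 2$, and by $L^2(\Omega) \embd L^p(\Omega)$ otherwise); the range $1 \le t \le 2$ is absorbed into the short-time bound at the cost of a constant. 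Combining the two regimes gives (i). For part (ii) I first note $\nabla e^{t\Delta}w = \nabla e^{t\Delta}(w - \overline{w})$ with $\overline{w} := \ave{w}{\Omega}$, since the semigroup preserves constants and the gradient annihilates them, so I may assume $\io w = 0$. On $0 < t \le 1$ the gradient kernel bound gives, in the same way, $\nrm{\nabla e^{t\Delta}w}{L^p} \le c_4 t^{-\frac12 - \frac{n}{2}(\frac1q - \frac1p)}\nrm{w}{L^q}$; on $t \ge 1$ I write $\nabla e^{t\Delta}w = \nabla e^{\Delta}\big(e^{(t-1)\Delta}w\big)$ and use the case $p = q$ of part (i) to get $\nrm{e^{(t-1)\Delta}w}{L^q} \le c_5 e^{-\lambda_1(t-1)}\nrm{w}{L^q}$, followed by the gradient bound at $t = 1$. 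Combining again yields (ii).

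For part (iii) I argue by duality. For $w \in (C_{\rm c}^{\infty}(\Omega))^n$ and $\varphi \in C^{\infty}(\Ombar)$, self-adjointness of $e^{t\Delta}$ and integration by parts (with no boundary term, $w$ being compactly supported) give
\[
    \io (e^{t\Delta}\nabla\cdot w)\,\varphi
    = \io (\nabla\cdot w)\, e^{t\Delta}\varphi
    = -\io w \cdot \nabla e^{t\Delta}\varphi,
\]
whence $\big|\io (e^{t\Delta}\nabla\cdot w)\,\varphi\big| \le \nrm{w}{L^q}\,\nrm{\nabla e^{t\Delta}\varphi}{L^{q'}}$. If $1 < q \le p < \infty$ then $1 < p' \le q' < \infty$, and if $1 < q < p = \infty$ then $1 = p' < q' < \infty$; in both cases part (ii), applied with $(q,p)$ there taken to be $(p', q')$, and using $\frac{1}{p'} - \frac{1}{q'} = \frac1q - \frac1p$, bounds $\nrm{\nabla e^{t\Delta}\varphi}{L^{q'}}$ by $C(1 + t^{-\frac12 - \frac{n}{2}(\frac1q - \frac1p)})e^{-\lambda_1 t}\nrm{\varphi}{L^{p'}}$. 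Taking the supremum over $\varphi$ with $\nrm{\varphi}{L^{p'}} \le 1$ — legitimate because $C^{\infty}(\Ombar)$ is dense in $L^{p'}(\Omega)$ (as $p' < \infty$) and $L^p(\Omega) = (L^{p'}(\Omega))^{*}$ — gives the asserted estimate for $e^{t\Delta}\nabla\cdot w$ on smooth $w$, and a density argument both extends it to $(L^q(\Omega))^n$ and supplies the definition of $e^{t\Delta}\nabla\cdot$ as a continuous operator there.

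The only genuinely nontrivial input in this scheme is the pair of Gaussian kernel bounds for the Neumann heat kernel on a smoothly bounded domain; once these are granted, everything else is bookkeeping. The single point that needs care is producing the singular prefactor and the factor $e^{-\lambda_1 t}$ simultaneously with one constant $C$ valid for all $t > 0$: neither the kernel bounds alone (which give the prefactor but no sharp decay) nor the $L^2$ spectral estimate alone (which gives the decay but no prefactor) suffices, and it is precisely the splitting $e^{t\Delta} = e^{\Delta}\,e^{(t-2)\Delta}\,e^{\Delta}$ (respectively $\nabla e^{t\Delta} = \nabla e^{\Delta}\,e^{(t-1)\Delta}$) on long time intervals that lets each regime borrow the other's strength.
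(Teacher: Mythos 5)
Your argument is correct, and it reconstructs the standard proof of these semigroup estimates: short-time Gaussian kernel bounds for the singular prefactor, the $L^2$ spectral gap (transferred to general exponents by splitting off unit-time factors) for the decay $e^{-\lambda_1 t}$, and duality via part (ii) for the divergence-form operator in part (iii). The paper itself gives no proof here --- it simply quotes the lemma from \cite[Lemma 1.3]{W-2010} and \cite[Lemma 2.1]{C-2015} --- and your proof is essentially the one found in those references, so there is nothing to compare beyond noting that your only unproved input, the Gaussian bounds for the Neumann heat kernel and its gradient on a smoothly bounded domain, is indeed the accepted classical starting point.
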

%
%
%
%
%

\section{Estimates for approximate solutions near $t = 0$}\label{Sec:Esti0}

In order to regularize the system \eqref{Sys:Main},
we begin by approximating initial data $\mu_0$ and $v_0$ given in \eqref{Ass:ini}
with \eqref{Ass:q}.
Let us first fix a family of functions
$(\uie)_{\ep \in (0,1)} \subset C^{\infty}(\Ombar)$ such that
$\uie \ge 0$ in $\Ombar$, that
\begin{align}
    &\nrm{\uie}{L^1} = \mu_0(\bmass) = m
      \quad\mbox{for all}\ \ep \in (0,1),\label{uie:mass}
      \\
      \intertext{and that}
    &\uie \wsc \mu_0\quad\mbox{in}\ \Radon\quad\mbox{as}\ \ep\dwto 0,
    \label{uie:Radon}
\end{align}
noting that such a family of functions can be found in
\cite[pp.\ 470--471]{B-2011}.
Following \cite{H-2023P},
we next introduce a family of functions
$(\vie)_{\ep \in (0,1)} \subset C^{\infty}(\Ombar)$
defined by
%
\begin{align}\label{Def:vie}
    \vie := e^{\ep (\Delta - 1)} v_0
      \quad\mbox{for}\ \ep\in (0,1).
\end{align}
According to the maximum principle,
$\vie \ge 0$ in $\Ombar$ for all $\ep\in (0,1)$.
Moreover, due to continuity of the semigroup at $t=0$,
we have
%
\begin{align}\label{Conv_vie}
    \vie \to v_0 \quad\mbox{in}\ W^{1,q}(\Omega)
      \quad\mbox{as}\ \ep \dwto 0.
\end{align}
We then consider the regularized problem
\begin{align}\label{Sys:Reg}
  \begin{cases}
    (\ue)_t=\Delta \ue - \nabla \cdot (\ue f(|\nabla \ve|^2) \nabla \ve)
    & \mbox{in}\ \Omega \times (0, \infty),
  \\
    (\ve)_t = \Delta \ve - \ve + \dfrac{\ue}{1 + \ep \ue}
    & \mbox{in}\ \Omega \times (0, \infty),
  \\
    \nabla \ue \cdot \nu=\nabla \ve \cdot \nu=0
    & \mbox{on}\ \pa\Omega \times (0, \infty),
  \\
    \ue(\cdot ,0) = \uie, \ \ve(\cdot, 0) = \vie
    & \mbox{in}\ \Omega,
  \end{cases}
\end{align}
where $\ep \in (0, 1)$.
The approximation used here immediately implies existence of
global approximate solutions.
%
%
\begin{lem}\label{Lem:App}
For any $\ep \in (0,1)$, there exist uniquely determined functions
\begin{align}\label{Reg_App}
    \begin{cases}
      \ue \in C^0(\Ombar \times [0, \infty))\cap C^{2,1}(\Ombar \times (0, \infty)),\\
      \ve \in C^0(\Ombar \times [0, \infty))
        \cap C^{2,1}(\Ombar \times (0, \infty)) \cap
        \bigcap_{r > n} L^{\infty}_{\rm loc}([0, \infty); W^{1,r}(\Omega))
    \end{cases}
\end{align}
such that $(\ue, \ve)$ is a global classical solution to \eqref{Sys:Reg}.
Moreover, $\ue \ge 0$ and $\ve \ge 0$ in $\Ombar \times [0, \infty)$,
and
\begin{align}\label{Mass_App}
    \io \ue(\cdot, t) = m \quad\mbox{for all}\ t > 0
\end{align}
with $m > 0$ as defined in \eqref{Result:mass}.
\end{lem}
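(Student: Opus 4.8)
The plan is to establish Lemma~\ref{Lem:App} by a fixed-point / continuation argument for the regularized system \eqref{Sys:Reg}, exploiting the fact that the second equation has been tamed by the factor $\frac{1}{1+\ep\ue}$, which makes $\frac{\ue}{1+\ep\ue}$ bounded by $\frac1\ep$ pointwise regardless of the size of $\ue$. First I would fix $\ep\in(0,1)$ and, for a suitable $T>0$, set up a Banach fixed-point scheme on a space like $X_T:=C^0(\Ombar\times[0,T])$ (or a bounded subset thereof): given $\util\in X_T$ with $\util\ge 0$, define $\ve$ as the solution of the linear problem $(\ve)_t=\Delta\ve-\ve+\frac{\util}{1+\ep\util}$ with Neumann data and $\ve(\cdot,0)=\vie$; since $\frac{\util}{1+\ep\util}\in L^\infty(\Omega\times(0,T))$ with norm $\le\frac1\ep$ and $\vie\in C^\infty(\Ombar)$, parabolic regularity (e.g.\ \cite{LSU-1968}) gives $\ve\in C^0(\Ombar\times[0,T])\cap C^{2,1}(\Ombar\times(0,T])$ with bounds on $\nabla\ve$ in, say, $C^{1+\gamma,\frac{1+\gamma}2}(\Ombar\times[\tau,T])$ for $0<\tau<T$, and in $L^\infty((0,T);W^{1,r})$ for every $r>n$ via the variation-of-constants formula and Lemma~\ref{Lem:Semi}(ii). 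Then, with this $\ve$ now a fixed coefficient, solve the first equation $(\ue)_t=\Delta\ue-\nabla\cdot(\ue f(|\nabla\ve|^2)\nabla\ve)$ with Neumann data and $\ue(\cdot,0)=\uie$; because $f$ is $C^2$ and $\nabla\ve$ enjoys the H\"older bounds just obtained, this is a linear uniformly parabolic equation with H\"older coefficients, so it has a unique solution $\ue\in C^0(\Ombar\times[0,T])\cap C^{2,1}(\Ombar\times(0,T])$, nonnegative by the maximum principle since $\uie\ge0$ and the equation is in divergence form with no zeroth-order sink. This defines the map $\util\mapsto\ue$, and I would show it is a contraction on $X_T$ for $T=T(\ep)$ small, using standard Schauder-type estimates together with the Lipschitz dependence of $\frac{s}{1+\ep s}$ and of $f(|\nabla\ve|^2)\nabla\ve$ on the data.

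Next I would upgrade the local solution to a global one. The natural continuation criterion for such chemotaxis-type systems is that $\Tmax<\infty$ forces $\limsup_{t\uparrow\Tmax}\nrm{\ue(\cdot,t)}{L^\infty}=\infty$ (see e.g.\ \cite{HW-2005, W-2022}). To rule this out, observe first that integrating the first equation over $\Omega$ and using the Neumann boundary conditions yields $\ddt\io\ue=0$, hence \eqref{Mass_App}: $\io\ue(\cdot,t)=\io\uie=m$ for all $t\in(0,\Tmax)$, by \eqref{uie:mass}. Then, since $\frac{\ue}{1+\ep\ue}\le\frac1\ep$ uniformly, the second equation together with Lemma~\ref{Lem:Inhom} (or directly Lemma~\ref{Lem:Semi}(ii)) bounds $\nabla\ve$ in $L^\infty((0,\Tmax);W^{1,r})$ for all $r>n$, and parabolic regularity then bounds $\nabla\ve$ in $L^\infty(\Omega\times(0,\Tmax))$ and indeed in H\"older spaces up to $t=\Tmax$. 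With $\nabla\ve$ and $f(|\nabla\ve|^2)$ thus controlled, a Moser-type iteration (or a semigroup argument via Lemma~\ref{Lem:Semi}(iii) applied to $\ue(t)=e^{t\Delta}\uie-\int_0^t e^{(t-\sigma)\Delta}\nabla\cdot(\ue f(|\nabla\ve|^2)\nabla\ve)(\sigma)\,\intd\sigma$, exactly as in the proof strategy sketched around Lemma~\ref{Lem:ueLinfty}) yields a bound for $\nrm{\ue(\cdot,t)}{L^\infty}$ on $(0,\Tmax)$ that stays finite as $t\uparrow\Tmax$, contradicting the blow-up criterion; hence $\Tmax=\infty$. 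Uniqueness on every time interval follows from the contraction property together with a standard Gronwall argument on differences of two solutions, using that $\frac{s}{1+\ep s}$ is globally Lipschitz and the coefficients are smooth.

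The regularity claimed in \eqref{Reg_App} is then obtained by bootstrapping: once $\ue,\ve\in C^{2,1}(\Ombar\times(0,\infty))$ with locally H\"older-continuous derivatives, Schauder theory applied iteratively to the two equations (treating each as a linear parabolic equation for one unknown with coefficients built from the other) gives the stated mixed regularity, in particular $\ve\in\bigcap_{r>n}L^\infty_{\rm loc}([0,\infty);W^{1,r}(\Omega))$ from Lemma~\ref{Lem:Inhom} with $z_0=\vie\in W^{1,r}(\Omega)$ and $w=\frac{\ue}{1+\ep\ue}\in L^\infty_{\rm loc}([0,\infty);L^\infty(\Omega))\subset L^\infty_{\rm loc}([0,\infty);L^r(\Omega))$, noting that the condition $\frac1q-\frac1p<\frac1n$ is satisfied with $q=p=r$. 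I expect the main obstacle to be the $\ep$-dependent global $L^\infty$ bound for $\ue$: although here we are free to let the bound depend on $\ep$ (the $\ep$-uniform version is the content of the later Lemma~\ref{Lem:ueLinfty} and is much harder), one still needs to close the estimate on the taxis term $\ue f(|\nabla\ve|^2)\nabla\ve$, and the cleanest route is to first pin down $\nabla\ve\in L^\infty$ using the boundedness of $\frac{\ue}{1+\ep\ue}$ and then run the Moser iteration or the semigroup fixed-point-type estimate for $\ue$ with this fixed drift.
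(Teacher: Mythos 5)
Your proposal is correct and follows essentially the same route as the paper: the paper's proof consists of a one-sentence appeal to ``a standard fixed point argument and regularity theory as in [W-2010L, Lemma~1.1]'' combined with the observation $\frac{\ue}{1+\ep\ue}\le\frac1\ep$, then infers nonnegativity from the maximum principle and mass conservation by integrating the first equation, exactly as you do. You simply unfold the fixed-point/continuation/bootstrap details that the cited reference supplies, so there is no difference in strategy, only in the level of explicitness.
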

\begin{proof}
A standard fixed point argument and regularity theory
as in \cite[Lemma 1.1]{W-2010L}),
together with the fact that
$\frac{\ue}{1 + \ep\ue} \le \frac{1}{\ep}$ for all $\ep \in (0,1)$,
provide existence and uniqueness of a global classical solution
$(\ue, \ve)$ to \eqref{Sys:Reg} that satisfies \eqref{Reg_App}.
Nonnegativity of $\ue$ and $\ve$ result from the maximum principle,
while the mass conservation property \eqref{Mass_App} follows by
integrating the first equation of \eqref{Sys:Reg}
over $\Omega$.
\end{proof}

We henceforth fix the unique classical solution $(\ue, \ve)$
to \eqref{Sys:Reg} for each $\ep \in (0,1)$.
A necessary first observation is uniform boundedness of
$\nabla \ve$ in $L^{\infty}(0, \infty; L^p(\Omega))$ with $p \in (0, q]$.
%
%
\begin{lem}\label{Lem:vexLp-0}
Let $p \in (0, q]$.
Then there exists a constant $C = C(|\Omega|, p, q, m, n) > 0$ such that
\begin{align}\label{vexLp-0}
    \norm{\nabla \ve}{L^p} \le C
      \quad\mbox{for all}\ t > 0\ \mbox{and}\ \ep \in (0,1).
\end{align}
\end{lem}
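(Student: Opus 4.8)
The plan is to reduce the statement to the endpoint exponent $p = q$ and then read the bound off from the smoothing estimate for the inhomogeneous heat equation recorded in Lemma~\ref{Lem:Inhom}, using nothing about $\ue$ beyond the mass identity \eqref{Mass_App}. Indeed, once $\norm{\nabla\ve}{L^q} \le C$ has been shown uniformly in $t > 0$ and $\ep \in (0,1)$, Hölder's inequality on the bounded domain $\Omega$ gives $\norm{\nabla\ve}{L^p} \le |\Omega|^{\frac1p - \frac1q}\norm{\nabla\ve}{L^q}$ for every $p \in (0,q)$, so it suffices to treat $p = q$.

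To this end I would apply Lemma~\ref{Lem:Inhom} to the second equation of \eqref{Sys:Reg}, regarded as the linear Neumann heat problem \eqref{Sys:Inh} with initial datum $\vie$ and source $\frac{\ue}{1+\ep\ue}$; by uniqueness the classical solution $\ve$ from Lemma~\ref{Lem:App} is precisely the solution that lemma provides. One uses the Lebesgue exponents $1$ and $q$ there (in the slots of its $q$ and $p$): this is admissible because $\vie \in C^{\infty}(\Ombar) \subset W^{1,q}(\Omega)$; because $\io \frac{\ue(\cdot,t)}{1+\ep\ue(\cdot,t)} \le \io \ue(\cdot,t) = m$ for all $t > 0$ by \eqref{Mass_App}, so that $\frac{\ue}{1+\ep\ue}$ belongs to $L^{\infty}(0,\infty;L^{1}(\Omega))$ with norm at most $m$; and because the required condition $1 - \frac1q < \frac1n$ holds — for $n \ge 2$ it reduces to $q < \frac{n}{n-1}$ from \eqref{Ass:q}, and for $n = 1$ it is automatic since $q > 1$. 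Consequently, for every $T > 0$, and hence for all $t > 0$ and $\ep \in (0,1)$,
\[
    \norm{\nabla\ve}{L^q}
    \le e^{-t}\,\nrm{\nabla\vie}{L^q}
    + \Gamma\!\left(\frac{1}{2} - \frac{n}{2}\left(1 - \frac{1}{q}\right)\right) m .
\]

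Finally, the right-hand side is bounded independently of $t$ and $\ep$: one has $e^{-t} \le 1$ and $\sup_{\ep\in(0,1)}\nrm{\nabla\vie}{L^q} < \infty$ because $\vie \to v_0$ in $W^{1,q}(\Omega)$ as $\ep \dwto 0$ by \eqref{Conv_vie}, and the argument $\frac12 - \frac n2(1 - \frac1q)$ of the Gamma function is strictly positive — once more precisely because $1 - \frac1q < \frac1n$ — so that $\Gamma(\frac12 - \frac n2(1 - \frac1q))$ is a finite positive number. Together with the Hölder reduction from the first step this proves \eqref{vexLp-0}. There is no genuinely hard step here; the only point demanding care is that the naive route — estimating the gradient of $e^{t\Delta}\vie$ directly — would produce a nonintegrable $t^{-1/2}$ singularity as $t \dwto 0$, which is sidestepped by Lemma~\ref{Lem:Inhom}, where the initial-data contribution enters in the harmless form $e^{-t}\nrm{\nabla\vie}{L^q}$. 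It is also worth noting that the upper restriction $q < \frac{n}{n-1}$ in Theorem~\ref{Thm:Main} is exactly what renders this $L^1 \to W^{1,q}$ smoothing usable (equivalently, what makes the Duhamel kernel singularity integrable), which is the structural reason it appears in the hypotheses.
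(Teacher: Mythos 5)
Your proposal is correct and follows essentially the same route as the paper: both apply Lemma~\ref{Lem:Inhom} with source space $L^1$ and target $W^{1,q}$ (using $\io \frac{\ue}{1+\ep\ue} \le m$ from \eqref{Mass_App}), invoke \eqref{Conv_vie} for a uniform bound on $\nrm{\nabla\vie}{L^q}$, and check $1 - \frac{1}{q} < \frac{1}{n}$ from \eqref{Ass:q} to make the Gamma factor finite. The only cosmetic difference is where the Hölder step sits — you reduce to the endpoint $p = q$ first, whereas the paper applies Lemma~\ref{Lem:Inhom} directly at exponent $p$ and Hölders the initial-data term — but the argument is the same.
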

\begin{proof}
Without loss of generality assuming that $p \in [1,q]$,
we note that
\[
    1 - \frac{1}{p} < \frac{1}{n}
\]
holds true.
Indeed, if $n = 1$ this is obvious,
while if $n \ge 2$ we infer from \eqref{Ass:q} that $\frac{1}{q} > \frac{n-1}{n}$,
and that thus
\[
    1 - \frac{1}{p} \le 1 - \frac{1}{q} < 1 - \frac{n-1}{n} = \frac{1}{n}.
\]
Now, thanks to \eqref{Conv_vie} there is $c_1 > 0$ such that
\begin{align}\label{vexLp-0:vie}
    \nrm{\nabla \vie}{L^q} \le c_1 \quad\mbox{for all}\ \ep \in (0,1).
\end{align}
Then we may apply Lemma~\ref{Lem:Inhom}
along with the H\"{o}lder inequality, \eqref{vexLp-0:vie}
and \eqref{Mass_App} to estimate
\begin{align*}
    \norm{\nabla \ve}{L^p}
    &\le e^{-t} \nrm{\nabla \vie}{L^p}
    + \Gamma\left(\frac{1}{2}-\frac{n}{2}\left(1 - \frac{1}{p}\right)\right)
    \left\|\frac{\ue}{1 + \ep\ue}\right\|_{
      L^{\infty}(0, t; L^1(\Omega))}
\\
    &\le c_1 |\Omega|^{\frac{1}{p}-\frac{1}{q}}
    + \Gamma\left(\frac{1}{2}-\frac{n}{2}\left(1 - \frac{1}{p}\right)\right)
    \nnorm{\ue}{L^{\infty}}{0,t}{L^1}
\\
    &= c_1 |\Omega|^{\frac{1}{p}-\frac{1}{q}}
    + \Gamma\left(\frac{1}{2}-\frac{n}{2}\left(1 - \frac{1}{p}\right)\right)m
    \quad\mbox{for all}\ t > 0\ \mbox{and}\ \ep \in (0,1),
\end{align*}
from which \eqref{vexLp-0} can readily be derived.
%
\end{proof}

A next step of key importance is deriving an $\ep$-independent
$L^r$-estimate for $\ue$ near $t = 0$
with some $r > 1$,
which will be used in several places below
firstly in order to establish estimates away from $t = 0$ that help to construct
solution candidates,
and secondly to show that said candidates are continuous at $t = 0$
in an appropriate sense.

To launch our derivation, we state a preparatory elementary argument
concerning a suitable parameter selections that will determine essential parts
of our approach.
%
%
\begin{lem}\label{Lem:param-0}
Let $\alpha > 0$ satisfy
\begin{align}\label{param-0:alpha}
    \begin{cases}
      \alpha \in (0, \frac{1}{2}) & \mbox{if}\ n = 1, \\
      \alpha \in (\frac{n-2}{2(n-1)}, \frac{1}{2}) & \mbox{if}\ n \ge 2,
    \end{cases}
\end{align}
and assume that $r$ satisfies
\begin{align}\label{param-0:r}
    \begin{cases}
      r \in (\frac{q}{q-1+2\alpha}, \infty] & \mbox{if}\ n = 1, \\
      r \in (\frac{q}{q-1+2\alpha}, \infty) & \mbox{if}\ n = 2, \\
      r \in (\frac{q}{q-1+2\alpha}, \frac{n}{n-2}) & \mbox{if}\ n \ge 3.
    \end{cases}
\end{align}
Then there exist $s, s_1, s_2 > 1$ with
%
%
%
%
the following property\/{\rm :}
\begin{multicols}{2}
\raggedcolumns
    \begin{enumerate}[{\rm (i)}]
    \setlength{\leftskip}{5mm}
    \item $\frac{1}{s}-\frac{1}{r} < \frac{1}{n}$,\label{param-0:1}
    \setcounter{enumi}{2}
    \item $s_2(1 - 2\alpha) \in (0, q]$,\label{param-0:3}
    \setcounter{enumi}{4}
    \item $1 - \frac{1}{s_1} < \frac{2}{n}$.\label{param-0:5}
    \columnbreak
    \setcounter{enumi}{1}
    \item $s_1 \in (s, r)$,\label{param-0:2}
    \setcounter{enumi}{3}
    \item $\frac{1}{s} = \frac{1}{s_1} + \frac{1}{s_2}$,\label{param-0:4}
    \end{enumerate}
\end{multicols}
\end{lem}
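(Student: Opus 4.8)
The plan is to construct the three exponents explicitly by following the chain of constraints they must satisfy, starting from the most restrictive one. First I would observe that the quantity $\frac{q}{q-1+2\alpha}$ that appears as the lower bound for $r$ in \eqref{param-0:r} is exactly the value of $s_2(1-2\alpha)=q$ would force on $s_2$ after coupling with the other identities; more precisely, condition \eqref{param-0:3} together with \eqref{param-0:4} and \eqref{param-0:2} links all three parameters, so the natural approach is: pick $s$ first (close to $\frac{q}{q-1+2\alpha}$ from above, but strictly larger), then choose $s_1$ slightly larger than $s$, and finally \emph{define} $s_2$ by the identity \eqref{param-0:4}, i.e.\ $\frac{1}{s_2}=\frac{1}{s}-\frac{1}{s_1}$. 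One then checks that this forced value of $s_2$ satisfies $s_2>1$ and the remaining inequalities \eqref{param-0:1}, \eqref{param-0:5}, provided $s$ and $s_1$ were taken sufficiently close to their respective endpoints.

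The key verifications, in order, are as follows. (a) \textbf{$s_2$ is well-defined and $>1$:} since $s_1>s$ we get $\frac{1}{s_2}=\frac{1}{s}-\frac{1}{s_1}>0$, and by taking $s_1$ close enough to $s$ we can make $\frac{1}{s_2}$ as small as we like, hence $s_2$ as large as we like — in particular $s_2>1$, and moreover $s_2(1-2\alpha)$ can be made to exceed $\frac{q}{q-1+2\alpha}\cdot(1-2\alpha)$... wait — more carefully, I want $s_2(1-2\alpha)\in(0,q]$, and since $\alpha<\frac12$ the factor $1-2\alpha>0$, so this is an \emph{upper} bound $s_2\le \frac{q}{1-2\alpha}$; this is where the lower bound $s>\frac{q}{q-1+2\alpha}$ on $s$ is used, because $s>\frac{q}{q-1+2\alpha}$ is equivalent to $\frac{1}{s}<\frac{q-1+2\alpha}{q}=1-\frac{1-2\alpha}{q}$, i.e.\ $\frac{1-2\alpha}{q}<1-\frac1s=\frac{1}{s_2}+\bigl(1-\frac1{s_1}\bigr)$, and by choosing $s_1$ large this forces $\frac{1-2\alpha}{q}<\frac{1}{s_2}+\delta$ for small $\delta$... the clean way is: first fix $s_1$ with $s<s_1<r$ and $1-\frac1{s_1}<\frac2n$ (possible since $s<r$ and, by \eqref{param-0:r} together with $\alpha$'s range, $1-\frac1r<\frac2n$ when $n\ge 3$, trivially otherwise), then shrink $s\searrow\frac{q}{q-1+2\alpha}$ keeping $s<s_1$; as $\frac1s\uparrow 1-\frac{1-2\alpha}{q}$ one gets $\frac{1}{s_2}=\frac1s-\frac1{s_1}\to\bigl(1-\frac{1-2\alpha}{q}\bigr)-\frac1{s_1}$, and we must arrange this limiting value to be $\ge \frac{1-2\alpha}{q}$, i.e.\ $1-\frac{2(1-2\alpha)}{q}\ge\frac1{s_1}$; since $\frac1{s_1}$ can be taken close to $1-\frac1r\le 1-\frac1\infty$... — the inequality $1-\frac{2(1-2\alpha)}{q}\ge\frac1{s_1}$ holds for $s_1$ near $s$ iff it holds at $s_1=s=\frac{q}{q-1+2\alpha}$, which reads $1-\frac{2(1-2\alpha)}{q}\ge 1-\frac{1-2\alpha}{q}$, i.e.\ $-\frac{2(1-2\alpha)}{q}\ge-\frac{1-2\alpha}{q}$ — \emph{false}. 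So instead one keeps $s$ bounded away from the endpoint and exploits that $s_2(1-2\alpha)\le q$ is an \emph{open-ended} requirement downward: one wants $s_2$ \emph{not too large}, i.e.\ $\frac1{s_2}$ not too small, i.e.\ $s_1$ not too close to $s$. Thus the real order is: fix $s\in(\frac{q}{q-1+2\alpha},\,?)$ with \eqref{param-0:1} holding, then choose $s_1\in(s,r)$ with $s_1$ close enough to $s$ that $1-\frac1{s_1}<\frac2n$ \emph{and} far enough from $s$ that $\frac1{s_2}=\frac1s-\frac1{s_1}\ge\frac{1-2\alpha}{q}$; the latter reads $\frac1{s_1}\le\frac1s-\frac{1-2\alpha}{q}$, whose right side is positive precisely because $s>\frac{q}{q-1+2\alpha}$, so such $s_1>s$ exists; and $s_1<r$ can be ensured by also taking $s$ close to its lower endpoint so that $\frac1s-\frac{1-2\alpha}{q}>\frac1r$, i.e.\ $\frac1s>\frac1r+\frac{1-2\alpha}{q}$ — this is compatible with \eqref{param-0:1} ($\frac1s<\frac1r+\frac1n$) exactly when $\frac{1-2\alpha}{q}<\frac1n$, equivalently $q>(1-2\alpha)n$, which is \eqref{Ass:q}.

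So the cleanest exposition is: set $s:=$ any number with $\max\{\frac{q}{q-1+2\alpha},\,\frac{1}{\frac1r+\frac1n}\}<\frac1{(\cdot)}$... concretely, choose $\frac1s\in\bigl(\frac1r+\frac{1-2\alpha}{q},\;\min\{\frac{q-1+2\alpha}{q},\,\frac1r+\frac1n\}\bigr)$, which is a nonempty open interval precisely by $q>(1-2\alpha)n$ (lower end $<$ upper end of the min) and by $q>1$ forcing $\frac{q-1+2\alpha}{q}<1$ hence $s>1$; this immediately gives \eqref{param-0:1} and $s>\frac{q}{q-1+2\alpha}$. Then pick $\frac1{s_1}\in\bigl(\max\{\frac1r,\,\frac1s-\frac1q\cdot?,\,1-\frac2n\},\;\frac1s-\frac{1-2\alpha}{q}\bigr)$, nonempty because $\frac1s>\frac1r+\frac{1-2\alpha}{q}$ (so the $\frac1r$ bound is fine) and, in the case $n\ge3$, because $1-\frac1r<\frac2n$ from \eqref{param-0:r}; this yields \eqref{param-0:2}, \eqref{param-0:5}. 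Finally define $s_2$ by \eqref{param-0:4}; then $\frac1{s_2}=\frac1s-\frac1{s_1}\ge\frac{1-2\alpha}{q}>0$ gives $s_2(1-2\alpha)\le q$ and $s_2>0$, while $\frac1{s_2}<\frac1s\le1$ forces $s_2>1$. I expect the main obstacle to be nothing deep but purely bookkeeping: arranging all of $\frac1s,\frac1{s_1}$ in the correct nested open intervals so that \emph{simultaneously} $s_1\in(s,r)$, inequality \eqref{param-0:1} holds, $s_1$ is close enough to $s$ for \eqref{param-0:5} yet far enough for \eqref{param-0:3}, and everything stays $>1$ — the nonemptiness of the governing interval for $\frac1s$ is exactly where the two standing hypotheses $q>\max\{1,(1-2\alpha)n\}$ and (for $n\ge3$) $r<\frac{n}{n-2}$ enter, and where $\alpha<\frac12$ is used to keep $1-2\alpha>0$.
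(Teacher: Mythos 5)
The approach — pick $\frac1s$ in a suitably small open interval, then $\frac1{s_1}$ in another, and finally \emph{define} $s_2$ through (iv) — is sound in spirit, and in fact mirrors the paper's own construction (which instead fixes $s_2=\frac{q}{1-2\alpha}$ first, sets $\frac1{s_1}=\frac1r+\delta_3$ for a carefully small $\delta_3>0$, and then defines $s$ by (iv); both proofs amount to chasing the same chain of strict inequalities). However, your final, ``clean'' construction contains a genuine error: you impose the extra constraint $\frac1s<\frac{q-1+2\alpha}{q}$ (equivalently $s>\frac{q}{q-1+2\alpha}$), apparently to force $s>1$, but the lemma only requires $s>1$, not $s>\frac{q}{q-1+2\alpha}$. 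This extra constraint is not harmless: the resulting interval
\[
\left(\tfrac1r+\tfrac{1-2\alpha}{q},\;\min\Big\{\tfrac{q-1+2\alpha}{q},\,\tfrac1r+\tfrac1n\Big\}\right)
\]
can be empty. Nonemptiness against the bound $\frac{q-1+2\alpha}{q}$ requires $\frac1r<1-\frac{2(1-2\alpha)}{q}$, which is \emph{strictly stronger} than the hypothesis $\frac1r<1-\frac{1-2\alpha}{q}$ coming from \eqref{param-0:r}, and fails whenever $r$ is close to its lower endpoint. A concrete counterexample: take $n=2$, $\alpha=0.1$, $q=1.9$ (so $q\in(1.6,2)$ as required) and $r=1.8>\frac{q}{q-1+2\alpha}=\frac{1.9}{1.1}\approx1.727$. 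Then $\frac1r+\frac{1-2\alpha}{q}\approx0.977$ exceeds $\frac{q-1+2\alpha}{q}\approx0.579$, so your interval is empty, even though suitable $s,s_1,s_2$ do exist (the paper's recipe, e.g.\ $\delta_3=0.01$, produces them).

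The fix is small and leaves the rest of your argument intact: replace $\frac{q-1+2\alpha}{q}$ by $1$ in the upper endpoint, i.e.\ take $\frac1s\in\big(\frac1r+\frac{1-2\alpha}{q},\,\min\{1,\frac1r+\frac1n\}\big)$. Nonemptiness against the $1$-cap now follows from \eqref{param-0:r}, since $\frac1r<1-\frac{1-2\alpha}{q}$ gives $\frac1r+\frac{1-2\alpha}{q}<1$; nonemptiness against $\frac1r+\frac1n$ follows from $q>(1-2\alpha)n$ as you noted. With $\frac1s<1$ you still get $s>1$, and then $\frac1{s_2}=\frac1s-\frac1{s_1}<\frac1s<1$ gives $s_2>1$ exactly as in your final paragraph. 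The remaining checks — nonemptiness of the interval for $\frac1{s_1}$ (using $\frac1s>\frac1r+\frac{1-2\alpha}{q}$ for the $\frac1r$ cap and $\frac1r>1-\frac2n$ for the $1-\frac2n$ cap when $n\ge3$), and verification of (i), (ii), (iii), (v) — are all correctly carried out and go through unchanged.
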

\begin{proof}
Since the relation $q > (1 - 2\alpha) n$ holds because of \eqref{Ass:q},
we can find $\delta_1 > 0$ such that
\begin{align}\label{param-0:delta1}
    \frac{1-2\alpha}{q} = \frac{1}{n} - \delta_1.
\end{align}
Moreover, \eqref{param-0:r} entails
%
\begin{align}\label{param-0:rela}
    \frac{1}{r} < 1 - \frac{1-2\alpha}{q},
\end{align}
which shows that there is $\delta_2 > \frac{1-2\alpha}{q}$ such that
\begin{align}\label{param-0:delta2}
    \frac{1}{r} = 1 - \delta_2.
\end{align}
Then we see from \eqref{param-0:delta1} that
\[
    \delta_1 + \delta_2 = \frac{1}{n} - \frac{1-2\alpha}{q} + \delta_2
    > \frac{1}{n},
\]
which enables us to pick $\delta_3 > 0$ such that
$\delta_3 < \min\{\delta_1, \delta_2\}$ and
\begin{align}\label{param-0:delta3}
    \delta_1 + \delta_2 - \delta_3 > \frac{1}{n}.
\end{align}
Now, we put
\begin{align}\label{param-0:s1}
s_1 := \frac{1}{\frac{1}{r} + \delta_3}
\end{align}
and obtain from \eqref{param-0:delta2} that
%
%
%
\[
s_1 = \frac{1}{1 - \delta_2 + \delta_3} > 1.
\]
Furthermore, letting
\begin{align}\label{param-0:s2}
s_2 := \frac{q}{1-2\alpha}
\end{align}
yields $s_2 > n$ from the relation $q > (1-2\alpha)n$.
If we define
\begin{align}\label{param-0:s}
    s := \frac{s_1 s_2}{s_1 + s_2},
\end{align}
we accordingly obtain \eqref{param-0:4}, whence
\eqref{param-0:delta1} and \eqref{param-0:delta2}
together with \eqref{param-0:delta3} entail that
\begin{align*}
    \frac{1}{s}
    &= \frac{1}{s_1} + \frac{1}{s_2}
\\
    &= 1 + \frac{1}{n} - (\delta_1 + \delta_2 - \delta_3)
\\
    &< 1
\end{align*}
and therefore $s > 1$.
We claim that the parameters $s, s_1, s_2$ as in
\eqref{param-0:s1}, \eqref{param-0:s2} and \eqref{param-0:s} satisfy
\eqref{param-0:1}, \eqref{param-0:2}, \eqref{param-0:3} and \eqref{param-0:5}.
To see this,
let us first note that \eqref{param-0:3} immediately follows from \eqref{param-0:s2},
that \eqref{param-0:2} holds since $s < s_1$ from \eqref{param-0:s},
and that \eqref{param-0:5} is obvious when $n = 1$. 
Next, invoking \eqref{param-0:4} and \eqref{param-0:delta1} we would see that
\begin{align*}
    \frac{1}{s}-\frac{1}{r}
    &= \frac{1}{s_1} + \frac{1}{s_2} - \frac{1}{r}
\\
    &= \frac{1}{r} + \delta_3 + \frac{1-2\alpha}{q} - \frac{1}{r}
\\
    &= \delta_3 + \frac{1}{n} - \delta_1
\\
    &< \frac{1}{n}
\end{align*}
and hence \eqref{param-0:1}.
Finally, for $n \ge 2$, by \eqref{param-0:r}
we have $\frac{1}{r} > \frac{n-2}{n}$,
and thus
\begin{align*}
    1 - \frac{1}{s_1}
    &= 1 - \frac{1}{r} - \delta_3
\\
    &< 1 - \frac{1}{r}
\\
    &< 1 - \frac{n-2}{n}
\\
    &= \frac{2}{n},
\end{align*}
which proves \eqref{param-0:5}.
\end{proof}
%
%
\begin{remark}\label{Rem:param-0:r}
It can be verified that under the assumption in \eqref{Ass:q},
we can actually pick $r$ satisfying \eqref{param-0:r}.
Indeed, for $n \ge 3$, it is sufficient to show that
\[
    \frac{q}{q-1+2\alpha} < \frac{n}{n-2},
\]
which is equivalent to the inequality $q > \frac{n}{2}(1 - 2\alpha)$,
and this is always satisfied when $q$ fulfills \eqref{Ass:q}.
\end{remark}

%
%
\begin{remark}\label{Rem:param-0:alpha}
Lemma~\ref{Lem:param-0} requires the upper bound for $\alpha$
as in \eqref{param-0:alpha}, however,
we can still proceed without this restriction on $\alpha$.
More precisely, when $\alpha \ge \frac{1}{2}$,
choosing an arbitrary $\altil$ that satisfies \eqref{param-0:alpha}
so that by \eqref{Ass:f:Main} we have
\begin{align*}
    f(\xi)
    &\le k_f (1 + \xi)^{-\alpha}
\\
    &= k_f (1 + \xi)^{-\altil} (1 + \xi)^{\altil - \alpha}
\\
    &\le k_f (1 + \xi)^{-\altil}
    \quad\mbox{for all}\ \xi \ge 0,
\end{align*}
and thus we can assume that $\alpha$ satisfies \eqref{param-0:alpha}
without loss of generality.
\end{remark}

Having made sure that there are suitable parameters,
we are now in a position to obtain the following estimate for $\ue$.
%
%
\begin{lem}\label{Lem:ueLr-0}
Let $r$ satisfy
\begin{align}\label{ueLr-0:r}
    \begin{cases}
      r \in (1, \infty] & \mbox{if}\ n = 1, \\
      r \in (1, \infty) & \mbox{if}\ n = 2, \\
      r \in (1, \frac{n}{n-2}) & \mbox{if}\ n \ge 3,
    \end{cases}
\end{align}
and suppose that $\alpha$ satisfies \eqref{param-0:alpha}.
Then there exists
$\gamma \in (1, \infty]$ satisfying $1 - \frac{1}{\gamma} < \frac{2}{n}$
with the following property\/{\rm :}
For all $T > 0$ there is
$C = C(\alpha, T, q, \gamma, m, n, |\Omega|) > 0$ such that
\begin{gather}
    \norm{\ue}{L^r} \le C t^{-\frac{n}{2}(1 - \frac{1}{\gamma})}
      \quad\mbox{for all}\ t \in (0, T)\ \mbox{and}\ \ep\in (0,1),
    \label{ueLr-0:Ct}
\\ \intertext{and}
    \int_{0}^{t} \nrm{\ue(\cdot, \sigma)}{L^r} \,\intd{\sigma}
    \le C t^{1 - \frac{n}{2}(1 - \frac{1}{\gamma})}
      \quad\mbox{for all}\ t \in (0, T)\ \mbox{and}\ \ep\in (0,1).
      \label{ueLr-0:IntCt}
\end{gather}
%
\end{lem}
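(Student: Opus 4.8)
The plan is to use the variation-of-constants representation for the first equation of \eqref{Sys:Reg}, namely
\[
    \ue(\cdot, t)
    = e^{t\Delta} \uie
    - \int_0^t e^{(t-\sigma)\Delta} \nabla \cdot
      \big( \ue(\cdot, \sigma) f(|\nabla \ve(\cdot, \sigma)|^2)
      \nabla \ve(\cdot, \sigma) \big) \,\intd{\sigma},
\]
and to estimate both terms in $L^r(\Omega)$ using the smoothing properties from Lemma~\ref{Lem:Semi} together with the flux-limitation bound \eqref{Ass:f:Main}. The first summand is handled by Lemma~\ref{Lem:Semi}\eqref{Semi-1}: splitting $\uie = (\uie - \frac{m}{|\Omega|}) + \frac{m}{|\Omega|}$ and using \eqref{uie:mass} gives $\nrm{e^{t\Delta}\uie}{L^r} \le C(1 + t^{-\frac{n}{2}(1 - \frac{1}{r})}) m$, which already has the desired form once we arrange $\gamma$ so that $1 - \frac{1}{\gamma} \ge 1 - \frac{1}{r}$; in fact I would like to choose $\gamma$ as a single exponent that dominates the time singularities coming from \emph{both} summands, so the final $\gamma$ will be determined by the worse of the two.

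For the Duhamel integral, the key point is that by \eqref{Ass:f:Main},
\[
    \big| \ue f(|\nabla \ve|^2) \nabla \ve \big|
    \le k_f \, \ue (1 + |\nabla \ve|^2)^{-\alpha} |\nabla \ve|
    \le k_f \, \ue |\nabla \ve|^{1 - 2\alpha}
    \quad\mbox{in}\ \Ombar \times (0, \infty),
\]
so it suffices to control $\io \ue |\nabla \ve|^{1-2\alpha}$ in a suitable Lebesgue norm. Here I invoke the parameter selection from Lemma~\ref{Lem:param-0}: with $r$ as in \eqref{ueLr-0:r} (possibly shrinking $r$ upward first, since \eqref{ueLr-0:r} is weaker than \eqref{param-0:r}, one reduces to the case $r \in (\frac{q}{q-1+2\alpha}, \cdot)$ after noting $\frac{q}{q-1+2\alpha} < $ the relevant upper bound by Remark~\ref{Rem:param-0:r}), pick $s, s_1, s_2 > 1$ with $\frac{1}{s} = \frac{1}{s_1} + \frac{1}{s_2}$, $s_1 \in (s,r)$, $s_2(1-2\alpha) \in (0,q]$, $\frac{1}{s} - \frac{1}{r} < \frac{1}{n}$ and $1 - \frac{1}{s_1} < \frac{2}{n}$. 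By Hölder's inequality,
\[
    \nrm{\ue |\nabla \ve|^{1-2\alpha}}{L^s}
    \le \nrm{\ue}{L^{s_1}} \, \nrm{|\nabla \ve|^{1-2\alpha}}{L^{s_2}}
    = \nrm{\ue}{L^{s_1}} \, \nrm{\nabla \ve}{L^{s_2(1-2\alpha)}}^{1-2\alpha},
\]
and the last factor is bounded uniformly in $t$ and $\ep$ by Lemma~\ref{Lem:vexLp-0}, since $s_2(1-2\alpha) \in (0,q]$. Then Lemma~\ref{Lem:Semi}\eqref{Semi-3}, applicable because $1 < s \le r$ and $\frac{1}{s}-\frac{1}{r} < \frac{1}{n}$ (so the time exponent $\frac{1}{2} + \frac{n}{2}(\frac1s - \frac1r) < 1$ is integrable), yields
\[
    \nrm{\ue(\cdot, t)}{L^r}
    \le C t^{-\frac{n}{2}(1 - \frac{1}{r})}
    + C \int_0^t \big(1 + (t-\sigma)^{-\frac{1}{2} - \frac{n}{2}(\frac{1}{s} - \frac{1}{r})}\big)
      \nrm{\ue(\cdot, \sigma)}{L^{s_1}} \,\intd{\sigma}.
\]
This is a Volterra-type integral inequality coupling $\nrm{\ue}{L^r}$ to $\nrm{\ue}{L^{s_1}}$ with $s_1 < r$; since $s_1 < r$, I would first apply the \emph{same} argument with $r$ replaced by $s_1$ (the conditions in Lemma~\ref{Lem:param-0} are stable under this, choosing fresh auxiliary exponents, and one can iterate finitely often down toward $L^1$, where the uniform bound \eqref{uie:mass}, \eqref{Mass_App} is available) to produce a bound $\nrm{\ue(\cdot, t)}{L^{s_1}} \le C t^{-\beta}$ for a suitable $\beta \in [0, 1)$, and then substitute this into the displayed inequality. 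The resulting $\sigma$-integral $\int_0^t (t-\sigma)^{-a}\sigma^{-\beta}\,\intd\sigma$ is a Beta-function integral equal to $C t^{1 - a - \beta}$ provided $a + \beta < 1$; checking this exponent inequality is exactly what the parameter constraints $\frac{1}{s}-\frac1r < \frac1n$ and $1 - \frac1{s_1} < \frac2n$ are designed to guarantee, and it fixes the final exponent $\frac{n}{2}(1-\frac1\gamma)$ and confirms $1 - \frac1\gamma < \frac2n$. Finally \eqref{ueLr-0:IntCt} follows from \eqref{ueLr-0:Ct} by integrating $t \mapsto t^{-\frac{n}{2}(1-\frac1\gamma)}$ over $(0,t)$, which is finite precisely because $\frac{n}{2}(1 - \frac1\gamma) < 1$.

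\textbf{Main obstacle.} The delicate part is the bootstrap: one must organize the finitely many exponent reductions $r \to s_1 \to \cdots$ so that at each stage the Hölder splitting and the Beta-integral convergence condition $a + \beta < 1$ remain satisfied, and so that the accumulated time-singularity exponent stays strictly below $1$; this is where the lower bound on $\alpha$ (hence the lower bound $q > (1-2\alpha)n$ in \eqref{Ass:q}) is essential, since it is exactly what makes $s_2 = \frac{q}{1-2\alpha} > n$ and thereby keeps $\frac1s - \frac1r < \frac1n$ achievable. Care is also needed because $r = \infty$ is allowed when $n = 1,2$, so the borderline cases of Lemma~\ref{Lem:Semi} must be invoked with $p = \infty$, which is why \eqref{Semi-3} is stated to cover $1 < q < p = \infty$.
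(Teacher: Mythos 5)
Your overall framework matches the paper's: Duhamel/variation-of-constants, the flux-limitation bound $|uf(|\nabla v|^2)\nabla v|\le k_f\,\ue|\nabla\ve|^{1-2\alpha}$, the H\"older split with exponents $s,s_1,s_2$ from Lemma~\ref{Lem:param-0}, Lemma~\ref{Lem:vexLp-0} for the $\nabla\ve$ factor, and Lemma~\ref{Lem:Semi}\eqref{Semi-1}, \eqref{Semi-3} for the two Duhamel terms. The conclusion \eqref{ueLr-0:IntCt} from \eqref{ueLr-0:Ct} by integration is also exactly as in the paper. Where you depart, however, there is a genuine gap.

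You propose to handle the term $\nrm{\ue(\cdot,\sigma)}{L^{s_1}}$ by \emph{bootstrapping}: since $s_1<r$, run the same argument again with $r$ replaced by $s_1$, and iterate ``finitely often down toward $L^1$.'' This step does not close. The H\"older split forces $\frac1s=\frac1{s_1}+\frac1{s_2}$ with $s_2=\frac{q}{1-2\alpha}$ essentially fixed, and Lemma~\ref{Lem:Semi}\eqref{Semi-3} requires $s>1$; consequently every auxiliary exponent $s_1$ produced by Lemma~\ref{Lem:param-0} remains strictly above $1$, and indeed the hypothesis of Lemma~\ref{Lem:param-0} requires $r>\frac{q}{q-1+2\alpha}>1$ at every stage. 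There is therefore no finite iteration that ever reaches $L^1$, where the only a priori bound \eqref{Mass_App} lives, and you do not supply an alternative base case. As written, the argument is a descent with no floor, and the inequality you end up with still relates two unknown quantities.

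The paper sidesteps this with a single interpolation rather than an iteration: it writes $\nrm{\ue(\cdot,\sigma)}{L^{s_1}}\le\nrm{\ue(\cdot,\sigma)}{L^1}^{1-\theta}\nrm{\ue(\cdot,\sigma)}{L^r}^{\theta}$ with $\theta=\frac{s_1-1}{(1-\frac1r)s_1}\in(0,1)$, uses mass conservation for the $L^1$ factor, and tracks the weighted sup $R_\ep(T)=\sup_{\sigma\in(0,T)}\sigma^{\frac n2(1-\frac1r)}\nrm{\ue(\cdot,\sigma)}{L^r}$. This produces a \emph{self-referential} inequality $R_\ep(T)\le c + cR_\ep^\theta(T)$, which closes at once via Young's inequality precisely because $\theta<1$, i.e.\ because $s_1<r$. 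That interpolation-plus-Young step is the missing ingredient in your plan: once you insert it in place of the bootstrap, the rest of your outline (choice of $\gamma$, the Beta-type integral computation, the endpoint case $r=\infty$ for $n\le 2$ via Lemma~\ref{Lem:Semi}\eqref{Semi-3} with $p=\infty$) goes through essentially as the paper does it.
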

\begin{proof}
We first note that by the H\"{o}lder inequality,
it is sufficient to consider the case when $r$ satisfies \eqref{param-0:r}.
Then Lemma~\ref{Lem:param-0} provides $s, s_1, s_2 > 1$
fulfilling \eqref{param-0:1}--\eqref{param-0:5} in
that lemma.
Now, fixing $T > 0$, by variation-of-constants formula we have
\begin{align}
\notag
    \norm{\ue}{L^r}
    &\le \nrm{e^{t\Delta} \uie}{L^r}
    + \int_0^t \nrm{e^{(t-\sigma)\Delta} \nabla \cdot (
      \ue(\cdot, \sigma) f(|\nabla \ve(\cdot, \sigma)|^2)
      \nabla \ve(\cdot, \sigma))}{L^r}\,\intd{\sigma}
\\
    &=: I_{1,\ep}(t) + I_{2,\ep}(t)
    \label{ueLr-0:ueLr}
\end{align}
for all $t\in(0,T)$ and $\ep\in(0,1)$.
In view of \eqref{Mass_App},
an application of Lemma~\ref{Lem:Semi}~\eqref{Semi-1}
in conjunction with \eqref{uie:mass} yields $c_1 > 0$ such that
\begin{align}
\notag
    I_{1,\ep}(t)
    &\le \nrm{e^{t\Delta}(\uie - \cl{\uie})}{L^r} + \nrm{\cl{\uie}}{L^r}
\\ \notag
    &\le c_1(1 + t^{-\frac{n}{2}(1 - \frac{1}{r})}) \nrm{\uie - \cl{\uie}}{L^1}
    + \frac{m}{|\Omega|^{1-\frac{1}{r}}}
\\
    &\le 2 c_1 m (1 + t^{-\frac{n}{2}(1 - \frac{1}{r})})
    + \frac{m}{|\Omega|^{1-\frac{1}{r}}}
    \label{ueLr-0:I1ep}
\end{align}
for all $t\in (0,T)$ and $\ep\in (0,1)$.
To estimate $I_{2,\ep}(t)$,
we invoke Lemma~\ref{Lem:Semi}~\eqref{Semi-3} along with \eqref{Ass:f:Main}
to find $c_2 > 0$ fulfilling
\begin{align}\label{ueLr-0:I2ep}
    I_{2,\ep}(t)
    \le c_2 k_f \int_0^t (1 + (t-\sigma)^{
      -\frac{1}{2}-\frac{n}{2}(\frac{1}{s}-\frac{1}{r})})
      \nrm{\ue(\cdot, \sigma) |\nabla\ve(\cdot, \sigma)|^{1-2\alpha}}{L^s}
      \,\intd{\sigma}
\end{align}
for all $t\in(0,T)$ and $\ep\in(0,1)$.
In order to further estimate herein,
we abbreviate
\begin{align}\label{ueLr-0:Rep}
    R_{\ep}(T) := \sup_{\sigma \in (0,T)}
      \sigma^{\frac{n}{2}(1-\frac{1}{r})} \nrm{\ue(\cdot,\sigma)}{L^r}
        \quad\mbox{for}\ \ep\in (0,1),
\end{align}
and use the H\"{o}lder inequality as well as the interpolation inequality
to obtain
\begin{align}
\notag
    &\nrm{\ue(\cdot, \sigma) |\nabla\ve(\cdot, \sigma)|^{1-2\alpha}}{L^s}
\\ \notag
    &\quad\,\le \nrm{\ue(\cdot, \sigma)}{L^{s_1}}
      \nrm{\nabla\ve(\cdot, \sigma)}{L^{s_{2}(1-2\alpha)}}^{1-2\alpha}
\\ \notag
    &\quad\,\le \nrm{\ue(\cdot, \sigma)}{L^1}^{1-\theta}
      \nrm{\ue(\cdot, \sigma)}{L^r}^{\theta}
      \nrm{\nabla\ve(\cdot, \sigma)}{L^{s_{2}(1-2\alpha)}}^{1-2\alpha}
\\ \notag
    &\quad\,= m^{1-\theta}
      (\sigma^{\frac{n}{2}(1-\frac{1}{r})}\nrm{\ue(\cdot,\sigma)}{L^r})^{\theta}
      \sigma^{-\frac{n}{2}(1-\frac{1}{r})\theta}
      \nrm{\nabla\ve(\cdot, \sigma)}{L^{s_{2}(1-2\alpha)}}^{1-2\alpha}
\\
    &\quad\,\le m^{1-\theta} R_{\ep}^{\theta}(T)
      \sigma^{-\frac{n}{2}(1-\frac{1}{r})\theta}
      \nrm{\nabla\ve(\cdot, \sigma)}{L^{s_{2}(1-2\alpha)}}^{1-2\alpha}
    \label{ueLr-0:intp}
\end{align}
for any $\sigma \in (0,T)$ and $\ep \in (0,1)$,
where $\theta := \frac{s_1 - 1}{(1-\frac{1}{r})s_1} \in (0,1)$.
According to Lemma~\ref{Lem:vexLp-0},
the property \eqref{param-0:3} in Lemma~\ref{Lem:param-0}
ensures that we can choose $c_3 > 0$ such that
\[
    \nrm{\nabla \ve(\cdot, \sigma)}{L^{s_{2}(1-2\alpha)}}^{1-2\alpha}
    \le c_3
\]
for any $\sigma \in (0,T)$ and $\ep\in (0,1)$.
Combining this with \eqref{ueLr-0:intp},
we thus can find a constant $c_4 = c_4(c_2, c_3, k_f, m, \theta) > 0$ such that
\eqref{ueLr-0:I2ep} turns into
\begin{align}
\notag
    I_{2,\ep}(t)
    &\le c_4 R_{\ep}^{\theta}(T) \int_0^t (1 + (t-\sigma)^{
      -\frac{1}{2}-\frac{n}{2}(\frac{1}{s}-\frac{1}{r})})
      \sigma^{-\frac{n}{2}(1-\frac{1}{r})\theta}\,\intd{\sigma}
\\
    &= c_4 R_{\ep}^{\theta}(T) \int_0^t (1 + (t-\sigma)^{
      -\frac{1}{2}-\frac{n}{2}(\frac{1}{s}-\frac{1}{r})})
      \sigma^{-\frac{n}{2}\big(1-\frac{1}{s_1}\big)}\,\intd{\sigma}
      \label{ueLr-0:I2ep2}
\end{align}
for every $t \in (0, T)$ and $\ep \in (0,1)$.
We then have to compute the integral in the right of this.
Noting that $\frac{1}{2} + \frac{n}{2}(\frac{1}{s} - \frac{1}{r}) < 1$ and
$\frac{n}{2}(1 - \frac{1}{s_1}) < 1$ from \eqref{param-0:1} and \eqref{param-0:5}
in Lemma~\ref{Lem:param-0},
a straightforward calculation yields
$c_5 = c_5(n, r, s, s_1) > 0$ fulfilling
\begin{align*}
    &\int_0^t (1 + (t-\sigma)^{
      -\frac{1}{2}-\frac{n}{2}(\frac{1}{s}-\frac{1}{r})})
      \sigma^{-\frac{n}{2}\big(1-\frac{1}{s_1}\big)}\,\intd{\sigma}
\\
    &\quad\,= \int_0^{\frac{t}{2}} (1 + (t-\sigma)^{
      -\frac{1}{2}-\frac{n}{2}(\frac{1}{s}-\frac{1}{r})})
      \sigma^{-\frac{n}{2}\big(1-\frac{1}{s_1}\big)}\,\intd{\sigma}
      + \int_{\frac{t}{2}}^t (1 + (t-\sigma)^{
      -\frac{1}{2}-\frac{n}{2}(\frac{1}{s}-\frac{1}{r})})
      \sigma^{-\frac{n}{2}\big(1-\frac{1}{s_1}\big)}\,\intd{\sigma}
\\
    &\quad\,\le \left(1 + \left(\frac{t}{2}\right)^{\!
      -\frac{1}{2}-\frac{n}{2}(\frac{1}{s}-\frac{1}{r})}\right)
    \int_{0}^{\frac{t}{2}} \sigma^{
      -\frac{n}{2}\big(1-\frac{1}{s_1}\big)}\,\intd{\sigma}
\\
    &\qquad\, + \left(\frac{t}{2}\right)^{\!-\frac{n}{2}\big(1-\frac{1}{s_1}\big)}\!
      \int_{\frac{t}{2}}^t (1 + (t-\sigma)^{
      -\frac{1}{2}-\frac{n}{2}(\frac{1}{s}-\frac{1}{r})})\,\intd{\sigma}
\\
    &\quad\,= \left(1 + \left(\frac{t}{2}\right)^{\!
      -\frac{1}{2}-\frac{n}{2}(\frac{1}{s}-\frac{1}{r})}\right)
      \frac{1}{1 - \frac{n}{2}(1 - \frac{1}{s_1})}
      \left(\frac{t}{2}\right)^{\!1 - \frac{n}{2}\big(1 - \frac{1}{s_1}\big)}
\\
    &\qquad\, + \left(\frac{t}{2}\right)^{\!-\frac{n}{2}\big(1-\frac{1}{s_1}\big)}
      \left(\frac{t}{2} + \frac{1}{\frac{1}{2} - \frac{n}{2}(\frac{1}{s}-\frac{1}{r})}
      \left(\frac{t}{2}\right)^{\!
        \frac{1}{2} - \frac{n}{2}(\frac{1}{s}-\frac{1}{r})}\right)
\\
    &\quad\, \le c_5 t^{1-\frac{n}{2}\big(1-\frac{1}{s_1}\big)}
      + c_5 t^{\frac{1}{2} - \frac{n}{2}(\frac{1}{s}-\frac{1}{r})
        - \frac{n}{2}\big(1 - \frac{1}{s_1}\big)}
\end{align*}
for all $t \in (0, T)$.
Inserting this into \eqref{ueLr-0:I2ep2} shows that
\begin{align}\label{ueLr-0:I2ep3}
    I_{2,\ep}(t)
    \le c_4 c_5 R_{\ep}^{\theta}(T)
      \big(t^{1-\frac{n}{2}\big(1-\frac{1}{s_1}\big)}
        + t^{\frac{1}{2} - \frac{n}{2}(\frac{1}{s}-\frac{1}{r})
        - \frac{n}{2}\big(1 - \frac{1}{s_1}\big)}\big)
\end{align}
for all $t \in (0, T)$ and $\ep \in (0, 1)$.
Collecting \eqref{ueLr-0:ueLr}, \eqref{ueLr-0:I1ep}
and \eqref{ueLr-0:I2ep3},
we thus infer the existence of
$c_6 = c_6(c_1, r, |\Omega|, m, c_4, c_5) > 0$ such that
\[
    \norm{\ue}{L^r}
    \le c_6(1 + t^{-\frac{n}{2}(1 - \frac{1}{r})})
      + c_6 R_{\ep}^{\theta}(T)
      \big(t^{1-\frac{n}{2}\big(1-\frac{1}{s_1}\big)}
        + t^{\frac{1}{2} - \frac{n}{2}(\frac{1}{s}-\frac{1}{r})
        - \frac{n}{2}\big(1 - \frac{1}{s_1}\big)}\big)
\]
for any $t \in (0, T)$ and $\ep \in (0, 1)$.
Multiplying this by $t^{\frac{n}{2}(1 - \frac{1}{r})}$ entails that
\begin{align}
\notag
    t^{\frac{n}{2}(1 - \frac{1}{r})}\norm{\ue}{L^r}
    &\le c_6 (t^{\frac{n}{2}(1 - \frac{1}{r})} + 1)
    + c_6 R_{\ep}^{\theta}(T)
    \big(t^{1+\frac{n}{2}\big(\frac{1}{s_1} - \frac{1}{r}\big)}
        + t^{\frac{1}{2} - \frac{n}{2}\frac{1}{s_2}}\big)
\\
    &\le c_6 (T^{\frac{n}{2}(1 - \frac{1}{r})} + 1)
    + c_6 R_{\ep}^{\theta}(T)
    \big(T^{1+\frac{n}{2}\big(\frac{1}{s_1} - \frac{1}{r}\big)}
        + T^{\frac{1}{2} - \frac{n}{2}\frac{1}{s_2}}\big)
    \label{ueLr-0:t_ueLr}
\end{align}
for any $t \in (0, T)$ and $\ep \in (0,1)$, where we have used that
\[
    \frac{1}{2} - \frac{n}{2}\left(\frac{1}{s} - \frac{1}{r}\right)
      - \frac{n}{2}\left(1 - \frac{1}{s_1}\right)
      + \frac{n}{2}\left(1 - \frac{1}{r}\right)
      = \frac{1}{2} - \frac{n}{2}\left(\frac{1}{s} - \frac{1}{s_1}\right)
    = \frac{1}{2} - \frac{n}{2} \cdot \frac{1}{s_2},
\]
valid by Lemma~\ref{Lem:param-0}~\eqref{param-0:4},
and that
\begin{align*}
    \frac{1}{2} - \frac{n}{2}\cdot \frac{1}{s_2}
    &= \frac{1}{2} - \frac{n}{2}\left(\frac{1}{s} - \frac{1}{s_1}\right)
\\
    &> \frac{1}{2} - \frac{n}{2}\left(\frac{1}{s} - \frac{1}{r}\right)
\\
    &> \frac{1}{2} - \frac{n}{2} \cdot \frac{1}{n}
\\
    &= 0
\end{align*}
by Lemma~\ref{Lem:param-0}~\eqref{param-0:1}, \eqref{param-0:2}.
From \eqref{ueLr-0:t_ueLr} we therefore obtain
$c_7 = c_7(c_6, T, n, r, s_1, s_2) > 0$ such that
%
\[
    t^{\frac{n}{2}(1 - \frac{1}{r})}\norm{\ue}{L^r}
    \le c_7 + c_7 R_{\ep}^{\theta}(T)
\]
for all $t \in (0, T)$ and $\ep \in (0, 1)$.
Since the right-hand side of this is independent of $t\in (0, T)$,
it holds that
\[
    R_{\ep}(T) \le c_7 + c_7 R_{\ep}^{\theta}(T)
\]
for any $\ep \in (0,1)$.
Now, by virtue of the Young inequality, we obtain that
\[
    (1 - \theta) R_{\ep}(T) \le c_7 
    + (1 - \theta) c_7^{\frac{1}{1 - \theta}}
\]
for all $\ep \in (0,1)$, and thus
\[
    R_{\ep}(T) \le \frac{c_7}{1 - \theta} + c_7^{\frac{1}{1 - \theta}} =: c_8
\]
for any $\ep \in (0,1)$,
which along with \eqref{ueLr-0:Rep} leads to
\[
    t^{\frac{n}{2}(1 - \frac{1}{r})}\norm{\ue}{L^r}
    \le c_8
\]
for all $t \in (0, T)$ and $\ep \in (0,1)$.
Since $c_8 > 0$ is independent of $\ep \in (0,1)$, this finally
yields \eqref{ueLr-0:Ct}.
Furthermore, the inequality \eqref{ueLr-0:IntCt} results from integrating
\eqref{ueLr-0:Ct} on $(0,T)$.
\end{proof}
%
%
\begin{remark}\label{Rem:gamma}
We see from the proof of Lemma~\ref{Lem:ueLr-0} that
if $r$ satisfies \eqref{param-0:r},
then we can take $\gamma = r$ in \eqref{ueLr-0:Ct} and \eqref{ueLr-0:IntCt}.
\end{remark}

In preparation of our analysis regarding regularity of solutions
away from $t = 0$,
we close this section with uniform boundedness of $\ve$ in
$L^{\infty}(0, \infty; L^p(\Omega))$ for some $p > 0$,
which is a consequence of Lemma~\ref{Lem:vexLp-0}.
%
%
\begin{lem}\label{Lem:veLp-0}
Let $n \ge 2$ and let $p \in (0, q_1]$,
where $q_1 > q$ satisfies $\frac{1}{q_1} = \frac{1}{q}-\frac{1}{n}$.
Then there exists $C = C(p, q, q_1, m, n) > 0$ such that
\[
    \norm{\ve}{L^p} \le C
      \quad\mbox{for all}\ t > 0\ \mbox{and}\ \ep\in (0,1).
\]
\end{lem}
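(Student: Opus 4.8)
The plan is to interpolate between the $L^1$ bound for $\ve$, which is immediate from mass conservation and the second equation, and the gradient bound supplied by Lemma~\ref{Lem:vexLp-0}. More precisely, testing (or rather using the variation-of-constants representation from Lemma~\ref{Lem:Inhom} with $p = q = 1$ together with \eqref{Mass_App}) shows that $\norm{\ve}{L^1}$ is bounded uniformly in $t > 0$ and $\ep \in (0,1)$; combined with \eqref{vexLp-0} for $p = q$, this gives a uniform bound on $\ve(\cdot, t)$ in $W^{1,q}(\Omega)$. Since $n \ge 2$ and $\frac{1}{q_1} = \frac{1}{q} - \frac{1}{n} > 0$ by the assumption $q < \frac{n}{n-1}$, the Sobolev embedding $W^{1,q}(\Omega) \embd L^{q_1}(\Omega)$ yields a uniform bound for $\norm{\ve}{L^{q_1}}$, and then the desired estimate for any $p \in (0, q_1]$ follows by the H\"older inequality on the bounded domain $\Omega$.

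First I would record the $L^1$ bound: applying Lemma~\ref{Lem:Inhom} with $p = q = 1$ (note $\frac{1}{1} - \frac{1}{1} = 0 < \frac{1}{n}$) to the second equation of \eqref{Sys:Reg}, and using that $\big\| \frac{\ue}{1 + \ep\ue} \big\|_{L^\infty(0,t;L^1(\Omega))} \le \nnorm{\ue}{L^\infty}{0,t}{L^1} = m$ by \eqref{Mass_App}, together with $\nrm{\vie}{L^1} \to \nrm{v_0}{L^1}$ from \eqref{Conv_vie}, gives a constant $c_1 > 0$ with $\norm{\ve}{L^1} \le c_1$ for all $t > 0$ and $\ep \in (0,1)$. (Alternatively one can integrate the second equation directly over $\Omega$ and solve the resulting ODE for $\io \ve(\cdot,t)$, again using \eqref{Mass_App}.) Next I would invoke Lemma~\ref{Lem:vexLp-0} with $p = q$ to get $\norm{\nabla \ve}{L^q} \le c_2$ uniformly. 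Then I would apply the Gagliardo--Nirenberg--Sobolev inequality in the form $\nrm{w}{L^{q_1}} \le C_{\rm GNS}\big( \nrm{\nabla w}{L^q} + \nrm{w}{L^1} \big)$, valid on the smoothly bounded domain $\Omega \subset \R^n$ precisely because $\frac{1}{q_1} = \frac{1}{q} - \frac{1}{n}$, to conclude $\norm{\ve}{L^{q_1}} \le C_{\rm GNS}(c_1 + c_2) =: c_3$. Finally, for general $p \in (0, q_1]$ I would use $\norm{\ve}{L^p} \le |\Omega|^{\frac{1}{p} - \frac{1}{q_1}} \norm{\ve}{L^{q_1}} \le |\Omega|^{\frac{1}{p} - \frac{1}{q_1}} c_3$, which is the claimed bound.

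I do not expect any serious obstacle here; the lemma is a short corollary of Lemma~\ref{Lem:vexLp-0}. The only point that needs a little care is the critical Sobolev exponent: the embedding $W^{1,q}(\Omega) \embd L^{q_1}(\Omega)$ with $\frac{1}{q_1} = \frac{1}{q} - \frac{1}{n}$ is the borderline (endpoint) case, which is still valid on a bounded Lipschitz domain, and one must make sure the constant depends only on $\Omega$ (hence on $n$) and the exponents, not on $\ep$ — which it does, since $\ep$ enters only through the already-uniform bounds $c_1, c_2$. One should also note that the hypothesis $n \ge 2$ is what makes $q_1$ finite and positive (for $n = 1$ one would have $\frac{1}{q} - 1 \le 0$ under \eqref{Ass:q}, so the statement is phrased only for $n \ge 2$); this is consistent with the restriction $q < \frac{n}{n-1}$ in \eqref{Ass:q}, which guarantees $\frac{1}{q} - \frac{1}{n} > 0$.
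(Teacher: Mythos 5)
Your proof follows essentially the same route as the paper's (uniform $W^{1,q}$ control of $\ve$ from Lemma~\ref{Lem:vexLp-0}, the embedding $W^{1,q}(\Omega)\hookrightarrow L^{q_1}(\Omega)$, then H\"older on the bounded domain for $p\le q_1$), but you supply an extra ingredient that the paper's printed argument glosses over. The paper passes directly from $\norm{\nabla\ve}{L^q}\le c_1$ to $\norm{\ve}{L^{q_1}}\le c_2\norm{\nabla\ve}{L^q}$, which is not a consequence of the Sobolev embedding alone (constant functions violate it), so an additional low-order bound on $\ve$ is implicitly needed. You close exactly this gap by first establishing a uniform $L^1$ bound for $\ve$ — either from the variation-of-constants formula of Lemma~\ref{Lem:Inhom} together with mass conservation \eqref{Mass_App} (note the stated conclusion of Lemma~\ref{Lem:Inhom} only bounds $\nabla z$, so here you should really use the representation formula itself together with the $L^1$-contraction of the heat semigroup) or, more simply as you note, from integrating the second equation of \eqref{Sys:Reg} over $\Omega$ and solving the ODE for $\io\ve$ — and only then applying the embedding $W^{1,q}\hookrightarrow L^{q_1}$ to the full $W^{1,q}$-norm. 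This is a more careful version of the same argument; it also avoids the near-circularity you correctly observe would arise if one tried to obtain the $L^1$ bound on $\ve$ from Lemma~\ref{Lem:veLp-0} itself, as the paper does later in Lemma~\ref{Lem:ueveHolder}. The only caveat in your write-up is the borderline Sobolev embedding $W^{1,q}\hookrightarrow L^{q_1}$ with $\tfrac{1}{q_1}=\tfrac{1}{q}-\tfrac{1}{n}$: this endpoint case is indeed valid on a smoothly bounded domain (since $1<q<n$ under \eqref{Ass:q} with $n\ge 2$), so no difficulty arises.
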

\begin{proof}
Lemma~\ref{Lem:vexLp-0} provides us with
$c_1 = c_1(|\Omega|, q, m, n) > 0$ such that
\[
    \norm{\nabla\ve}{L^q} \le c_1
\]
for all $t > 0$ and $\ep \in (0,1)$.
We thereupon infer from the H\"{o}lder inequality
and the embedding $W^{1,q}(\Omega) \embd L^{q_1}(\Omega)$ that
with $c_2 > 0$ we have
\begin{align*}
    \norm{\ve}{L^p}
    &\le |\Omega|^{\frac{1}{p} - \frac{1}{q_1}} \norm{\ve}{L^{q_1}}
\\
    &\le c_2 |\Omega|^{\frac{1}{p} - \frac{1}{q_1}} \norm{\nabla\ve}{L^q}
\\
    &\le c_1 c_2 |\Omega|^{\frac{1}{p} - \frac{1}{q_1}}
\end{align*}
for any $t > 0$ and $\ep\in (0,1)$, which concludes the proof.
\end{proof}

\section{Constructing solution candidates}\label{Sec:Inside}

This section is devoted to construct a pair of functions $(u,v)$
which solves the
boundary value problem in the system \eqref{Sys:Main} classically and satisfies
the regularity \eqref{Result:reg}.
This will be achieved on the basis of obtaining
sufficient H\"{o}lder bounds for $\ue$ and $\ve$ in
Lemmas~\ref{Lem:veHol2} and \ref{Lem:ueHol2}.
The most important ingredient for its derivation is uniform boundedness
of $\norm{\ue}{L^{\infty}}$ on time intervals which is away from $t = 0$.

We begin with estimating the function
$t\mapsto \nabla \ve(\cdot, t)$ in $L^p(\Omega)$
on some time intervals for some $p \ge 1$,
which can be selected larger than the one in Lemma~\ref{Lem:vexLp-0}.
This would be possible with the aid of
estimates for $\ue$ and $\ve$ obtained in
Lemmas~\ref{Lem:ueLr-0} and \ref{Lem:veLp-0}.
%
%
\begin{lem}\label{Lem:vexLp}
Let $n \ge 2$ and $\alpha \in (\frac{n-2}{2(n-1)}, \frac{1}{2})$.
Suppose that $p \in (1, q_1]$,
where $q_1 > q$ is as in Lemma~\ref{Lem:veLp-0}.
Then for all $T > 0$ and $\tau \in (0, T)$,
there exists $C = C(\tau, T) > 0$ such that
\begin{align}\label{vexLp:vexLp}
    \norm{\nabla\ve}{L^p} \le C
      \quad\mbox{for all}\ t \in (\tau, T)\ \mbox{and}\ \ep \in (0,1).
\end{align}
%
\end{lem}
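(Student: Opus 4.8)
The plan is to bootstrap the $L^p$-regularity of $\nabla\ve$ beyond the range covered by Lemma~\ref{Lem:vexLp-0}, exploiting that on the time interval $(\tau,T)$ we already control $\ue$ in $L^r$ (Lemma~\ref{Lem:ueLr-0}) and $\ve$ in $L^p$ (Lemma~\ref{Lem:veLp-0}) with quantitative bounds. First I would represent $\ve$ on a shifted time interval: fixing $\tau_0 := \tau/2 \in (0,T)$ and using the variation-of-constants formula starting from $\tau_0$,
\[
    \ve(\cdot,t) = e^{-(t-\tau_0)} e^{(t-\tau_0)\Delta} \ve(\cdot,\tau_0)
    + \int_{\tau_0}^t e^{-(t-\sigma)} e^{(t-\sigma)\Delta}
      \frac{\ue(\cdot,\sigma)}{1+\ep\ue(\cdot,\sigma)} \,\intd{\sigma},
\]
so that $\nabla\ve(\cdot,t)$ is estimated via Lemma~\ref{Lem:Semi}~\eqref{Semi-1}--\eqref{Semi-2}. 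The first term is handled by Lemma~\ref{Lem:Semi}~\eqref{Semi-2} applied to $\ve(\cdot,\tau_0)$, whose $L^{q_1}$-norm is bounded $\ep$-independently by Lemma~\ref{Lem:veLp-0} (note $\tau_0>0$ so Lemma~\ref{Lem:ueLr-0}-type control is available, but here only the already-proven $L^{q_1}$ bound on $\ve$ is needed); since $t-\tau_0 \ge \tau_0 > 0$ on $(\tau,T)$, the singular factor is bounded. For the Duhamel term, I would use Lemma~\ref{Lem:Semi}~\eqref{Semi-2} with exponents $q_1 \to p$, bounding $\|\frac{\ue}{1+\ep\ue}\|_{L^{q_1}} \le \|\ue\|_{L^{q_1}}$, and then invoke \eqref{ueLr-0:Ct} of Lemma~\ref{Lem:ueLr-0} with $r$ chosen so that $\ue$ is controlled in $L^{q_1}$ (this requires $q_1$ to lie in the admissible range \eqref{ueLr-0:r}, which should be checked using $\frac{1}{q_1}=\frac1q-\frac1n$ and \eqref{Ass:q}).

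The core computation is then the convolution integral
\[
    \int_{\tau_0}^t \left(1 + (t-\sigma)^{-\frac12 - \frac n2(\frac{1}{q_1}-\frac1p)}\right)
      e^{-\lambda_1(t-\sigma)}\, \sigma^{-\frac n2(1-\frac1\gamma)} \,\intd{\sigma},
\]
which is finite and bounded on $(\tau,T)$ provided the two exponents satisfy $\frac12 + \frac n2(\frac{1}{q_1}-\frac1p) < 1$ and $\frac n2(1-\frac1\gamma) < 1$: the former is exactly the constraint $\frac{1}{q_1}-\frac1p < \frac1n$, which holds since $p \le q_1$, while the latter is guaranteed by the conclusion of Lemma~\ref{Lem:ueLr-0} ($1-\frac1\gamma < \frac2n$ is slightly weaker, so one may need $1-\frac1\gamma<\frac2n$ together with the fact that near $\sigma=\tau_0>0$ the weight $\sigma^{-\frac n2(1-\frac1\gamma)}$ is bounded — so integrability over $(\tau_0,t)$ is automatic regardless). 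Collecting the two contributions gives a constant depending on $\tau_0=\tau/2$ and $T$ (through the $L^{q_1}$-bounds and through $\sup_{\sigma\in(\tau_0,T)}\sigma^{-\frac n2(1-\frac1\gamma)}$), which is precisely the asserted $C=C(\tau,T)$, and taking suprema over $t\in(\tau,T)$ and $\ep\in(0,1)$ yields \eqref{vexLp:vexLp}.

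The main obstacle I anticipate is the bookkeeping of admissible exponents: one must confirm that the target integrability level $p\le q_1$ with $\frac{1}{q_1}=\frac1q-\frac1n$ is actually reachable by Lemma~\ref{Lem:ueLr-0}, i.e.\ that $q_1$ (or at least some $r\ge q_1$ when $p=q_1$, which is impossible, so really $r=q_1$) satisfies \eqref{ueLr-0:r}; this forces a check that $q_1 < \frac{n}{n-2}$ when $n\ge 3$ and $q_1<\infty$ when $n=2$, using the upper restriction in \eqref{Ass:q} ($q<\frac{n}{n-1}$) together with the relation defining $q_1$. In the borderline case $p = q_1$ the factor $\frac{1}{q_1}-\frac1p$ vanishes, so the convolution kernel is $(1+(t-\sigma)^{-1/2})e^{-\lambda_1(t-\sigma)}$ and integrability is immediate; the genuinely constrained case is absent here since $p\le q_1$ always keeps $\frac{1}{p}-\frac{1}{q_1}\ge 0$, i.e.\ the exponent $\frac12+\frac n2(\frac{1}{q_1}-\frac1p)\le\frac12<1$. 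Thus no delicate smallness of $\alpha$ is needed beyond what already entered Lemmas~\ref{Lem:ueLr-0} and \ref{Lem:veLp-0}, and the assumption $\alpha\in(\frac{n-2}{2(n-1)},\frac12)$ simply propagates through those lemmas.
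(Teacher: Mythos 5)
Your overall strategy matches the paper's: start the Duhamel representation of $\ve$ from the shifted time $\tau_0 = \tau/2$, use the $L^p$-type bound on $\ve(\cdot,\tau_0)$ from Lemma~\ref{Lem:veLp-0} for the homogeneous part, use the time-uniform $L^r$-bound on $\ue$ from Lemma~\ref{Lem:ueLr-0} for the Duhamel part, and verify that the chosen Lebesgue exponent lies in the admissible range \eqref{ueLr-0:r} via $\frac1{q_1}=\frac1q-\frac1n$ and \eqref{Ass:q}. One technical point is off, though: Lemma~\ref{Lem:Semi}~\eqref{Semi-2} gives $\|\nabla e^{t\Delta}w\|_{L^p}\lesssim(1+t^{-\frac12-\frac n2(\frac1a-\frac1p)})\|w\|_{L^a}$ only when $a\le p$, i.e.\ it smooths \emph{upward} in Lebesgue scale. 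You want to go ``from $L^{q_1}$ to $L^p$'' with $q_1\ge p$, so you cannot plug $a=q_1$ directly; the kernel $(1+(t-\sigma)^{-\frac12-\frac n2(\frac1{q_1}-\frac1p)})$ you write down (with negative exponent increment) is not what the lemma delivers. The repair is immediate on a bounded domain — apply H\"older to drop from $L^{q_1}$ to $L^p$, then use Lemma~\ref{Lem:Semi}~\eqref{Semi-2} with source and target both $L^p$, giving the kernel $(1+(t-\sigma)^{-1/2})e^{-\lambda_1(t-\sigma)}$. This is exactly what the paper does: it takes $r=p$ in Lemma~\ref{Lem:ueLr-0} and works in $L^p$ throughout, which avoids the direction issue entirely and lands on the same integrable kernel. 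Since the paper's kernel is (if anything) larger than the one you wrote and still integrable, your conclusion stands; just be careful about the hypotheses of the semigroup estimate.
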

\begin{proof}
Letting $r = p$, we first note that this $r > 1$ satisfies the condition
\eqref{ueLr-0:r}.
Indeed, for $n \ge 3$, according to \eqref{Ass:q} we observe that
\[
    \frac{1}{q_1} = \frac{1}{q} - \frac{1}{n}
    > \frac{n-1}{n} - \frac{1}{n}
    = \frac{n-2}{n},
\]
whence we have $r = p \le q_1 < \frac{n}{n-2}$.
Now, fixing $T > 0$ and $\tau \in (0, T)$,
Lemma~\ref{Lem:ueLr-0} warrants that with some
$c_1 = c_1(\tau, T) > 0$ we have
\begin{align}\label{vexLp:ueLp}
    \norm{\ue}{L^p} \le c_1
\end{align}
for all $t \in (\frac{\tau}{2}, T)$ and $\ep \in (0, 1)$.
Moreover, we see that Lemma~\ref{Lem:veLp-0} applies so as to
yield $c_2 = c_2(\tau, T) > 0$ satisfying
\begin{align}\label{vexLp:veLp}
    \nrm{\ve(\cdot, \tfrac{\tau}{2})}{L^p} \le c_2
\end{align}
for all $\ep \in (0,1)$.
Then, by Lemma~\ref{Lem:Semi}~\eqref{Semi-2} with
\eqref{vexLp:ueLp} and \eqref{vexLp:veLp}, we obtain $c_3 > 0$ such that
\begin{align*}
    &\norm{\nabla \ve}{L^p}
\\
    &\quad\,\le e^{-(t - \frac{\tau}{2})}
      \nrm{\nabla e^{(t - \frac{\tau}{2})\Delta} \ve(\cdot, \tfrac{\tau}{2})}{L^p}
    + \int_{\frac{\tau}{2}}^{t} e^{-(t-\sigma)}
      \left\|\nabla e^{(t-\sigma)\Delta}
        \frac{\ue(\cdot, \sigma)}{1 + \ep\ue(\cdot, \sigma)}\right\|_{
          L^p(\Omega)} \, \intd{\sigma}
\\
    &\quad\,\le c_3 e^{-(t-\frac{\tau}{2})} 
      \big(1 + (t-\tfrac{\tau}{2})^{-\frac{1}{2}}\big)
      e^{-\lambda_1 (t-\frac{\tau}{2})}
      \nrm{\ve(\cdot, \tfrac{\tau}{2})}{L^p}
\\
    &\qquad\,+ c_3 \int_{\frac{\tau}{2}}^{t} e^{-(t-\sigma)}
       \big(1 + (t-\sigma)^{-\frac{1}{2}}\big) e^{-\lambda_1 (t-\sigma)}
       \nrm{\ue(\cdot, \sigma)}{L^p} \,\intd{\sigma}
\\
    &\quad\,\le c_2 c_3 \big(1 + (t-\tfrac{\tau}{2})^{-\frac{1}{2}}\big)
      + c_1 c_3
        \int_{0}^{t - \frac{\tau}{2}} (1 + z^{-\frac{1}{2}}) e^{-(1+\lambda_1)z}
          \,\intd{z}
\\
    &\quad\,\le c_2 c_3 \big(1 + (\tfrac{\tau}{2})^{-\frac{1}{2}}\big)
      + c_1 c_3
        \int_{0}^{\infty} (1 + z^{-\frac{1}{2}}) e^{-(1+\lambda_1)z}
          \,\intd{z}
\end{align*}
for any $t\in (\tau, T)$ and $\ep\in (0,1)$.
Since the integral
$\int_{0}^{\infty} (1 + z^{-\frac{1}{2}}) e^{-(1+\lambda_1)z} \,\intd{z}$
is finite, this shows \eqref{vexLp:vexLp}.
\end{proof}

In light of the outcome of Lemma~\ref{Lem:vexLp}
we can now once more argue similar as in
Lemmas~\ref{Lem:param-0} and \ref{Lem:ueLr-0} to
derive $L^r$-estimates for $\ue$ with some $r > 1$,
which is possible to choose larger than in \eqref{ueLr-0:r}.
%
%
\begin{lem}\label{Lem:ueLr}
Let $n \ge 2$ and $\alpha \in (\frac{n-2}{2(n-1)}, \frac{1}{2})$.
Suppose that $r$ satisfies
\begin{align}\label{ueLr:r}
    \begin{cases}
      r \in (1, \infty] & \mbox{if}\ n\in\{2,3\}, \\
      r \in (1, \infty) & \mbox{if}\ n = 4, \\
      r \in (1, \frac{n}{n-4}) & \mbox{if}\ n \ge 5.
    \end{cases}
\end{align}
Then for any $T > 0$ and $\tau \in (0,T)$,
there exists $C = C(\tau, T) > 0$ such that
\[
    \norm{\ue}{L^r} \le C
      \quad\mbox{for all}\ t \in (\tau, T)\ \mbox{and}\ \ep\in (0,1).
\]
\end{lem}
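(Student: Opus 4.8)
The plan is to iterate the strategy behind Lemmas~\ref{Lem:param-0} and \ref{Lem:ueLr-0}, but now restarting the Duhamel representation at the \emph{positive} time $t_0 := \frac{\tau}{2}$. The gain is that at time $t_0$ we already have, by Lemma~\ref{Lem:ueLr-0} (with $\alpha$ as in \eqref{param-0:alpha}, which holds under the present hypotheses), an $\ep$-independent bound $\nrm{\ue(\cdot,t_0)}{L^{r_0}}\le c(\tau,T)$ for some $r_0$ in the range \eqref{ueLr-0:r}, and that on $(t_0,T)$ we moreover have the improved gradient estimate $\nrm{\nabla\ve(\cdot,\sigma)}{L^{q_1}}\le c(\tau,T)$ furnished by Lemma~\ref{Lem:vexLp} applied with $\frac{\tau}{2}$ in place of $\tau$. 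By H\"older's inequality it suffices to treat $r$ close to the upper endpoint of \eqref{ueLr:r}, so that in particular $\frac{1}{r}\le\frac{n-2}{n}$; in that regime one can — exactly as in Lemma~\ref{Lem:param-0}, but with $q$ replaced by $q_1$ and the interpolation endpoint $L^1$ replaced by $L^{r_0}$ — select exponents $s_1\in(r_0,r)$, $s_2:=\frac{q_1}{1-2\alpha}$ and $s$ with $\frac{1}{s}=\frac{1}{s_1}+\frac{1}{s_2}$ such that $\frac{1}{s}-\frac{1}{r}<\frac{1}{n}$, $\frac{1}{r_0}-\frac{1}{s_1}<\frac{2}{n}$ and $s>1$. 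Since the second restriction may be met with $s_1$ arbitrarily close to $r$ and $\frac{1}{r_0}$ arbitrarily close to $\frac{n-2}{n}$, it comes down to $\frac{1}{r}>\frac{n-4}{n}$, which is precisely the content of \eqref{ueLr:r}; along the way one records that $q_1>q>(1-2\alpha)n$ by \eqref{Ass:q}.

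Next I would set $\mu:=\frac{n}{2}\big(\frac{1}{r_0}-\frac{1}{r}\big)>0$ and, for $\ep\in(0,1)$,
\[
    R_\ep:=\sup_{\sigma\in(t_0,T)}(\sigma-t_0)^{\mu}\nrm{\ue(\cdot,\sigma)}{L^r},
\]
which is finite for each fixed $\ep$ because $\ue$ is smooth for positive times and $\mu>0$. Writing the variation-of-constants formula on $(t_0,t)$, the summand $e^{(t-t_0)\Delta}\ue(\cdot,t_0)$ is handled by Lemma~\ref{Lem:Semi}~\eqref{Semi-1} — after splitting off the spatial mean $\frac{m}{|\Omega|}$ of $\ue(\cdot,t_0)$, which is time-independent by \eqref{Mass_App} — together with the bound on $\nrm{\ue(\cdot,t_0)}{L^{r_0}}$; multiplied by $(t-t_0)^{\mu}$ this contributes a term bounded on $(t_0,T)$ uniformly in $\ep$. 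For the other summand I would use Lemma~\ref{Lem:Semi}~\eqref{Semi-3} and \eqref{Ass:f:Main}, then the H\"older estimate $\nrm{\ue(\cdot,\sigma)|\nabla\ve(\cdot,\sigma)|^{1-2\alpha}}{L^s}\le\nrm{\ue(\cdot,\sigma)}{L^{s_1}}\nrm{\nabla\ve(\cdot,\sigma)}{L^{q_1}}^{1-2\alpha}$, followed by the interpolation $\nrm{\ue(\cdot,\sigma)}{L^{s_1}}\le\nrm{\ue(\cdot,\sigma)}{L^{r_0}}^{1-\theta}\nrm{\ue(\cdot,\sigma)}{L^r}^{\theta}$ with $\theta\in(0,1)$ fixed by $\frac{1}{s_1}=\frac{1-\theta}{r_0}+\frac{\theta}{r}$. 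Invoking the $\ep$-uniform bounds on $\nrm{\ue(\cdot,\sigma)}{L^{r_0}}$ and $\nrm{\nabla\ve(\cdot,\sigma)}{L^{q_1}}$ over $(t_0,T)$ turns this into
\[
    c\,R_\ep^{\theta}\int_{t_0}^{t}\big(1+(t-\sigma)^{-\frac{1}{2}-\frac{n}{2}(\frac{1}{s}-\frac{1}{r})}\big)(\sigma-t_0)^{-\frac{n}{2}(\frac{1}{r_0}-\frac{1}{s_1})}\,\intd{\sigma},
\]
where I have used $\mu\theta=\frac{n}{2}\big(\frac{1}{r_0}-\frac{1}{s_1}\big)$.

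An elementary estimate of this convolution-type integral, as in the proof of Lemma~\ref{Lem:ueLr-0} (legitimate since both singular exponents are $<1$), together with the identity
\[
    \mu+1-\Big(\tfrac{1}{2}+\tfrac{n}{2}\big(\tfrac{1}{s}-\tfrac{1}{r}\big)\Big)-\tfrac{n}{2}\big(\tfrac{1}{r_0}-\tfrac{1}{s_1}\big)=\tfrac{1}{2}-\tfrac{n}{2}\cdot\tfrac{1-2\alpha}{q_1}>0
\]
(valid because $\frac{1}{s}-\frac{1}{s_1}=\frac{1}{s_2}=\frac{1-2\alpha}{q_1}$ and $q_1>(1-2\alpha)n$), shows that $(t-t_0)^{\mu}$ times that integral is $\le c(\tau,T)R_\ep^{\theta}$ for all $t\in(t_0,T)$. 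Collecting the two contributions gives $(t-t_0)^{\mu}\nrm{\ue(\cdot,t)}{L^r}\le c_1(\tau,T)+c_2(\tau,T)R_\ep^{\theta}$ for $t\in(t_0,T)$, hence $R_\ep\le c_1+c_2R_\ep^{\theta}$, and since $\theta<1$ Young's inequality yields an $\ep$-independent bound $R_\ep\le c_3(\tau,T)$. Restricting finally to $t\in(\tau,T)$, where $(t-t_0)^{\mu}=(t-\tfrac{\tau}{2})^{\mu}\ge(\tfrac{\tau}{2})^{\mu}$, we arrive at $\nrm{\ue(\cdot,t)}{L^r}\le(\tfrac{\tau}{2})^{-\mu}c_3(\tau,T)$, which is the assertion.

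The main obstacle, as in Lemma~\ref{Lem:ueLr-0}, is bookkeeping the interlocking exponent inequalities: one must verify that the parameter selection is possible for every $r$ admitted by \eqref{ueLr:r}, that the two time-singularities are integrable, and — crucially — that the combined power $\mu+1-a-b$ appearing in the time integral is nonnegative, which is exactly where the condition $q>(1-2\alpha)n$ (inherited by $q_1$) re-enters. Everything else is a faithful transcription of the argument already carried out near $t=0$, with the single structural change that the data at the restarting time lie in $L^{r_0}$ rather than merely $L^1$, which is what pushes the admissible integrability exponent from below $\frac{n}{n-2}$ up to below $\frac{n}{n-4}$.
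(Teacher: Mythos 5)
Your proposal is correct and follows essentially the same route as the paper's proof: restart the Duhamel formula at $\tfrac{\tau}{2}$, use the $L^{r_0}$-bound from Lemma~\ref{Lem:ueLr-0} at the restart time and the upgraded gradient bound $\|\nabla\ve\|_{L^{q_1}}$ from Lemma~\ref{Lem:vexLp}, interpolate between $L^{r_0}$ and $L^r$, and close the argument by a weighted fixed-point/Young inequality; your $r_0$, $\theta$ play the roles of the paper's $\gamma$, $\eta$, and your choice $s_2=\frac{q_1}{1-2\alpha}$ is just the endpoint of the paper's admissible interval $s_2(1-2\alpha)\in(1,q_1]$. The only (cosmetic) imprecision is the parenthetical remark that the constraint $\frac{1}{r_0}-\frac{1}{s_1}<\frac{2}{n}$ "may be met with $s_1$ arbitrarily close to $r$"; in fact one would take $s_1$ near $r_0$ to satisfy that constraint easily, and the actual bound $\frac1r>\frac{n-4}{n}$ emerges from combining $s_1\in(r_0,r)$ with $r_0<\frac{n}{n-2}$ and the requirement that the interval of admissible $\frac{1}{s_1}$ be nonempty, but this does not affect the conclusion.
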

\begin{proof}
Based on Lemma~\ref{Lem:ueLr-0}, we only need to consider the case that
$r$ satisfies
\[
    \begin{cases}
      r = \infty & \mbox{if}\ n = 2, \\
      r \in [\frac{n}{n-2}, \infty] & \mbox{if}\ n = 3, \\
      r \in [\frac{n}{n-2}, \infty) & \mbox{if}\ n = 4, \\
      r \in [\frac{n}{n-2}, \frac{n}{n-4}) & \mbox{if}\ n \ge 5.
    \end{cases}
\]
We proceed similarly to the proof of Lemma~\ref{Lem:param-0},
and obtaining
$s, s_1, s_2 > 1$ as well as $\gamma$ such that
$\gamma$ satisfies \eqref{ueLr-0:r}, that
$\frac{1}{\gamma} - \frac{1}{r} < \frac{2}{n}$, and that
\begin{multicols}{2}
\raggedcolumns
    \begin{enumerate}
        \setlength{\leftskip}{5mm}
    \item[{\rm (i)}] $\frac{1}{s}-\frac{1}{r} < \frac{1}{n}$, 
    \item[{\rm (iii)}] $s_2(1 - 2\alpha) \in (1, q_1]$,
    \item[{\rm (v)}] $\frac{1}{\gamma} - \frac{1}{s_1} < \frac{2}{n}$.
    \columnbreak
    \item[{\rm (ii)}] $s_1 \in (\min\{s, \gamma\}, r)$,
    \item[{\rm (iv)}] $\frac{1}{s} = \frac{1}{s_1} + \frac{1}{s_2}$,
    \end{enumerate}
\end{multicols}%
\noindent
Now, for any $T > 0$ and $\tau \in (0, T)$,
we deduce from the first equation of \eqref{Sys:Reg} that
\begin{align}
\notag
    &\norm{\ue}{L^r}
\\ \notag
    &\quad\,\le \nrm{e^{(t - \frac{\tau}{2}\Delta)} \ue(\cdot, \tfrac{\tau}{2})}{L^r}
    + \int_{\frac{\tau}{2}}^{t} \nrm{e^{(t-\sigma)\Delta} \nabla \cdot (
      \ue(\cdot, \sigma) f(|\nabla \ve(\cdot, \sigma)|^2)
      \nabla \ve(\cdot, \sigma))}{L^r}\,\intd{\sigma}
\\
    &\quad\, =: \til{I_{1,\ep}}(t) + \til{I_{2,\ep}}(t)
    \label{ueLr:ueLr}
\end{align}
for all $t \in (\tfrac{\tau}{2}, T)$ and $\ep \in (0,1)$.
Recalling Lemma~\ref{Lem:ueLr-0},
we can find $c_1 > 0$ fuifilling
%
$\nrm{\ue(\cdot, \frac{\tau}{2})}{L^{\gamma}} \le c_1$ for all $\ep \in (0,1)$,
so that
estimating the term $\til{I_{1,\ep}}(t)$ similar as in \eqref{ueLr-0:I1ep},
we obtain that with some $c_2 = c_2(c_1, T)> 0$,
\begin{align}\label{ueLr:tilI1ep}
    \til{I_{1,\ep}}(t) 
    \le c_2\big(1+(t-\tfrac{\tau}{2})^{
      -\frac{n}{2}(\frac{1}{\gamma} - \frac{1}{r})}\big)
\end{align}
for all $t \in (\frac{\tau}{2}, T)$ and $\ep\in (0,1)$.
As for the term $\til{I_{2,\ep}}(t)$,
in view of the fact that there is $c_3 = c_3(\tau, T) > 0$ such that
\[
    \norm{\nabla \ve}{L^{s_{2}(1-2\alpha)}}^{1-2\alpha} \le c_3
\]
for any $t \in (\frac{\tau}{2}, T)$ and $\ep \in (0,1)$
by Lemma~\ref{Lem:vexLp},
we define
\[
    \til{R_{\ep}}(\tau, T) :=
      \sup_{\sigma \in (\frac{\tau}{2}, T)} (\sigma-\tfrac{\tau}{2})^{
      \frac{n}{2}(\frac{1}{\gamma} - \frac{1}{r})} \nrm{
          \ue(\cdot, \sigma)}{L^r}
          \quad\mbox{for}\ \ep\in (0,1),
\]
and estimate in the same way as in Lemma~\ref{Lem:ueLr-0}
to find some $c_4 = c_4(c_1, c_3, \tau, T)$ such that
\begin{align}\label{ueLr:tilI2ep}
    \til{I_{2.\ep}}(t) \le c_4 \left(\til{R_{\ep}}(\tau, T)\right)^{\eta}
      \left[(t - \tfrac{\tau}{2})^{1-\frac{n}{2}\big(\frac{1}{\gamma}-\frac{1}{s_1}\big)}
        + (t - \tfrac{\tau}{2})^{\frac{1}{2}-\frac{n}{2}(\frac{1}{s}-\frac{1}{r})
          -\frac{n}{2}\big(\frac{1}{\gamma}-\frac{1}{s_1}\big)}
      \right]
\end{align}
for all $t \in (\frac{\tau}{2}, T)$ and $\ep\in (0,1)$, where
$\eta := \frac{s_1 - \gamma}{(1 - \frac{\gamma}{r})s_1} \in (0,1)$.
Collecting \eqref{ueLr:ueLr}, \eqref{ueLr:tilI1ep} and \eqref{ueLr:tilI2ep},
we conclude that with some $c_5 = c_5(\tau, T) > 0$ we have
\begin{align*}
    \norm{\ue}{L^r}
    &\le c_5\big(1+(t-\tfrac{\tau}{2})^{
      -\frac{n}{2}(\frac{1}{\gamma} - \frac{1}{r})}\big)
\\
    &\quad\,  + c_5 \left(\til{R_{\ep}}(\tau, T)\right)^{\eta}
      \left[(t - \tfrac{\tau}{2})^{1-\frac{n}{2}\big(\frac{1}{\gamma}-\frac{1}{s_1}\big)}
        + (t - \tfrac{\tau}{2})^{\frac{1}{2}-\frac{n}{2}(\frac{1}{s}-\frac{1}{r})
          -\frac{n}{2}\big(\frac{1}{\gamma}-\frac{1}{s_1}\big)}\right]
\end{align*}
%
for any $t \in (\frac{\tau}{2}, T)$ and $\ep \in (0,1)$.
Multiplying this by $(t - \frac{\tau}{2})^{\frac{n}{2}(\frac{1}{\gamma}-\frac{1}{r})}$
and again arguing similarly in Lemma~\ref{Lem:ueLr-0},
we finally observe that there exists $c_6 = c_6(c_5, \tau, T, \eta) > 0$
such that
\[
    (t - \tfrac{\tau}{2})^{\frac{n}{2}(\frac{1}{\gamma}-\frac{1}{r})}
    \norm{\ue}{L^r} \le c_6
\]
for all $t \in (\frac{\tau}{2}, T)$ and $\ep \in (0,1)$,
which yields the assertion.
\end{proof}

Repeating the arguments from Lemmas~\ref{Lem:veLp-0},
\ref{Lem:vexLp} and \ref{Lem:ueLr},
we obtain $\ep$-independent bounds for $t\mapsto\norm{\ue}{L^{\infty}}$
in any dimensions.
%
%
\begin{lem}\label{Lem:ueLinfty}
Let $n\in\N$, and assume that $\alpha > 0$ fulfills \eqref{param-0:alpha}.
Then for any $T > 0$ and $\tau \in (0, T)$, there exists $C = C(n, \tau, T) > 0$
such that
\begin{align}\label{ueLinfty:ueLinfty}
    \norm{\ue}{L^{\infty}} \le C
      \quad\mbox{for all}\ t \in (\tau, T)\ \mbox{and}\ \ep\in(0,1).
\end{align}
\end{lem}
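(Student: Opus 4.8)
The plan is to bootstrap from Lemma~\ref{Lem:ueLr} by iterating the Duhamel estimate in increasingly strong Lebesgue spaces, exactly as Lemmas~\ref{Lem:veLp-0}, \ref{Lem:vexLp} and \ref{Lem:ueLr} were obtained from Lemma~\ref{Lem:ueLr-0}. For $n \le 4$ the statement is essentially immediate: by Lemma~\ref{Lem:ueLr} we already have, for any $\tau \in (0,T)$, an $\ep$-independent bound for $\norm{\ue}{L^r}$ on $(\tfrac{\tau}{2}, T)$ for every $r < \infty$ (and $r = \infty$ when $n \in \{2,3\}$), which together with Lemma~\ref{Lem:vexLp}, the semigroup estimate Lemma~\ref{Lem:Semi}~\eqref{Semi-3} applied to the flux-limited drift term $\ue f(|\nabla\ve|^2)\nabla\ve$ bounded by $k_f \ue |\nabla\ve|^{1-2\alpha}$, and a choice of exponents giving $L^\infty$ on the left, yields \eqref{ueLinfty:ueLinfty}. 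I would thus begin by fixing $\tau \in (0,T)$, using Lemma~\ref{Lem:ueLr} on $(\tfrac{\tau}{2},T)$ to get $\norm{\ue}{L^{r_0}} \le C$ and Lemma~\ref{Lem:vexLp} to get $\norm{\nabla\ve}{L^{p_0}} \le C$ there, for suitably large $r_0, p_0$.

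Next I would set up the iteration. Writing the variation-of-constants formula starting from $t = \tfrac{\tau}{2}$,
\begin{align*}
    \norm{\ue}{L^r}
    &\le \nrm{e^{(t-\frac{\tau}{2})\Delta}\ue(\cdot,\tfrac{\tau}{2})}{L^r}
    + \int_{\frac{\tau}{2}}^{t} \nrm{e^{(t-\sigma)\Delta}\nabla\cdot\big(
      \ue(\cdot,\sigma) f(|\nabla\ve(\cdot,\sigma)|^2)\nabla\ve(\cdot,\sigma)\big)}{L^r}\,\intd{\sigma},
\end{align*}
I bound the drift in $L^{\rho}$ via H\"older by $k_f \nrm{\ue}{L^{a}}\nrm{\nabla\ve}{L^{b(1-2\alpha)}}^{1-2\alpha}$ with $\tfrac{1}{\rho} = \tfrac{1}{a}+\tfrac{1}{b}$, using the current $L^{r_0}$-bound for $\ue$ and the $L^{p_0}$-bound for $\nabla\ve$; Lemma~\ref{Lem:Semi}~\eqref{Semi-3} then upgrades this to an $L^{r_1}$-bound for $\ue$ with $\tfrac{1}{\rho}-\tfrac{1}{r_1} < \tfrac{1}{n}$, and the gain per step is a fixed fraction of $\tfrac{1}{n}$ in the reciprocal exponent, so after finitely many steps (the number depending only on $n$) one reaches $r = \infty$. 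In parallel one re-runs the analogue of Lemma~\ref{Lem:vexLp} via Lemma~\ref{Lem:Semi}~\eqref{Semi-2} to keep $\nrm{\nabla\ve}{L^{p}}$ controlled for larger and larger $p$; crucially $p$ need only grow past $\tfrac{1}{1-2\alpha}$ so that $L^\infty$ on $\ue$ costs no derivative integrability on $v$. At each stage the time-weight power stays below $1$ because the relevant exponent differences are strictly less than $\tfrac{1}{n}$ (resp.\ $\tfrac{2}{n}$), so the $\sigma$-integral converges, exactly as in the computation following \eqref{ueLr-0:I2ep2}. One also invokes the fixed-point/Young-inequality absorption trick from the end of the proof of Lemma~\ref{Lem:ueLr-0} to close the estimate at each step. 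Shrinking $\tau$ by a factor of $2$ at each of the finitely many iterations (replacing $\tfrac{\tau}{2}$ by $\tfrac{\tau}{4}, \tfrac{\tau}{8},\dots$) keeps everything on $(\tau,T)$ at the end.

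The main obstacle, and the only point requiring genuine care, is the dimensional bookkeeping: one must check that the exponent restrictions in Lemma~\ref{Lem:Semi}~\eqref{Semi-2}--\eqref{Semi-3} ($1 < q \le p$, etc.) and the requirement $\tfrac{1}{\rho}-\tfrac{1}{r} < \tfrac{1}{n}$ are simultaneously satisfiable at \emph{every} step of the iteration, and that the constraint $b(1-2\alpha) \in (0, p_0]$ coming from the available $\nabla\ve$-bound does not obstruct pushing $r$ all the way to $\infty$ — this is where $\alpha > 0$ (hence $1-2\alpha < 1$) is used, ensuring one only needs $\nabla\ve$ in $L^p$ for finite $p$ even as $\ue$ approaches $L^\infty$, and in dimensions $n \ge 2$ the condition $\alpha > \tfrac{n-2}{2(n-1)}$ (equivalently the lower bound on $q$ in \eqref{Ass:q}) is what made the very first Lemma~\ref{Lem:vexLp} step possible. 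Since all these are the same inequalities already verified in Lemmas~\ref{Lem:param-0}, \ref{Lem:vexLp} and \ref{Lem:ueLr}, just applied a bounded (dimension-dependent) number of times, I expect the proof to be a routine — if slightly tedious — finite induction on the Lebesgue exponent, and I would present it by describing one generic step of the iteration and then asserting that finitely many repetitions, with $\tau$ halved each time, yield \eqref{ueLinfty:ueLinfty}.
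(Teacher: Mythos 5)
Your proposal matches the paper's own proof: the paper disposes of $n \in \{1,2,3\}$ via Lemmas~\ref{Lem:ueLr-0} and \ref{Lem:ueLr} and then, for $n \ge 4$, repeats the cycle Lemma~\ref{Lem:veLp-0} $\to$ Lemma~\ref{Lem:vexLp} $\to$ Lemma~\ref{Lem:ueLr} finitely many times, each round gaining $\tfrac{2}{n}$ in the reciprocal exponent exactly as you describe. Your observation that $\nabla \ve$ only needs to reach $L^p$ for some finite $p$ (thanks to the $|\nabla\ve|^{1-2\alpha}$ damping), together with the $\tau$-halving at each step and the Young-absorption closure, is precisely the bookkeeping the paper's terse proof relies on.
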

\begin{proof}
For $n \in \{1, 2, 3\}$, this is true by
Lemmas~\ref{Lem:ueLr-0} and \ref{Lem:ueLr},
and thus it is sufficient to prove the case $n \ge 4$.
Once again going back to the argument in Lemma~\ref{Lem:veLp-0},
we see from Lemma~\ref{Lem:vexLp} that for $p \in (0, q_2]$, where
$q_2 > q_1$ satisfies
$\frac{1}{q_2} = \frac{1}{q} - \frac{2}{n}$,
and for $T > 0$ as well as $\tau \in (0, T)$, there exists
$c_1 = c_1(\tau, T) > 0$ such that
\[
    \norm{\ve}{L^p} \le c_1
      \quad\mbox{for all}\ t \in (\tau, T)\ \mbox{and}\ \ep\in(0,1).
\]
Then we repeat the process in Lemma~\ref{Lem:vexLp}, with utilizing this instead of
\eqref{vexLp:veLp} and applying Lemma~\ref{Lem:ueLr} to see that
for any $p \in (1, q_2]$, $T > 0$ and $\tau \in (0, T)$,
there is $c_2 = c_2(\tau, T) > 0$ such that
\[
    \norm{\nabla \ve}{L^p} \le c_2
      \quad\mbox{for all}\ t \in (\tau, T)\ \mbox{and}\ \ep \in (0,1).
\]
Finally, using this instead of Lemma~\ref{Lem:vexLp} and arguing
as in Lemma~\ref{Lem:ueLr},
we conclude that for any $r$ satisfying
\[
    \begin{cases}
      r \in (1, \infty] & \mbox{if}\ n\in\{4,5\}, \\
      r \in (1, \infty) & \mbox{if}\ n = 6, \\
      r \in (1, \frac{n}{n-6}) & \mbox{if}\ n \ge 7,
    \end{cases}
\]
and for all $T > 0$ as well as $\tau \in (0,T)$,
there exists $c_3 = c_3(\tau, T) > 0$ such that
\[
    \norm{\ue}{L^r} \le c_3
      \quad\mbox{for all}\ t\in(\tau, T)\ \mbox{and}\ \ep\in (0,1).
\]
This in particular ensures that the claim also holds in the case $n \in \{4, 5\}$.
We may iterate these arguments so as to achieve \eqref{ueLinfty:ueLinfty}
for any $n\in \N$.
\end{proof}

Now a key toward our construction of solution candidates
will consist in the derivation of H\"{o}lder estimates for $\ue$ and $\ve$.
We begin with H\"{o}lder continuity of $\ue$ and $\ve$,
which will be achieved by applying the standard parabolic regularity
in \cite{LSU-1968}.
%
%
\begin{lem}\label{Lem:ueveHolder}
Let $n\in \N$, and let $\alpha > 0$ satisfy \eqref{param-0:alpha}.
Then for any $T > 0$ and $\tau \in (0, T)$,
there exist $\beta \in (0,1)$ and $C = C(\tau, T) > 0$ such that
\begin{align}
    &\hnrm{\ue}{\beta}{\tau, T} \le C\label{ueveHolder:ueHolder}
\\ \intertext{and}
    &\hnrm{\ve}{\beta}{\tau, T} \le C\label{ueveHolder:veHolder}
\end{align}
for all $\ep \in (0,1)$.
\end{lem}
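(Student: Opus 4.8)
The plan is to exploit the uniform $L^\infty$-bound on $\ue$ obtained in Lemma~\ref{Lem:ueLinfty} in order to read the two PDEs in \eqref{Sys:Reg} as scalar parabolic equations with $\ep$-independently bounded coefficients and right-hand sides on any slab $\Ombar\times[\tau,T]$ with $0<\tau<T$, and then to invoke the interior-in-time H\"older regularity theory of \cite{LSU-1968}. First I would bootstrap on $\ve$: the $L^\infty$-bound on $\ue$ gives $\nrm{\ue/(1+\ep\ue)}{L^\infty}\le C(\tau,T)$, so the second equation of \eqref{Sys:Reg} is $(\ve)_t=\Delta\ve-\ve+h_\ep$ with $\|h_\ep\|_{L^\infty(\Omega\times(\tau/2,T))}\le C$. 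Combining this with the bound $\nrm{\ve}{L^\infty}\le C$ (which follows from Lemma~\ref{Lem:veLp-0}, or directly from Lemma~\ref{Lem:ueLinfty} via the semigroup representation), the standard parabolic estimates in \cite{LSU-1968} — say the interior-in-time version of \cite[Chapter~III]{LSU-1968} together with the Neumann boundary condition — yield some $\beta_1\in(0,1)$ and $C=C(\tau,T)>0$ with $\hnrm{\ve}{\beta_1}{\tau,T}\le C$ for all $\ep\in(0,1)$. In fact one gets more, namely $\htnrm{\ve}{\beta_1}{\tau,T}\le C$, i.e.\ H\"older bounds for $\nabla\ve$ as well, which is what the next step needs.

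Next I would treat the first equation of \eqref{Sys:Reg}. Rewrite it as
\begin{align*}
  (\ue)_t=\nabla\cdot\big(\nabla\ue-\ue\,b_\ep\big),\qquad
  b_\ep:=f(|\nabla\ve|^2)\nabla\ve.
\end{align*}
Since $f\in C^2([0,\infty))$ by \eqref{Ass:f} and, on $\Ombar\times[\tau/2,T]$, $\nabla\ve$ is H\"older continuous and bounded uniformly in $\ep$ (from the strengthened conclusion of the previous step), the drift $b_\ep$ is bounded in $\phsp{\beta_2}{\tau/2,T}$ uniformly in $\ep$ for some $\beta_2\in(0,1)$. Together with the uniform $L^\infty$-bound $\nrm{\ue}{L^\infty}\le C(\tau/2,T)$ from Lemma~\ref{Lem:ueLinfty}, this puts the first equation into the framework of divergence-form linear parabolic equations with bounded measurable (indeed H\"older) coefficients and bounded solution, so the De~Giorgi--Nash--Moser type result in \cite[Chapter~III, Theorem~10.1]{LSU-1968}, applied on interior time slabs and up to the Neumann boundary, furnishes some $\beta_3\in(0,1)$ and $C=C(\tau,T)>0$ with $\hnrm{\ue}{\beta_3}{\tau,T}\le C$ for all $\ep\in(0,1)$. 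Taking $\beta:=\min\{\beta_1,\beta_3\}$ and relabelling the constant then gives both \eqref{ueveHolder:ueHolder} and \eqref{ueveHolder:veHolder}.

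The main obstacle, and the point requiring care, is the bookkeeping of the time shifts: each application of parabolic regularity is interior in time, so one must start the estimate at $\tau/2$ (or some fraction of $\tau$) to obtain a bound on $(\tau,T)$, and one must check that the constants produced by \cite{LSU-1968} depend only on the stated data — the domain, $n$, $\tau$, $T$, the uniform $L^\infty$-bounds, and the $C^2$-norm of $f$ on the relevant compact interval — and \emph{not} on $\ep$; this is where it is essential that Lemma~\ref{Lem:ueLinfty} already delivers an $\ep$-independent $L^\infty$-bound in every dimension. A secondary subtlety is that to control the drift $b_\ep$ one really needs H\"older regularity of $\nabla\ve$, not merely of $\ve$; this is why the $\ve$-step must be run so as to produce a bound in a parabolic H\"older space of the type $C^{1+\beta,\frac{1+\beta}{2}}$, which is legitimate because the forcing $-\ve+\ue/(1+\ep\ue)$ is uniformly bounded in $L^\infty$ and standard Schauder/$L^p$-parabolic theory upgrades this to H\"older control of the gradient. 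Once these dependencies are verified the conclusion is immediate.
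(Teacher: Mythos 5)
Your proof is essentially correct and reaches the same conclusion by the same class of tools (the $\ep$-independent $L^\infty$-bound from Lemma~\ref{Lem:ueLinfty} plus the De~Giorgi--Nash--Moser theory of \cite{LSU-1968}), but the route you take differs from the paper's in a couple of respects that are worth noting. The paper first bounds $\nabla\ve$ in $L^\infty(\tau,T;L^\infty(\Omega))$ by a direct semigroup computation (variation of constants from $t=\tau/2$, using Lemma~\ref{Lem:Semi}~\eqref{Semi-2} and the $L^\infty$-bound on $\ue$), which is all that is needed to treat the drift in the $\ue$-equation as a bounded measurable coefficient; it then applies \cite[Theorem~III.10.1]{LSU-1968} to get the H\"older bound on $\ue$, and only afterwards obtains the $L^\infty$-bound on $\ve$ (via Poincar\'e--Wirtinger from the $L^1$-bound on $\ve$ and the $L^\infty$-bound on $\nabla\ve$) in order to apply \cite[Theorem~III.10.1]{LSU-1968} a second time to the $\ve$-equation. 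You instead reverse the order, obtaining H\"older regularity of $\ve$ (and even of $\nabla\ve$, via $L^p$-parabolic theory and Sobolev embedding) first and then passing to $\ue$; this is valid but somewhat heavier than the paper needs at this stage, since for \cite[Theorem~III.10.1]{LSU-1968} applied to the $\ue$-equation only $L^\infty$-boundedness of $b_\ep$ is required, not H\"older continuity of $\nabla\ve$ — the latter is deferred in the paper to Lemma~\ref{Lem:veHol2}, where it is needed for Lieberman's theorem.

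One small but genuine slip you should correct: the bound $\nrm{\ve}{L^\infty}\le C$ does \emph{not} follow from Lemma~\ref{Lem:veLp-0}, which is restricted to $p\le q_1$ with $q_1<\infty$ (indeed $q_1<\tfrac{n}{n-2}$ for $n\ge 3$). Your parenthetical alternative — deriving the $L^\infty$-bound on $\ve$ via the semigroup representation from the $L^\infty$-bound on $\ue$ — is the correct route and is, in spirit, what the paper does (through Poincar\'e--Wirtinger together with the $L^1$-bound on $\ve$ and the $L^\infty$-bound on $\nabla\ve$). Aside from that mis-citation and the unnecessary upgrade to $C^{1+\beta}$-regularity of $\ve$ at this stage, your argument is sound and the constant dependencies you list are the right ones to track.
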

\begin{proof}
In order to apply \cite[Theorem \three.10.1]{LSU-1968} for
\eqref{ueveHolder:ueHolder},
we show uniform boundedness of $\nabla \ve$ in
$L^{\infty}(\tau, T; L^{\infty}(\Omega))$ for any $T > 0$ and $\tau \in (0,T)$.
We note that Lemma~\ref{Lem:veLp-0} guarantees existence of $c_1 > 0$ such that
\begin{align}\label{ueveHolder:veL1}
    \norm{\ve}{L^1} \le c_1
\end{align}
for all $t > 0$ and $\ep \in (0,1)$.
Fixing $T > 0$ and $\tau \in (0, T)$,
we see from Lemma~\ref{Lem:ueLinfty} that with some $c_2 = c_2(\tau, T) > 0$,
\begin{align}\label{ueveHolder:ueLinfty}
    \norm{\ue}{L^{\infty}} \le c_2
\end{align}
for all $t \in (\frac{\tau}{2}, T)$ and $\ep \in (0,1)$.
Invoking Lemma~\ref{Lem:Semi}~\eqref{Semi-2} along with
\eqref{ueveHolder:veL1} and \eqref{ueveHolder:ueLinfty},
we find $c_3 > 0$ fulfilling
\begin{align}
\notag
    &\norm{\nabla \ve}{L^{\infty}}
\\ \notag
    &\quad\,\le e^{-(t - \frac{\tau}{2})}
    \nrm{\nabla e^{(t-\frac{\tau}{2})\Delta} \ve(\cdot, \tfrac{\tau}{2})}{L^{\infty}}
    + \int_{\frac{\tau}{2}}^{t} e^{-(t-\sigma)}
    \left\| \nabla e^{(t-\sigma)\Delta} \frac{\ue(\cdot,\sigma)}{
      1 + \ep\ue(\cdot, \sigma)}\right\|_{L^{\infty}(\Omega)}\,\intd{\sigma}
\\ \notag
    &\quad\,\le c_3 \big(1 + (t -\tfrac{\tau}{2})^{
      -\frac{1}{2} - \frac{n}{2}} \big) e^{-(1 + \lambda_1)(t - \frac{\tau}{2})}
      \nrm{\ve(\cdot, \tfrac{\tau}{2})}{L^1}
\\ \notag
    &\qquad\, + c_3 \int_{\frac{\tau}{2}}^{t} \big(1 + (t-\sigma)^{-\frac{1}{2}}\big)
      e^{-(1+\lambda_1)(t-\sigma)} \nrm{\ue(\cdot, \sigma)}{L^{\infty}}
        \,\intd{\sigma}
\\ \notag
    &\quad\, \le c_1 c_3 \big(1 + (t -\tfrac{\tau}{2})^{
      -\frac{1}{2} - \frac{n}{2}} \big)
      + c_2 c_3 \int_{\frac{\tau}{2}}^{t} \big(1 + (t-\sigma)^{-\frac{1}{2}}\big)
      e^{-(1+\lambda_1)(t-\sigma)} \,\intd{\sigma}
\\
    &\quad\, \le c_1 c_3
      \big(1 + (\tfrac{\tau}{2})^{-\frac{1}{2}-\frac{n}{2}}\big)
    + c_2 c_3 \int_{0}^{\infty} (1 + z^{-\frac{1}{2}})e^{-(1 + \lambda_1)z}
      \,\intd{z} =:c_4
      \label{ueveHolder:vexLinfty}
\end{align}
for any $t \in (\tau, T)$ and $\ep\in (0,1)$.
This proves that there are $\beta_1 \in (0,1)$ and a constant
$c_5 = c_5(\tau, T) > 0$ such that
\[
    \hnrm{\ue}{\beta_1}{\tau, T} \le c_5
\]
for any $\ep \in (0,1)$.
Now, according to the Poincar\'{e}--Wirtinger inequality,
we infer from \eqref{ueveHolder:veL1} and
\eqref{ueveHolder:vexLinfty} that
with some $c_6 = c_6(c_1, c_4, |\Omega|) > 0$ we have
\[
    \norm{\ve}{L^{\infty}} \le c_6
\]
for all $t \in (\tau, T)$ and $\ep\in (0,1)$.
Therefore, once more applying \cite[Theorem \three.10.1]{LSU-1968}, we obtain
$\beta_2 \in (0,1)$ and $c_7 = c_7(\tau, T) > 0$ such that
\[
    \hnrm{\ve}{\beta_2}{\tau, T} \le c_7
\]
for all $t \in (\tau, T)$ and $\ep \in (0,1)$,
which completes the proof of \eqref{ueveHolder:ueHolder}
and \eqref{ueveHolder:veHolder} with $\beta:=\min\{\beta_1, \beta_2\}$
and $C := \max\{c_5, c_7\}$.
\end{proof}

With the aid of Lemma~\ref{Lem:ueveHolder},
we may draw on the classical parabolic Schauder theory in \cite{LSU-1968}
to obtain uniform bounds for $\ve$ in
certain $C^{2+\beta, 1 + \frac{\beta}{2}}$ spaces.
%
%
\begin{lem}\label{Lem:veHol2}
Let $n \in \N$, and suppose that $\alpha > 0$ fulfills \eqref{param-0:alpha}.
Then for all $T > 0$ and $\tau \in (0,T)$,
there are $\beta \in (0,1)$ and $C = C(\tau, T) > 0$ such that
\begin{align}\label{veHolder2:veSchauder}
    \hhnrm{\ve}{\beta}{\tau, T} \le C
      \quad\mbox{for all}\ \ep \in (0,1).
\end{align}
\end{lem}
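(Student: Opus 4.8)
\textbf{Proof plan for Lemma~\ref{Lem:veHol2}.}
The plan is to apply the classical parabolic Schauder estimates of \cite{LSU-1968} to the second equation in \eqref{Sys:Reg}, viewed as a linear inhomogeneous heat equation
\[
    (\ve)_t = \Delta \ve - \ve + g_{\ep}
    \quad\mbox{with}\quad
    g_{\ep} := \frac{\ue}{1 + \ep\ue},
\]
for which a H\"{o}lder bound on the right-hand side $g_{\ep}$ and on the initial slice will yield a uniform $C^{2+\beta, 1+\frac{\beta}{2}}$ bound. First I would observe that $0 \le g_{\ep} \le \ue$ pointwise, so Lemma~\ref{Lem:ueveHolder} immediately furnishes, for any $T > 0$ and $\tau \in (0,T)$, some $\beta_0 \in (0,1)$ and $c_1 = c_1(\tau, T) > 0$ with $\hnrm{\ue}{\beta_0}{\tfrac{\tau}{2}, T} \le c_1$ for all $\ep \in (0,1)$; combined with the elementary Lipschitz bound $|g_{\ep}(a) - g_{\ep}(b)| \le |a - b|$ for $a, b \ge 0$ this gives $\hnrm{g_{\ep}}{\beta_0}{\tfrac{\tau}{2}, T} \le c_2(\tau, T)$ uniformly in $\ep$.

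Next I would fix the initial time $\tfrac{\tau}{2}$ for the Schauder step and record the needed bound on that slice: Lemma~\ref{Lem:veLp-0} together with the bound on $\norm{\nabla\ve}{L^{\infty}}$ from the proof of Lemma~\ref{Lem:ueveHolder} (see \eqref{ueveHolder:vexLinfty}) shows $\nrm{\ve(\cdot, \tfrac{\tau}{2})}{C^{1+\beta_0}(\Ombar)} \le c_3(\tau, T)$ after possibly shrinking $\beta_0$, using that a function with bounded $L^{\infty}$ norm and H\"{o}lder continuous gradient lies in a suitable $C^{1+\beta_0}$ space (alternatively one may simply absorb the initial-data dependence by starting the Schauder estimate on $(\tfrac{\tau}{2}, T)$ and invoking interior-in-time parabolic regularity, which requires no compatibility conditions). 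I would also note that the Neumann boundary data $\nabla\ve \cdot \nu = 0$ is homogeneous, so the boundary compatibility needed for \cite{LSU-1968} is automatic. Then \cite[Theorem~\four.5.3]{LSU-1968} (the Schauder estimate for linear parabolic equations in H\"{o}lder spaces, with coefficients here constant) applied on $\Ombar \times (\tfrac{\tau}{2}, T)$ yields $\beta \in (0, \beta_0]$ and $C = C(\tau, T) > 0$ with $\hhnrm{\ve}{\beta}{\tau, T} \le C$ for all $\ep \in (0,1)$, which is exactly \eqref{veHolder2:veSchauder}.

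The only mild subtlety is bookkeeping the dependence on the initial slice $\tfrac{\tau}{2}$: one must make sure the Schauder constant depends only on $\tau$ and $T$ (and the fixed data $\Omega$, $\lambda_1$, $n$) and not on $\ep$, which is clear since all the inputs above are $\ep$-uniform. I expect the main (though still routine) obstacle to be verifying that the right-hand side $g_{\ep}$ genuinely has a uniform-in-$\ep$ parabolic H\"{o}lder norm up to the boundary; this is handled by the Lipschitz estimate on $\xi \mapsto \xi/(1+\ep\xi)$ uniformly in $\ep \in (0,1)$, which reduces it to the already-established bound \eqref{ueveHolder:ueHolder} on $\ue$. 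Everything else is a direct citation of the standard Schauder theory.
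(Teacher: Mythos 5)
Your overall strategy—treat the second equation as a linear heat equation and apply the Schauder theory of \cite{LSU-1968}, using Lemma~\ref{Lem:ueveHolder} to control the inhomogeneity—is exactly the paper's strategy, and your observation that $\xi\mapsto\xi/(1+\ep\xi)$ is Lipschitz uniformly in $\ep$ so that the H\"older bound on $\ue$ transfers to $g_\ep$ is correct. However, your primary route has two genuine gaps, both concerning the initial slice at $t=\tfrac{\tau}{2}$.

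First, you claim that $\ve(\cdot,\tfrac{\tau}{2})\in C^{1+\beta_0}(\Ombar)$ follows from Lemma~\ref{Lem:veLp-0} and \eqref{ueveHolder:vexLinfty}, ``using that a function with bounded $L^\infty$ norm and H\"older continuous gradient lies in $C^{1+\beta_0}$.'' But none of the inputs you cite establish that $\nabla\ve(\cdot,\tfrac{\tau}{2})$ is H\"older continuous: Lemma~\ref{Lem:veLp-0} gives $L^p$ bounds for $\ve$, \eqref{ueveHolder:vexLinfty} gives only an $L^\infty$ bound for $\nabla\ve$, and Lemma~\ref{Lem:ueveHolder} gives a $C^{\beta,\beta/2}$ bound for $\ve$ itself, not for $\nabla\ve$. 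So at that stage you only have $W^{1,\infty}$ control of the slice, which does not yield $C^{1+\beta_0}$. Second, even granting $C^{1+\beta_0}$ regularity, the up-to-initial-time Schauder estimate in \cite[Theorem~$\four$.5.3]{LSU-1968} requires $C^{2+\beta}$ initial data with compatibility conditions of the appropriate order; a $C^{1+\beta_0}$ bound is insufficient for a $C^{2+\beta,1+\beta/2}$ estimate extending down to the initial time.

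Your parenthetical alternative—start the estimate later in time and invoke interior-in-time regularity, which needs no compatibility—is the right idea, and it is precisely what the paper implements: it multiplies $\ve$ by a smooth cutoff $\zeta$ with $\zeta\equiv 0$ on $[\tfrac{\tau}{3},\tfrac{\tau}{2}]$ and $\zeta\equiv 1$ on $[\tau,T]$, so that $\vtile=\zeta\ve$ solves $(\vtile)_t=\Delta\vtile+\big[(\zeta'-\zeta)\ve+\zeta\,\tfrac{\ue}{1+\ep\ue}\big]$ with zero initial data at $t=\tfrac{\tau}{3}$. With vanishing initial data the compatibility conditions are trivially satisfied, the modified right-hand side is still bounded in $C^{\beta,\beta/2}$ via Lemma~\ref{Lem:ueveHolder}, and \cite[Theorem~$\four$.5.3]{LSU-1968} applies directly; restricting to $[\tau,T]$ where $\zeta\equiv 1$ gives the claim. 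You should replace the hand-wavy ``interior-in-time'' remark with this cutoff construction (or a citation of a precise interior-in-time Schauder statement) and drop the claim about $C^{1+\beta_0}$ regularity of the slice, which is both unsupported and in any case too weak.
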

\begin{proof}
For arbitrary $T > 0$ and $\tau \in (0, T)$, we define
\[
    \vtile(x,t) := \zeta(t)\ve(x,t)
      \quad\mbox{for}\ x \in \Ombar,\ t \in [\tfrac{\tau}{3}, T]
      \ \mbox{and}\ \ep \in (0,1),
\]
where $\zeta \in C^{\infty}((0,\infty))$ satisfies the conditions that
$0 \le \zeta \le 1$ on $(0, \infty)$ and
\[
    \zeta(t) = 
    \begin{cases}
      0 & \mbox{if}\ t \in [\tfrac{\tau}{3}, \tfrac{\tau}{2}], \\
      1 & \mbox{if}\ t \in [\tau, T].
    \end{cases}
\]
Abbreviating
\[
    g_{\ep}(x,t) := (\zeta'(t) - \zeta(t))\ve(x,t) + \zeta(t) 
        \frac{\ue(x,t)}{1 + \ep\ue(x,t)}
      \quad\mbox{for}\ x \in \Ombar,\ t \in [\tfrac{\tau}{3}, T]
      \ \mbox{and}\ \ep \in (0,1),
\]
we infer from \eqref{Sys:Reg} that $\vtile$ solves the problem
\begin{align*}
  \begin{cases}
    (\vtile)_t=\Delta \vtile + g_{\ep}
    & \mbox{in}\ \Omega \times (\tfrac{\tau}{3}, T),
  \\
    \nabla \vtile \cdot \nu = 0
    & \mbox{on}\ \pa\Omega \times (\tfrac{\tau}{3}, T),
  \\
    \vtile(\cdot , \tfrac{\tau}{3}) = 0
    & \mbox{in}\ \Omega.
  \end{cases}
\end{align*}
Since from Lemma~\ref{Lem:ueveHolder} we know that there are
$\beta \in (0,1)$ and $c_1 = c_1(\tau, T) > 0$ such that
\[
    \hnrm{g_{\ep}}{\beta}{\frac{\tau}{3}, T} \le c_1
\]
for any $\ep\in (0,1)$,
thanks to \cite[Theorem \four.5.3]{LSU-1968} we observe that
there exists a constant $c_2 = c_2(\tau, T) > 0$ fulfilling
\[
    \hhnrm{\vtile}{\beta}{\frac{\tau}{3}, T} \le c_2
\]
for all $\ep \in (0,1)$.
As $\zeta \equiv 1$ on $[\tau, T]$, we arrive at \eqref{veHolder2:veSchauder}.
\end{proof}

In Lemma~\ref{Lem:ueHol2}, we will show that
$\ue$ belongs to $\phhsp{\beta}{\tau, T}$
for any $T > 0$ and $\tau \in (0, T)$ with some $\beta \in (0,1)$,
by using a similar agument as in Lemma~\ref{Lem:veHol2}
with the cutoff function $\zeta$.
However, different from the previous lemma,
the chemotactic term in \eqref{Sys:Reg} now requires
H\"{o}lder continuity of $\nabla \ue$ to apply the Schauder theory
in \cite{LSU-1968}.
This can be established with the aid of \cite[Theorem 1.1]{L-1987}.
%
%
\begin{lem}\label{Lem:uexHol}
Let $n\in \N$, and let $\alpha > 0$ satisfy \eqref{param-0:alpha}.
Then for any $T > 0$ and $\tau \in (0, T)$,
there exist $\beta \in (0,1)$ and $C = C(\tau, T) > 0$ such that
\[
    \htnrm{\ue}{\beta}{\tau, T} \le C
      \quad\mbox{for all}\ \ep \in (0,1).
\]
\end{lem}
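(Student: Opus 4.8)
\textbf{Proof plan for Lemma~\ref{Lem:uexHol}.}
The plan is to mimic the cut-off strategy of Lemma~\ref{Lem:veHol2}, but now using a gradient-H\"older estimate for scalar parabolic equations in divergence form rather than the Schauder estimate. First I would fix $T>0$ and $\tau\in(0,T)$ and introduce a smooth temporal cut-off $\zeta\in C^{\infty}((0,\infty))$ with $0\le\zeta\le1$, $\zeta\equiv0$ on $[\tfrac{\tau}{3},\tfrac{\tau}{2}]$ and $\zeta\equiv1$ on $[\tau,T]$, and set $\utile(x,t):=\zeta(t)\ue(x,t)$ for $(x,t)\in\Ombar\times[\tfrac{\tau}{3},T]$. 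A direct computation using the first equation of \eqref{Sys:Reg} shows that $\utile$ solves a problem of the form
\begin{align*}
  \begin{cases}
    (\utile)_t=\Delta\utile-\nabla\cdot\big(\zeta(t)\,\ue\,f(|\nabla\ve|^2)\nabla\ve\big)+\zeta'(t)\ue
    & \mbox{in}\ \Omega\times(\tfrac{\tau}{3},T),
  \\
    \big(\nabla\utile-\zeta(t)\,\ue\,f(|\nabla\ve|^2)\nabla\ve\big)\cdot\nu=0
    & \mbox{on}\ \pa\Omega\times(\tfrac{\tau}{3},T),
  \\
    \utile(\cdot,\tfrac{\tau}{3})=0
    & \mbox{in}\ \Omega,
  \end{cases}
\end{align*}
which is a homogeneous-type initial-boundary value problem for a linear divergence-form parabolic equation with bounded ingredients.

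The second step is to check that the data entering this problem satisfy the structural hypotheses needed for the gradient-H\"older estimate of \cite[Theorem 1.1]{L-1987}. The principal part is simply the Laplacian, so uniform ellipticity is automatic; what must be controlled is the $L^\infty$ norm on $(\tfrac{\tau}{3},T)$ of the "convective" vector field $\zeta(t)\,\ue\,f(|\nabla\ve|^2)\nabla\ve$ and of the zeroth-order-type forcing $\zeta'(t)\ue$. Here Lemma~\ref{Lem:ueLinfty} supplies an $\ep$-independent bound for $\norm{\ue}{L^{\infty}}$ on $(\tfrac{\tau}{3},T)$, while \eqref{ueveHolder:vexLinfty} (established inside the proof of Lemma~\ref{Lem:ueveHolder}) supplies an $\ep$-independent bound for $\norm{\nabla\ve}{L^{\infty}}$ on the same interval; since $f\in C^2([0,\infty))$ is continuous and $|\nabla\ve|^2$ stays in a fixed bounded set, $f(|\nabla\ve|^2)$ is bounded too. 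Thus the entire flux vector and the forcing term are bounded uniformly in $\ep$ on $\Ombar\times[\tfrac{\tau}{3},T]$, and the initial datum $\utile(\cdot,\tfrac{\tau}{3})=0$ is as smooth as one could wish. Invoking \cite[Theorem 1.1]{L-1987} then yields $\beta\in(0,1)$ and $c=c(\tau,T)>0$, both independent of $\ep$, with $\htnrm{\utile}{\beta}{\frac{\tau}{3},T}\le c$; restricting to $[\tau,T]$, where $\zeta\equiv1$ and hence $\utile\equiv\ue$, gives the claimed estimate $\htnrm{\ue}{\beta}{\tau,T}\le C$.

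I expect the main obstacle to be purely bookkeeping rather than conceptual: one must verify that \cite[Theorem 1.1]{L-1987} applies in the Neumann (conormal) setting with the particular boundary condition produced by the cut-off, and that its hypotheses only demand $L^\infty$ (not H\"older) control of the lower-order coefficients and of the right-hand side — which is exactly what Lemmas~\ref{Lem:ueLinfty} and \ref{Lem:ueveHolder} provide, and no more. A minor point to handle carefully is the regularity of the convective field near $t=\tfrac{\tau}{3}$: since $\zeta$ vanishes identically on $[\tfrac{\tau}{3},\tfrac{\tau}{2}]$, the field and the forcing are genuinely $L^\infty$ (indeed continuous) up to the initial time, so no compatibility subtlety arises. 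Everything else is a routine translation of the argument already used for $\ve$ in Lemma~\ref{Lem:veHol2}.
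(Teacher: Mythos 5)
Your overall strategy — cutoff in time, rewrite the equation for $\utile$ in divergence form, and invoke \cite[Theorem 1.1]{L-1987} with the conormal boundary condition — coincides exactly with the paper's proof. The gap lies in the verification step: you conjecture (and build your argument around the hope) that Lieberman's gradient-H\"older estimate only requires $L^{\infty}$ control of the flux vector $\zeta\,\ue\,f(|\nabla\ve|^2)\nabla\ve$ and of the forcing $\zeta'\ue$, so you use only Lemma~\ref{Lem:ueLinfty} and the $L^\infty$ bound \eqref{ueveHolder:vexLinfty} on $\nabla\ve$. That is not enough. Theorem 1.1 of \cite{L-1987} concerns equations $(\utile)_t=\nabla\cdot A(x,t,z,p)+B(x,t,z,p)$, and its structure conditions (the paper cites (1.2a)--(1.2c), (1.3), (1.5), (1.6)) include a modulus-of-continuity (H\"older/Dini) requirement on $A$ in the $(x,t,z)$-variables. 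Here $A(x,t,z,p)=p-z\,f(|\nabla\ve(x,t)|^2)\nabla\ve(x,t)$, whose $(x,t)$-dependence enters through $\nabla\ve$; so you need $\nabla\ve$ to be H\"older continuous in $(x,t)$, not merely bounded. Heuristically, $\utile_t-\Delta\utile=\nabla\cdot F+B_\ep$ with $F\in L^\infty$ only yields $\nabla\utile\in\mathrm{BMO}$, not H\"older — to push to $C^{1+\beta,(1+\beta)/2}$ one needs $F$ itself H\"older, i.e.\ $\ue$ and $\nabla\ve$ H\"older.

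This is precisely why the paper invokes Lemma~\ref{Lem:ueveHolder} (which gives H\"older bounds for $\ue$ and $\ve$) \emph{and} Lemma~\ref{Lem:veHol2} (which gives a uniform $C^{2+\beta,1+\beta/2}$ bound on $\ve$, hence a H\"older bound on $\nabla\ve$), rather than the $L^\infty$ statements you use. Replacing your $L^\infty$ references by those two H\"older lemmas, and noting $f\in C^2([0,\infty))$ so that $\xi\mapsto f(\xi)$ is locally Lipschitz on the bounded range of $|\nabla\ve|^2$, closes the gap and reproduces the paper's argument. Everything else — the cutoff, the zero initial datum, the conormal rewriting of the Neumann condition, and the restriction to $[\tau,T]$ where $\zeta\equiv1$ — is correct and matches the paper.
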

\begin{proof}
Fixing $T > 0$ and $\tau \in (0, T)$, and letting
\[
    \utile(x,t) := \zeta(t)\ue(x,t)
      \quad\mbox{for}\ x \in \Ombar,\ t \in [\tfrac{\tau}{3}, T]
      \ \mbox{and}\ \ep \in (0,1)
\]
with the cutoff function $\zeta \in C^{\infty}((0,\infty))$ as in
Lemma~\ref{Lem:veHol2}, we see that $\utile$ is the unique solution of
the system
\begin{align}\label{uexHol:Sys}
  \begin{cases}
    (\utile)_t = \nabla \cdot A_{\ep} + B_{\ep}
    & \mbox{in}\ \Omega \times (\tfrac{\tau}{3}, T),
  \\
    \nabla \utile \cdot \nu = 0
    & \mbox{on}\ \pa\Omega \times (\tfrac{\tau}{3}, T),
  \\
    \utile(\cdot , \tfrac{\tau}{3}) = 0
    & \mbox{in}\ \Omega,
  \end{cases}
\end{align}
where
\begin{align*}
    &A_{\ep}(x,t) := \nabla\utile(x,t) - \utile(x,t)
      f(|\nabla\ve(x,t)|^2)\nabla \ve(x,t)
\\ \intertext{and}
    &B_{\ep}(x,t) := \zeta'(t) \ue(x,t)
\end{align*}
for $x \in \Ombar$, $t \in [\frac{\tau}{3}, T]$ and $\ep \in (0,1)$.
Noting that $f\in C^2([0, \infty))$ and $f \le k_f$ on $[0, \infty)$,
by virtue of Lemmas~\ref{Lem:ueveHolder} and \ref{Lem:veHol2},
we deduce that the function $A_{\ep}$ and $B_{\ep}$ satisfy the conditions
\cite[(1.2a)--(1.2c), (1.3), (1.5) and (1.6)]{L-1987}
with suitable constants which are independent of $\ep \in (0,1)$.
Therefore, by \cite[Theorem 1.1]{L-1987},
we can find $\beta\in (0,1)$ and $c_1 = c_1(\tau, T) > 0$ such that
\[
    \htnrm{\utile}{\beta}{\frac{\tau}{3}, T} \le c_1
\]
for all $\ep \in (0,1)$, completing the proof of this lemma.
\end{proof}

%
%
\begin{remark}
The above lemma is the first place in which we use boundedness
of the function $f$ on some interval including $0$.
This implies that we can also establish H\"{o}lder bounds for solutions
as in Lemmas~\ref{Lem:ueveHolder} and \ref{Lem:veHol2}
even when the function $f$ is not bounded near $0$
(see for instance \cite[Lemmas 4.1--4.4]{JRT-2023} and \cite[Lemma 3.4]{Ko1}).
\end{remark}

Lemma~\ref{Lem:uexHol} now enables us to gain uniform bounds for $\ue$
in $\phhsp{\beta}{\tau, T}$
for any $T > 0$ and $\tau \in (0, T)$
with some $\beta\in (0,1)$.
%
%
\begin{lem}\label{Lem:ueHol2}
Let $n \in \N$, and suppose that $\alpha > 0$ fulfills \eqref{param-0:alpha}.
Then for all $T > 0$ and $\tau \in (0,T)$,
there are $\beta \in (0,1)$ and $C = C(\tau, T) > 0$ such that
%
\begin{align}\label{ueHol2:ueSchauder}
    \hhnrm{\ue}{\beta}{\tau, T} \le C
      \quad\mbox{for all}\ \ep \in (0,1).
\end{align}
%
\end{lem}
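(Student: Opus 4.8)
The plan is to mirror the proof of Lemma~\ref{Lem:veHol2}: having the H\"{o}lder bounds for $\nabla\ue$ from Lemma~\ref{Lem:uexHol} and for $D^2\ve$ from Lemma~\ref{Lem:veHol2} at hand, the whole chemotactic term in \eqref{Sys:Reg} can be regarded as a known inhomogeneity of class $C^{\beta,\frac{\beta}{2}}$, after which the classical parabolic Schauder theory applies. Concretely, for fixed $T>0$ and $\tau\in(0,T)$ I would take the cutoff function $\zeta\in C^\infty((0,\infty))$ from Lemma~\ref{Lem:veHol2} and set $\utile(x,t):=\zeta(t)\ue(x,t)$ for $x\in\Ombar$ and $t\in[\tfrac{\tau}{3},T]$. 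Multiplying the first equation of \eqref{Sys:Reg} by $\zeta$ and using $\zeta\nabla\cdot(\ue f(|\nabla\ve|^2)\nabla\ve)=\nabla\cdot(\utile f(|\nabla\ve|^2)\nabla\ve)$, one sees that $\utile$, which lies in $C^{2,1}(\Ombar\times[\tfrac{\tau}{3},T])$ by Lemma~\ref{Lem:App}, is the classical solution of
\[
  \begin{cases}
    (\utile)_t=\Delta\utile+h_\ep & \mbox{in}\ \Omega\times(\tfrac{\tau}{3},T),\\
    \nabla\utile\cdot\nu=0 & \mbox{on}\ \pa\Omega\times(\tfrac{\tau}{3},T),\\
    \utile(\cdot,\tfrac{\tau}{3})=0 & \mbox{in}\ \Omega,
  \end{cases}
\]
where $h_\ep:=\zeta'\ue-\nabla\cdot(\utile f(|\nabla\ve|^2)\nabla\ve)$.

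The heart of the matter is to show that $h_\ep$ is bounded in $\phsp{\beta}{\tfrac{\tau}{3},T}$ uniformly in $\ep\in(0,1)$ for some $\beta\in(0,1)$. To this end I would expand
\[
  \nabla\cdot(\utile f(|\nabla\ve|^2)\nabla\ve)
  = f(|\nabla\ve|^2)\,\nabla\ve\cdot\nabla\utile
  + \utile\Big(2f'(|\nabla\ve|^2)\,\nabla\ve\cdot(D^2\ve)\nabla\ve
  + f(|\nabla\ve|^2)\,\Delta\ve\Big)
\]
and estimate each factor. Lemmas~\ref{Lem:ueveHolder} and \ref{Lem:uexHol} provide $\ep$-independent bounds for $\ue$ and $\nabla\ue$, hence for $\utile$ and $\nabla\utile$, in $\phsp{\beta}{\tfrac{\tau}{3},T}$, and Lemma~\ref{Lem:veHol2} does the same for $\nabla\ve$ and $D^2\ve$. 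Moreover $|\nabla\ve|$ is bounded on $\Ombar\times[\tfrac{\tau}{3},T]$ uniformly in $\ep$ (see the proof of Lemma~\ref{Lem:ueveHolder}), so the argument of $f$ stays in a fixed compact subinterval of $[0,\infty)$ on which $f$ and $f'$ are Lipschitz by \eqref{Ass:f}; therefore $f(|\nabla\ve|^2)$ and $f'(|\nabla\ve|^2)$ are bounded in $\phsp{\beta}{\tfrac{\tau}{3},T}$ uniformly in $\ep$. Combining all of this with the smoothness of $\zeta$ and the algebra property of H\"{o}lder spaces --- after shrinking $\beta$ to the minimum of the exponents appearing --- yields $c_1=c_1(\tau,T)>0$ with $\hnrm{h_\ep}{\beta}{\tfrac{\tau}{3},T}\le c_1$ for all $\ep\in(0,1)$.

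It then remains to invoke the parabolic Schauder estimate, precisely \cite[Theorem \four.5.3]{LSU-1968} applied exactly as in the proof of Lemma~\ref{Lem:veHol2} (the vanishing initial and Neumann data securing the compatibility conditions), to obtain $\beta\in(0,1)$ and $c_2=c_2(\tau,T)>0$ with $\hhnrm{\utile}{\beta}{\tfrac{\tau}{3},T}\le c_2$ for all $\ep\in(0,1)$; since $\zeta\equiv1$ on $[\tau,T]$, this is precisely \eqref{ueHol2:ueSchauder}. I expect the main obstacle to be the bookkeeping of the middle paragraph: one must verify that every nonlinear combination occurring in $h_\ep$ --- in particular the compositions $f(|\nabla\ve|^2)$ and $f'(|\nabla\ve|^2)$ and their products with the first- and second-order derivatives of $\ve$ and with $\nabla\utile$ --- is genuinely H\"{o}lder continuous with norm controlled independently of $\ep$, which is where the uniform $L^\infty$-bound for $\nabla\ve$ and the H\"{o}lder bounds of Lemmas~\ref{Lem:uexHol} and \ref{Lem:veHol2} are used simultaneously.
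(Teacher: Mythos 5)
Your proposal is correct and follows essentially the same strategy as the paper: multiply by the cutoff $\zeta$, rewrite the resulting equation for $\utile$ as a linear heat equation with a H\"{o}lder-bounded inhomogeneity, invoke Lemmas~\ref{Lem:ueveHolder}, \ref{Lem:veHol2}, \ref{Lem:uexHol} to control that inhomogeneity uniformly in $\ep$, and conclude via the Schauder estimate from \cite[Theorem \four.5.3]{LSU-1968}. The only difference is cosmetic (you absorb the paper's $B_\ep$ into your $h_\ep$ and carry out the expansion of the divergence term more explicitly), and your extra remark about $f,f'$ being Lipschitz on the relevant compact range of $|\nabla\ve|^2$ makes explicit a point the paper leaves implicit.
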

\begin{proof}
Fixing $T > 0$ and $\tau \in (0,T)$ arbitrary, we rewrite the first equation in
\eqref{uexHol:Sys} as
\[
    (\utile)_t = \Delta \utile - h_{\ep} + B_{\ep},
\]
where
\begin{align*}
    &h_{\ep}(x,t)
\\
    &\quad\, := \zeta(t) \left[f(|\nabla\ve(x,t)|^2)\nabla \ue(x,t)\cdot\nabla \ve(x,t)
    + \ue(x,t) \nabla \cdot \big(
      f(|\nabla\ve(x,t)|^2)\nabla \ve(x,t)\big)\right]
\end{align*}
for $x \in \Ombar$, $t \in [\frac{\tau}{3}, T]$ and $\ep \in (0,1)$.
Then we employ Lemmas~\ref{Lem:ueveHolder}, \ref{Lem:veHol2}
and \ref{Lem:uexHol} to see that with some
$\beta \in (0,1)$ and $c_1 = c_1(\tau, T) > 0$,
%
\[
    \hnrm{h_{\ep}}{\beta}{\tau, T} \le c_1
      \quad\mbox{and}\quad
    \hnrm{B_{\ep}}{\beta}{\tau, T} \le c_1
\]
for all $\ep \in (0,1)$.
Thus, similarly to Lemma~\ref{Lem:veHol2} we once again apply
\cite[Theorem \four.5.3]{LSU-1968} to conclude the proof.
\end{proof}

Having thus asserted \eqref{veHolder2:veSchauder} and
\eqref{ueHol2:ueSchauder},
we are now in a position to construct a pair of nonnegative functions
$(u,v)$ which solves the problem \eqref{Sys:Main} classically
with the regularity \eqref{Result:reg}.
%
%
\begin{lem}\label{Lem:uvC21}
Let $n\in \N$, and let $\alpha > 0$ satisfy \eqref{param-0:alpha}.
Then there exist $(\ep_j)_{j\in\N}\subset (0,1)$ as well as
nonnegative functions $u,v \in C^{\infty}(\Ombar \times (0, \infty))$
such that $\ep_j \dwto 0$ as $j\to\infty$ and
\begin{align}
    &\ue \to u
      \quad\mbox{in}\ C^{2,1}_{\rm loc}(\Ombar\times (0,\infty)),
      \label{uvC21:uC21}
\\
    &\ve \to v
      \quad\mbox{in}\ C^{2,1}_{\rm loc}(\Ombar\times (0,\infty))
      \label{uvC21:vC21}
\end{align}
as $\ep = \ep_j \dwto 0$.
Moreover, $(u,v)$ solves \eqref{Sys:Main}
in the classical sense in $\Ombar \times (0,\infty)$,
and satisfies the mass conservation property \eqref{Result:mass}.
\end{lem}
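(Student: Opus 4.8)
The plan is to extract a convergent subsequence from the uniform Hölder bounds established in Lemmas~\ref{Lem:veHol2} and \ref{Lem:ueHol2}, and then upgrade the limit to a smooth classical solution via a bootstrap argument. First I would fix an exhausting sequence of parabolic cylinders: take $\tau_k \dwto 0$ and $T_k \upto \infty$, say $\tau_k = \frac1k$ and $T_k = k$, so that $\Ombar \times (0,\infty) = \bigcup_k \big(\Ombar \times [\tau_k, T_k]\big)$. On each $\Ombar \times [\tau_k, T_k]$, Lemmas~\ref{Lem:veHol2} and \ref{Lem:ueHol2} furnish $\beta = \beta_k \in (0,1)$ and an $\ep$-independent bound for $\hhnrm{\ue}{\beta_k}{\tau_k, T_k}$ and $\hhnrm{\ve}{\beta_k}{\tau_k, T_k}$. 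Since the embedding $\phhsp{\beta_k}{\tau_k, T_k} \embd \phhsp{\beta_k'}{\tau_k, T_k} \subset C^{2,1}(\Ombar \times [\tau_k, T_k])$ is compact for any $\beta_k' < \beta_k$, a diagonal extraction over $k$ along a single null sequence $(\ep_j)_{j \in \N}$ yields nonnegative limit functions $u, v \in C^{2,1}_{\rm loc}(\Ombar \times (0,\infty))$ with \eqref{uvC21:uC21} and \eqref{uvC21:vC21}. Passing to the limit $\ep = \ep_j \dwto 0$ in the equations of \eqref{Sys:Reg} is then routine: the $C^{2,1}_{\rm loc}$ convergence handles all derivative terms, the continuity of $f$ together with uniform convergence of $\nabla \ve$ on compacts handles the chemotactic term, and the factor $\frac{\ue}{1 + \ep\ue}$ converges locally uniformly to $u$ since $\ue$ is locally bounded and $\ep_j \to 0$; hence $(u,v)$ solves the first two equations of \eqref{Sys:Main} pointwise on $\Ombar \times (0,\infty)$, and the Neumann conditions are preserved under $C^1$-convergence up to the boundary.

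Next I would promote the $C^{2,1}_{\rm loc}$-solution to a $C^\infty$-solution. Having a classical solution on $\Ombar \times (0,\infty)$, one applies parabolic Schauder theory iteratively: the right-hand side $-\nabla \cdot (u f(|\nabla v|^2) \nabla v)$ and the source $-v + u$ gain Hölder regularity from the Hölder regularity of $(u, v)$, so the linear Schauder estimates from \cite{LSU-1968} (Theorems~\three.10.1 and \four.5.3, as already invoked above) push $(u,v)$ into $C^{2+\beta, 1+\frac\beta2}_{\rm loc}$; differentiating the equations and bootstrapping, using that $f \in C^2$, yields successively higher regularity, and a standard induction gives $u, v \in C^\infty(\Ombar \times (0,\infty))$. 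This step is where the smoothness assumption \eqref{Ass:f} on $f$ is actually used in full, though since the solution is already classical and $\nabla v$ is locally bounded away from the degeneracy of $f$, the bootstrap is the familiar one and presents no genuine difficulty.

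Finally, the mass conservation property \eqref{Result:mass}: for each $\ep_j$ we have $\io \ue(\cdot, t) = m$ for all $t > 0$ by \eqref{Mass_App}, and since $\ue \to u$ locally uniformly on $\Ombar \times (0,\infty)$, in particular uniformly on $\Ombar \times \{t\}$ for each fixed $t > 0$, we may pass to the limit under the integral to obtain $\io u(\cdot, t) = m$ for all $t > 0$, which is \eqref{Result:mass}. The main obstacle in this lemma is not conceptual but bookkeeping: one must carefully arrange the diagonal extraction so that a \emph{single} sequence $(\ep_j)$ works simultaneously on every cylinder $\Ombar \times [\tau_k, T_k]$, and keep track of the fact that the Hölder exponent $\beta_k$ furnished by the earlier lemmas may depend on $k$ — this is harmless because one only needs $C^{2,1}_{\rm loc}$ convergence, for which any positive exponent suffices, but it should be stated cleanly. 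Everything else follows from the uniform estimates already in hand.
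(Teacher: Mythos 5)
Your proposal is correct and follows essentially the same route as the paper: compact embedding of the $C^{2+\beta,1+\beta/2}$ Hölder spaces from Lemmas~\ref{Lem:veHol2} and \ref{Lem:ueHol2}, a diagonal extraction over an exhaustion of $\Ombar\times(0,\infty)$, passage to the limit in \eqref{Sys:Reg}, and \eqref{Result:mass} from \eqref{Mass_App} via locally uniform convergence. The paper's proof is in fact terser than yours and leaves the Schauder bootstrap to $C^\infty$ implicit; your explicit treatment of that step and of the diagonal argument is a clean and harmless expansion of the same argument.
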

\begin{proof}
The statements \eqref{uvC21:uC21} and \eqref{uvC21:vC21}
along with a suitable sequence and functions
are direct consequences of Lemmas~\ref{Lem:veHol2} and \ref{Lem:ueHol2}.
From these, we observe that $(u,v)$ is a classical solution of \eqref{Sys:Main},
whereas \eqref{Result:mass} results from \eqref{Mass_App}.
\end{proof}

\section{Continuity at $t = 0$: Proof of Theorem~\ref{Thm:Main}}\label{Sec:t0}

The goal of this section is to show the properties
\eqref{Result:u} and \eqref{Result:v} for the functions $u,v$ from
Lemma~\ref{Lem:uvC21},
and to prove Theorem~\ref{Thm:Main}.
As a preparation to the proof of \eqref{Result:u},
we begin with showing the integral of $\ue|\nabla\ve|^{1-2\alpha}$
on $\Omega\times (0,T)$ tends to 0 when $T \dwto 0$.
The idea of the proof is based on \cite{H-2023P}.
%
%
\begin{lem}\label{Lem:taxisL1}
Let $\alpha > 0$ satisfy \eqref{param-0:alpha}, and suppose that
$r$ satisfies \eqref{param-0:r}.
Then for all $T > 0$, there exists $C > 0$ such that
\[
    \int_0^t \nrm{\ue(\cdot, \sigma) |\nabla \ve(\cdot, \sigma)|^{1-2\alpha}}
    {L^1} \,\intd{\sigma}
    \le C t^{1-\frac{n}{2}(1-\frac{1}{r})}
      \quad\mbox{for all}\ t \in (0, T)\ \mbox{and}\ \ep\in (0,1).
\]
\end{lem}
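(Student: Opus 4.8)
The plan is to estimate $\nrm{\ue(\cdot,\sigma)|\nabla\ve(\cdot,\sigma)|^{1-2\alpha}}{L^1}$ by a H\"older splitting that separates the population density from the gradient of the signal, and then to integrate in time using the power-type bounds already at our disposal. Concretely, I would write
\[
    \nrm{\ue(\cdot,\sigma)|\nabla\ve(\cdot,\sigma)|^{1-2\alpha}}{L^1}
    \le \nrm{\ue(\cdot,\sigma)}{L^{r}}\,
    \nrm{|\nabla\ve(\cdot,\sigma)|^{1-2\alpha}}{L^{r'}},
\]
where $r' = \frac{r}{r-1}$ is the conjugate exponent, so that the second factor equals $\nrm{\nabla\ve(\cdot,\sigma)}{L^{r'(1-2\alpha)}}^{1-2\alpha}$. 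The point of assuming $r$ satisfies \eqref{param-0:r} is precisely that then $r'(1-2\alpha)\in(0,q]$: indeed \eqref{param-0:r} gives $r > \frac{q}{q-1+2\alpha}$, which rearranges to $r'(1-2\alpha) < q$. Hence Lemma~\ref{Lem:vexLp-0} applies and provides a constant $c_1 = c_1(|\Omega|,q,m,n)>0$ with $\nrm{\nabla\ve(\cdot,\sigma)}{L^{r'(1-2\alpha)}}^{1-2\alpha}\le c_1$ for all $\sigma>0$ and $\ep\in(0,1)$.

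\textbf{Main step.} With that uniform bound in hand, the estimate reduces to
\[
    \int_0^t \nrm{\ue(\cdot,\sigma)|\nabla\ve(\cdot,\sigma)|^{1-2\alpha}}{L^1}\,\intd{\sigma}
    \le c_1 \int_0^t \nrm{\ue(\cdot,\sigma)}{L^{r}}\,\intd{\sigma},
\]
and now I would invoke Lemma~\ref{Lem:ueLr-0}. By Remark~\ref{Rem:gamma}, since $r$ satisfies \eqref{param-0:r} we may take $\gamma=r$ in \eqref{ueLr-0:IntCt}, which yields, for the fixed $T>0$, a constant $c_2 = c_2(\alpha,T,q,r,m,n,|\Omega|)>0$ such that $\int_0^t\nrm{\ue(\cdot,\sigma)}{L^r}\,\intd{\sigma}\le c_2\, t^{1-\frac{n}{2}(1-\frac{1}{r})}$ for all $t\in(0,T)$ and $\ep\in(0,1)$. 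Combining the last two displays with $C:=c_1 c_2$ gives exactly the claimed inequality. One should note that Lemma~\ref{Lem:ueLr-0} requires $r$ to satisfy \eqref{ueLr-0:r}; but \eqref{param-0:r} is more restrictive than \eqref{ueLr-0:r} (the lower endpoint $\frac{q}{q-1+2\alpha}$ is $\ge 1$ and the upper endpoints coincide), so this is automatic.

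\textbf{Where the difficulty lies.} There is essentially no analytic obstacle here; the proof is a bookkeeping exercise once the right exponents are lined up. The only delicate point is verifying the exponent arithmetic, namely that the choice of $r$ from \eqref{param-0:r} simultaneously makes $r'(1-2\alpha)$ admissible for Lemma~\ref{Lem:vexLp-0} and makes $r$ admissible (with $\gamma=r$) for Lemma~\ref{Lem:ueLr-0}; this is the content of Remark~\ref{Rem:param-0:r} and Remark~\ref{Rem:gamma}, together with the elementary algebra relating $r>\frac{q}{q-1+2\alpha}$ to $r'(1-2\alpha)<q$. A secondary point worth flagging is that the time exponent $1-\frac{n}{2}(1-\frac{1}{r})$ is positive—again guaranteed by \eqref{param-0:r}, since $\frac{1}{r}>\frac{n-2}{n}$ forces $\frac{n}{2}(1-\frac{1}{r})<1$—so that the right-hand side indeed vanishes as $t\dwto 0$, which is the feature this lemma is designed to deliver for the subsequent continuity argument at $t=0$.
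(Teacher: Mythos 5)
Your proposal is correct and follows essentially the same route as the paper: the H\"older splitting with conjugate exponents (the paper writes the second exponent as $r_\alpha = \frac{1-2\alpha}{1-1/r}$, which is exactly your $r'(1-2\alpha)$), the uniform bound from Lemma~\ref{Lem:vexLp-0} via \eqref{param-0:rela}, and then Lemma~\ref{Lem:ueLr-0} with $\gamma = r$ as in Remark~\ref{Rem:gamma}. The exponent checks you flag are precisely the ones the paper relies on, so there is nothing to add.
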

\begin{proof}
We fix $T > 0$ arbitrary.
Noting that \eqref{param-0:rela} ensures
\[
    r_{\alpha} := \frac{1 - 2\alpha}{1 - \frac{1}{r}} < q,
\]
we see from
Lemma~\ref{Lem:vexLp-0} that
there is $c_1 > 0$ satisfying
\[
    \norm{\nabla \ve}{L^{r_{\alpha}}}^{1-2\alpha} \le c_1
\]
for any $t > 0$ and $\ep \in (0,1)$,
and hence by the H\"{o}lder inequality,
\begin{align}
\notag
    \nrm{\ue(\cdot, t) |\nabla \ve(\cdot, t)|^{1-2\alpha}}{L^1}
    &\le \norm{\ue}{L^r}\norm{\nabla \ve}{L^{r_{\alpha}}}^{1-2\alpha}
\\
    &\le c_1 \norm{\ue}{L^r}
    \label{taxisL1:Holder}
\end{align}
for all $t > 0$ and $\ep \in (0,1)$.
Moreover, in light of Remark~\ref{Rem:gamma},
we employ Lemma~\ref{Lem:ueLr-0} to find $c_2 > 0$ such that
\[
    \int_{0}^{t} \nrm{\ue(\cdot, \sigma)}{L^r} \,\intd{\sigma}
    \le c_2 t^{1 - \frac{n}{2}(1 - \frac{1}{r})}
\]
for all $t \in (0,T)$ and $\ep \in (0,1)$.
This together with \eqref{taxisL1:Holder} implies that
\begin{align*}
    \int_0^t \nrm{\ue(\cdot, \sigma) |\nabla \ve(\cdot, \sigma)|^{1-2\alpha}}
    {L^1} \,\intd{\sigma}
    &\le c_1 \int_{0}^{t} \nrm{\ue(\cdot, \sigma)}{L^r} \,\intd{\sigma}
\\
    &\le c_1 c_2 t^{1 - \frac{n}{2}(1 - \frac{1}{r})}
\end{align*}
for all $t \in (0, T)$ and $\ep \in (0,1)$.
Therefore we complete the proof of Lemma~\ref{Lem:taxisL1}.
\end{proof}

As a consequence of Lemma~\ref{Lem:taxisL1},
we can show that $\ue(\cdot, t) - \uie$ tends uniformly to $0$
as $t \dwto 0$ in the vague topology on $\Radon$.
This will be an important ingredient for proving the property \eqref{Result:u}
in Lemma~\ref{Lem:Limit_u}.
%
%
\begin{lem}\label{Lem:ueuie}
Suppose that $\alpha > 0$ satisfies \eqref{param-0:alpha}.
Then for any $\varphi \in C^0(\Ombar)$ and $\delta > 0$,
there is $t_0 = t_0(\varphi, \delta) > 0$ such that
\[
    \left|\io \ue(\cdot, t)\varphi - \io \uie\varphi\right| \le \delta
      \quad\mbox{for all}\ t \in (0, t_0)\ \mbox{and}\ \ep\in (0,1).
\]
\end{lem}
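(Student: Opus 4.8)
The plan is to work with the difference $\io\ue(\cdot,t)\varphi-\io\uie\varphi$ by writing it as a time integral of $\io(\ue)_t(\cdot,\sigma)\varphi$ and then bounding this using the first equation of \eqref{Sys:Reg}. Since $\varphi\in C^0(\Ombar)$ need not be smooth, I would first reduce to the case of smooth test functions: given $\delta>0$, pick $\psi\in C^2(\Ombar)$ with $\nrm{\varphi-\psi}{L^\infty}$ small enough that, using the uniform mass bound \eqref{Mass_App} for $\ue(\cdot,t)$ together with \eqref{uie:mass} for $\uie$, the contribution of $\varphi-\psi$ to $\big|\io\ue(\cdot,t)\varphi-\io\uie\varphi\big|$ is at most $\delta/2$ uniformly in $t\in(0,\infty)$ and $\ep\in(0,1)$. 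It therefore suffices to control $\big|\io\ue(\cdot,t)\psi-\io\uie\psi\big|$ for fixed $\psi\in C^2(\Ombar)$.

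For such $\psi$, multiplying the first equation of \eqref{Sys:Reg} by $\psi$, integrating over $\Omega$, and using the Neumann boundary conditions gives
\begin{align*}
    \ddt\io\ue(\cdot,t)\psi
    = \io\ue(\cdot,t)\Delta\psi
    + \io\ue(\cdot,t)f(|\nabla\ve(\cdot,t)|^2)\nabla\ve(\cdot,t)\cdot\nabla\psi.
\end{align*}
Integrating in time from $0$ to $t$, we obtain
\begin{align*}
    \left|\io\ue(\cdot,t)\psi-\io\uie\psi\right|
    \le \nrm{\Delta\psi}{L^\infty}\int_0^t\io\ue(\cdot,\sigma)\,\intd{\sigma}
    + \nrm{\nabla\psi}{L^\infty}\int_0^t\io\ue(\cdot,\sigma)f(|\nabla\ve(\cdot,\sigma)|^2)|\nabla\ve(\cdot,\sigma)|\,\intd{\sigma}.
\end{align*}
The first term equals $m\nrm{\Delta\psi}{L^\infty}t$ by \eqref{Mass_App}, which is $\le\delta/4$ once $t$ is small. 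For the second term I would invoke the flux-limitation hypothesis \eqref{Ass:f:Main}: since $f(\xi)\le k_f(1+\xi)^{-\alpha}$, we have $f(|\nabla\ve|^2)|\nabla\ve|\le k_f(1+|\nabla\ve|^2)^{-\alpha}|\nabla\ve|\le k_f|\nabla\ve|^{1-2\alpha}$ pointwise (the inequality $(1+\xi^2)^{-\alpha}\xi\le\xi^{1-2\alpha}$ holding for $\xi\ge0$, $\alpha\in(0,\tfrac12)$). Hence the second term is bounded by $k_f\nrm{\nabla\psi}{L^\infty}\int_0^t\nrm{\ue(\cdot,\sigma)|\nabla\ve(\cdot,\sigma)|^{1-2\alpha}}{L^1}\,\intd{\sigma}$.

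Now Lemma~\ref{Lem:taxisL1} applies: fixing any $r$ satisfying \eqref{param-0:r} (which exists by Remark~\ref{Rem:param-0:r}), there is $C>0$, independent of $\ep$, with $\int_0^t\nrm{\ue(\cdot,\sigma)|\nabla\ve(\cdot,\sigma)|^{1-2\alpha}}{L^1}\,\intd{\sigma}\le Ct^{1-\frac{n}{2}(1-\frac1r)}$ for $t\in(0,1)$, say. Because \eqref{param-0:r} and \eqref{Ass:q} force $1-\frac1r>\frac{n-2}{n}$ for $n\ge2$ — and more to the point $\frac{n}{2}(1-\frac1r)<1$, which is exactly property \eqref{param-0:1}-type bookkeeping already used in Lemma~\ref{Lem:ueLr-0} — the exponent $1-\frac{n}{2}(1-\frac1r)$ is positive, so this bound tends to $0$ as $t\dwto0$. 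Thus I can choose $t_0=t_0(\psi,\delta)>0$, hence $t_0=t_0(\varphi,\delta)$, so small that both time-dependent terms are $\le\delta/4$ for all $t\in(0,t_0)$ and $\ep\in(0,1)$; combined with the $\delta/2$ from the approximation step this gives the claim. The only mildly delicate point is keeping every estimate uniform in $\ep$, but this is automatic here since the mass identity \eqref{Mass_App} and the constant in Lemma~\ref{Lem:taxisL1} are $\ep$-independent; the genuine analytic work has already been done in Lemma~\ref{Lem:taxisL1} (itself resting on the $L^r$ estimate of Lemma~\ref{Lem:ueLr-0}), so the present proof is essentially a clean assembly.
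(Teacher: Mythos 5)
Your overall strategy coincides with the paper's: approximate $\varphi$ uniformly by a smooth $\psi$, write the difference as a time integral of $\io(\ue)_t\psi$ via the first PDE, and then close with the mass identity \eqref{Mass_App} and Lemma~\ref{Lem:taxisL1}. The only place you diverge — and it is a genuine gap — is the choice of the approximating class.

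You take $\psi\in C^2(\Ombar)$ with no boundary condition and then write
\[
    \ddt\io\ue\psi=\io\ue\,\Delta\psi+\io\ue\,f(|\nabla\ve|^2)\nabla\ve\cdot\nabla\psi,
\]
attributing this to ``the Neumann boundary conditions'' of the system. But the boundary conditions in \eqref{Sys:Reg} only kill the boundary terms coming from $\ue$ and $\ve$. Integrating $\io\Delta\ue\,\psi$ by parts twice produces the extra term $-\int_{\pa\Omega}\ue\,(\nabla\psi\cdot\nu)$, which does not vanish for an arbitrary $C^2$ test function, and you have no control on the trace of $\ue$ on $\pa\Omega$ (only an $L^1(\Omega)$ bound, which blows up in stronger norms as $t\dwto 0$). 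Stopping after one integration by parts does not help either, since then you would be facing $\io\nabla\ue\cdot\nabla\psi$, and there is no uniform-in-$\ep$ bound for $\nabla\ue$ near $t=0$. The remedy is exactly what the paper does: approximate $\varphi$ by elements of $X:=\{\psi\in C^2(\Ombar):\nabla\psi\cdot\nu=0\text{ on }\pa\Omega\}$, whose density in $C^0(\Ombar)$ is not entirely trivial and is justified there by reference to a separate result (a lemma in the Li--Winkler paper). With $\psi\in X$ the boundary term disappears and your computation is valid. The remainder of your argument — the mass estimate $m\|\Delta\psi\|_{L^\infty}t$, the pointwise bound $f(|\nabla\ve|^2)|\nabla\ve|\le k_f|\nabla\ve|^{1-2\alpha}$, Lemma~\ref{Lem:taxisL1} with an $r$ satisfying \eqref{param-0:r} and positivity of the exponent $1-\tfrac n2(1-\tfrac1r)$ — all matches the paper and is correct.
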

\begin{proof}
We first note that the set
\[
    X := \{\varphi \in C^2(\Ombar) \mid \varphi \cdot \nu = 0
    \ \mbox{on}\ \pa\Omega\}
\]
is dense in $C^0(\Ombar)$.
Indeed, this can be seen by adapting the argument
from \cite[Lemma 3.6]{LW-2023}.
From this, we fix $\varphi \in C^0(\Ombar)$ and $\delta > 0$,
then we find $\psi \in X$ such that
\begin{align}\label{ueuie:4mdelta}
    \nrm{\varphi - \psi}{L^{\infty}} \le \frac{\delta}{4m}.
\end{align}
%
We claim that there exists $t_0 \in (0,1)$ such that
\begin{align}\label{ueuie:claim}
    \left|\io \ue(\cdot, t)\psi - \io \uie\psi\right| \le \frac{\delta}{2}
\end{align}
for all $t \in (0, t_0)$ and $\ep \in (0,1)$.
To see this, we let $r$ satisfy \eqref{param-0:r},
then there is $c_1 > 0$ such that
\[
    \int_0^t \nrm{\ue(\cdot, \sigma) |\nabla \ve(\cdot, \sigma)|^{1-2\alpha}}
    {L^1} \,\intd{\sigma}
    \le c_1 t^{1-\frac{n}{2}(1-\frac{1}{r})}
\]
for all $t \in (0,1)$ and $\ep \in (0,1)$ by Lemma~\ref{Lem:taxisL1},
whence the first equation in \eqref{Sys:Reg}
along with \eqref{Ass:f:Main}
and \eqref{Mass_App} implies that
\begin{align*}
    &\left|\io \ue(\cdot, t)\psi - \io \uie\psi\right|
\\
    &\quad\,= \left|\int_0^t \io (\ue)_t \psi\right|
\\
    &\quad\,\le \left|\int_0^t \io \ue \Delta \psi\right|
    + \left|\int_0^t \io \ue f(|\nabla \ve|^2) \nabla \ve \cdot \nabla \psi\right|
\\
    &\quad\,\le \nrm{\Delta\psi}{L^{\infty}}
      \int_0^t \nrm{\ue(\cdot, \sigma)}{L^1} \, \intd{\sigma}
    + k_f \nrm{\nabla \psi}{L^{\infty}}
    \int_0^t \nrm{\ue(\cdot, \sigma) |\nabla \ve(\cdot, \sigma)|^{1-2\alpha}}
    {L^1} \,\intd{\sigma}
\\
    &\quad\,\le m\nrm{\Delta\psi}{L^{\infty}} t
      + c_1 k_f \nrm{\nabla \psi}{L^{\infty}}t^{1-\frac{n}{2}(1-\frac{1}{r})}
\end{align*}
for any $t \in (0,1)$ and $\ep \in (0,1)$.
Since the right-hand side of this inequality is independent of
$\ep \in (0,1)$ and tends to $0$ as $t\dwto 0$,
this establishes the claim.
Now for $t \in (0, t_0)$ and $\ep \in (0,1)$, we have
\begin{align}
\notag
        &\left|\io \ue(\cdot, t)\varphi - \io \uie\varphi\right|
\\
    &\quad\,\le
      \left|\io \ue(\cdot, t)\varphi - \io \ue(\cdot, t)\psi\right|
      + \left|\io \ue(\cdot, t)\psi - \io \uie\psi\right|
      + \left|\io \uie\psi - \io \uie\varphi\right|.
\label{ueuie:split}
\end{align}
Since according to \eqref{uie:mass}, \eqref{Mass_App} and \eqref{ueuie:4mdelta}
we know that
\[
    \left|\io \ue(\cdot, t)\varphi - \io \ue(\cdot, t)\psi\right|
    + \left|\io \uie\psi - \io \uie\varphi\right|
    \le \frac{\delta}{2}
\]
for all $t \in (0, t_0)$ and $\ep \in (0,1)$.
In light of \eqref{ueuie:split},
in conjunction with \eqref{ueuie:claim} this completes the proof.
%
%
%
\end{proof}

Having at hand the important information gained in Lemma~\ref{Lem:ueuie},
we are now in a position to establish the property \eqref{Result:u}
for the function $u$ from Lemma~\ref{Lem:uvC21}.
%
%
\begin{lem}\label{Lem:Limit_u}
The function $u$ constructed in Lemma~\ref{Lem:uvC21}
has the property that
\[
    u(\cdot, t) \wsc \mu_0
    \quad\mbox{in}\ \Radon\quad \mbox{as}\ t \dwto 0.
\]
\end{lem}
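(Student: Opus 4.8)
The plan is to show that for every fixed test function $\varphi \in C^0(\Ombar)$ we have $\io u(\cdot,t)\varphi \to \mu_0(\varphi)$ as $t\dwto 0$; by the Riesz identification this is exactly the asserted weak-$\star$ convergence. The argument combines the $\ep$-uniform vague continuity of the approximate solutions near $t=0$ from Lemma~\ref{Lem:ueuie}, the vague convergence $\uie \wsc \mu_0$ from \eqref{uie:Radon}, and the locally uniform convergence $\ue \to u$ from Lemma~\ref{Lem:uvC21}.

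\textbf{Step 1 (reduction to a single test function).} Fix $\varphi \in C^0(\Ombar)$ and $\delta > 0$. It suffices to produce $t_1 = t_1(\varphi,\delta) > 0$ such that $\big|\io u(\cdot,t)\varphi - \mu_0(\varphi)\big| \le \delta$ for all $t \in (0,t_1)$.

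\textbf{Step 2 (uniform-in-$\ep$ smallness near $t=0$).} Apply Lemma~\ref{Lem:ueuie} with this $\varphi$ and with $\delta/3$ in place of $\delta$, obtaining $t_0 = t_0(\varphi,\delta) \in (0,1)$ with
\[
    \left|\io \ue(\cdot,t)\varphi - \io \uie\varphi\right| \le \frac{\delta}{3}
    \quad\mbox{for all}\ t \in (0,t_0)\ \mbox{and}\ \ep \in (0,1).
\]

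\textbf{Step 3 (closing the measure-data approximation).} From \eqref{uie:Radon}, choose $j_0$ large enough that $\big|\io \uej\varphi - \mu_0(\varphi)\big| \le \delta/3$ for all $j \ge j_0$.

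\textbf{Step 4 (passing to the limit $\ep = \ep_j \dwto 0$ at fixed $t$).} Fix any $t \in (0,t_0)$. Since $\ue \to u$ in $C^{2,1}_{\rm loc}(\Ombar \times (0,\infty))$ by Lemma~\ref{Lem:uvC21}, in particular $\ue(\cdot,t) \to u(\cdot,t)$ uniformly on $\Ombar$, hence $\io \uej(\cdot,t)\varphi \to \io u(\cdot,t)\varphi$ as $j \to \infty$. Combining Steps 2--4 via the triangle inequality
\[
    \left|\io u(\cdot,t)\varphi - \mu_0(\varphi)\right|
    \le \left|\io u(\cdot,t)\varphi - \io \uej(\cdot,t)\varphi\right|
    + \left|\io \uej(\cdot,t)\varphi - \io \uej\varphi\right|
    + \left|\io \uej\varphi - \mu_0(\varphi)\right|,
\]
and sending $j \to \infty$ with $j \ge j_0$ (so the last two terms are each $\le \delta/3$, the latter uniformly in $t \in (0,t_0)$ by Step 2) while the first term vanishes, we obtain $\big|\io u(\cdot,t)\varphi - \mu_0(\varphi)\big| \le \frac{2\delta}{3} \le \delta$ for all $t \in (0,t_0)$. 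Taking $t_1 := t_0$ finishes Step 1, and since $\varphi$ and $\delta$ were arbitrary, the proof is complete.

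\textbf{The main subtlety} is purely bookkeeping: one must keep the dependence of $t_0$ on $\varphi,\delta$ (but not on $\ep$) straight, and use that the middle term in the triangle inequality is controlled \emph{uniformly in $\ep$} by Lemma~\ref{Lem:ueuie} before the limit $\ep = \ep_j \dwto 0$ is taken — the limit is only used on the first term, where the locally uniform convergence of Lemma~\ref{Lem:uvC21} applies at the fixed interior time $t > 0$. No new estimates are needed; everything has been prepared in the earlier lemmas.
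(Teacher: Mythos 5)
Your proof is correct and follows essentially the same route as the paper: split via the triangle inequality into the $C^{2,1}_{\rm loc}$-convergence term from Lemma~\ref{Lem:uvC21}, the $\ep$-uniform near-$t=0$ term from Lemma~\ref{Lem:ueuie}, and the initial-data approximation term from \eqref{uie:Radon}. The only cosmetic difference is that you fix $j_0$ once and let $j\to\infty$ at each fixed $t$, whereas the paper selects a single $\ep(t)$ per time; both are valid and the bookkeeping is handled correctly.
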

\begin{proof}
For arbitrary fixed $\varphi \in C^0(\Ombar)$ and $\delta > 0$,
Lemma~\ref{Lem:ueuie} provides $t_0 = t_0(\varphi, \delta) > 0$
fulfilling
\begin{align}\label{Limit_u:ueuie}
    \left|\io \ue(\cdot, t)\varphi - \io \uie\varphi\right| \le \frac{\delta}{3}
\end{align}
for all $t\in (0, t_0)$ and $\ep \in (0,1)$,
whereas \eqref{uvC21:uC21} and \eqref{uie:Radon} guarantee that
for any $t \in (0, t_0)$, there exists $\ep(t) \in (0,1)$ such that
\[
    \left|\io u(\cdot, t)\varphi - \io u_{\ep(t)}(\cdot, t)\varphi\right|
    \le \frac{\delta}{3}
      \quad\mbox{and}\quad
    \left|\io u_{0\ep(t)}\varphi - \mu_0(\varphi)\right|
    \le \frac{\delta}{3}.
\]
Together with \eqref{Limit_u:ueuie} this proves
\[
    \left|\io u(\cdot, t) - \mu_0(\varphi)\right| \le \delta
\]
for all $t \in (0, t_0)$, which concludes the proof.
\end{proof}

The only remaining step then is to show continuity of $v$ at $t = 0$
in the sense of \eqref{Result:v}.
To this end, we will follow an approach inspired by a strategy
already pursed in \cite[Lemma 5.4]{H-2023P},
namely estimating the norm
$\nrm{\ve(\cdot, t) - \vie}{W^{1,q}}$
based on the semigroup estimates from Lemma~\ref{Lem:Semi}.
%
%
\begin{lem}\label{Lem:Limit_v}
Let the function $v$ be as in Lemma~\ref{Lem:uvC21}.
Then
\[
    v(\cdot, t) \to v_0
    \quad\mbox{in}\ W^{1,q}(\Omega)\quad \mbox{as}\ t\dwto 0.
\]
\end{lem}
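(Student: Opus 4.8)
The plan is to estimate $\nrm{v(\cdot,t)-v_0}{W^{1,q}}$ by first passing to the approximate solutions: since $\ve\to v$ in $C^{2,1}_{\rm loc}(\Ombar\times(0,\infty))$ by \eqref{uvC21:vC21}, for each fixed $t>0$ we have $\ve(\cdot,t)\to v(\cdot,t)$ in $W^{1,q}(\Omega)$ along the sequence $\ep=\ep_j\dwto 0$, so it suffices to obtain an $\ep$-independent bound on $\nrm{\ve(\cdot,t)-\vie}{W^{1,q}}$ that tends to $0$ as $t\dwto 0$, and then combine this with the convergence $\vie\to v_0$ in $W^{1,q}(\Omega)$ from \eqref{Conv_vie}. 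Writing $\ve(t)=e^{-t}e^{t\Delta}\vie+\int_0^t e^{-(t-\sigma)}e^{(t-\sigma)\Delta}\frac{\ue(\sigma)}{1+\ep\ue(\sigma)}\,\intd{\sigma}$ via the variation-of-constants formula, I would split $\ve(\cdot,t)-\vie = (e^{-t}e^{t\Delta}\vie-\vie) + \int_0^t e^{-(t-\sigma)}e^{(t-\sigma)\Delta}\frac{\ue(\sigma)}{1+\ep\ue(\sigma)}\,\intd{\sigma}$ and estimate the two pieces separately in $W^{1,q}$.

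For the first piece, $\nrm{e^{-t}e^{t\Delta}\vie-\vie}{W^{1,q}}$: since $(\vie)$ is precompact in $W^{1,q}(\Omega)$ (it converges by \eqref{Conv_vie}), and the family of operators $(e^{-t}e^{t\Delta})_{t\in[0,1]}$ is uniformly bounded on $W^{1,q}$ and converges strongly to the identity as $t\dwto 0$, an Arzelà--Ascoli / equicontinuity-on-compacts argument gives $\sup_{\ep\in(0,1)}\nrm{e^{-t}e^{t\Delta}\vie-\vie}{W^{1,q}}\to 0$ as $t\dwto 0$; alternatively one can bound it by $\nrm{e^{-t}e^{t\Delta}\vie-e^{-t}e^{t\Delta}v_0}{W^{1,q}}+\nrm{e^{-t}e^{t\Delta}v_0-v_0}{W^{1,q}}+\nrm{v_0-\vie}{W^{1,q}}$, where the first and third terms are small uniformly in $t$ by \eqref{Conv_vie} and the second is small by continuity of the semigroup at $t=0$. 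For the second, integral piece, I would control the $L^q$ norm of its gradient using Lemma~\ref{Lem:Semi}~\eqref{Semi-2} with exponents chosen so that $\frac{\ue}{1+\ep\ue}\le\ue$ can be taken in $L^{\rho}(\Omega)$ for a suitable $\rho\in(1,\frac{n}{n-1})$ with $1-\frac{n}{2}(\frac{1}{\rho}-\frac{1}{q})>0$, and invoke the near-$t=0$ integrability \eqref{ueLr-0:Ct}--\eqref{ueLr-0:IntCt} of Lemma~\ref{Lem:ueLr-0} (with $\gamma$ chosen appropriately) to bound $\int_0^t(1+(t-\sigma)^{-\frac12-\frac{n}{2}(\frac1\rho-\frac1q)})\nrm{\ue(\cdot,\sigma)}{L^\rho}\,\intd{\sigma}$ by a constant times a positive power of $t$; here one must check the parameters fit, namely that $q<\frac{n}{n-1}$ (hypothesis \eqref{Ass:q}) allows picking $\rho>1$ with $\rho\le r$ for an admissible $r$ from \eqref{param-0:r} and with the time-singularity exponent $\frac12+\frac{n}{2}(\frac1\rho-\frac1q)<1$. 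Combined, these give $\nrm{\ve(\cdot,t)-\vie}{W^{1,q}}\le o(1)$ as $t\dwto 0$, uniformly in $\ep$.

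Finally, I would assemble: given $\delta>0$, choose $t_0$ so that the uniform-in-$\ep$ bound above is $<\delta/3$ for $t<t_0$; for such $t$, using $\nrm{v(\cdot,t)-v_0}{W^{1,q}}\le\nrm{v(\cdot,t)-\ve(\cdot,t)}{W^{1,q}}+\nrm{\ve(\cdot,t)-\vie}{W^{1,q}}+\nrm{\vie-v_0}{W^{1,q}}$, pick $\ep=\ep_j$ small (depending on $t$) to make the first term $<\delta/3$ via \eqref{uvC21:vC21} and, by \eqref{Conv_vie}, the third term $<\delta/3$; hence $\nrm{v(\cdot,t)-v_0}{W^{1,q}}\le\delta$ for all $t\in(0,t_0)$, which is the assertion.

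I expect the main obstacle to be the bookkeeping in the gradient estimate of the integral term: one has to simultaneously respect the constraint $\frac1q-\frac1p<\frac1n$-type conditions built into the semigroup smoothing estimates, stay within the range of exponents for which Lemma~\ref{Lem:ueLr-0} supplies $\ep$-independent $L^r$ control near $t=0$, and keep all emerging time exponents positive so that the bound vanishes as $t\dwto 0$ — this is exactly where the hypothesis $q<\frac{n}{n-1}$ (for $n\ge2$) and $q>\max\{1,(1-2\alpha)n\}$ are used. The strong-continuity-at-$t=0$ part for the homogeneous piece is routine once the precompactness of $(\vie)$ in $W^{1,q}$ is invoked.
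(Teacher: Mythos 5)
Your proposal is correct and follows the same skeleton as the paper's proof: Duhamel decomposition of $\ve(\cdot,t)-\vie$ into a homogeneous piece and an integral piece, an $\ep$-uniform smallness estimate as $t\dwto 0$, and then the final triangle inequality choosing $\ep=\ep_j$ depending on $t$ via \eqref{uvC21:vC21} and \eqref{Conv_vie}. The differences are in the two sub-estimates. For the homogeneous piece, the paper exploits the specific definition \eqref{Def:vie} and the semigroup law to write $e^{t(\Delta-1)}\vie-\vie=e^{\ep(\Delta-1)}(e^{t(\Delta-1)}v_0-v_0)$, so uniform boundedness of $e^{\ep(\Delta-1)}$ on $W^{1,q}(\Omega)$ reduces everything to strong continuity at $t=0$ applied to the single function $v_0$; your triangle-inequality/precompactness argument proves the same thing with slightly more effort and would also cover more general approximations of $v_0$. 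For the integral piece, the paper is substantially more economical than what you propose: it takes the source $\frac{\ue}{1+\ep\ue}$ only in $L^1(\Omega)$, where mass conservation \eqref{Mass_App} gives the bound $m$ for free, and applies Lemma~\ref{Lem:Semi}~\eqref{Semi-1} (after subtracting the spatial mean) and \eqref{Semi-2} from $L^1$ to $L^q$; the resulting singularity $\frac12+\frac n2(1-\frac1q)$ is integrable precisely because $q<\frac{n}{n-1}$, yielding the positive power $t^{\frac12-\frac n2(1-\frac1q)}$. Your detour through $L^\rho$ with $\rho>1$ and Lemma~\ref{Lem:ueLr-0} does close (the exponents telescope to the same $\frac12-\frac n2(1-\frac1q)>0$ when $\gamma=\rho$), but it invokes the hardest estimate of the paper where none is needed, and it requires a $(0,\frac t2)$/$(\frac t2,t)$ splitting to handle the product of the singular kernel with the singular bound \eqref{ueLr-0:Ct} — one cannot simply pair the kernel with \eqref{ueLr-0:IntCt} as written. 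So: correct, same architecture, with an avoidable complication in the Duhamel estimate.
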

\begin{proof}
Fixing $\delta > 0$, we know from the second equation in \eqref{Sys:Reg} that
\begin{align}
\notag
    \nrm{\ve(\cdot, t) - \vie}{W^{1,q}}
    &\le \nrm{e^{t(\Delta - 1)}\vie - \vie}{W^{1,q}}
    + \int_0^t \left\|
      e^{(t-\sigma)(\Delta - 1)} \frac{\ue(\cdot,\sigma)}
      {1 + \ep\ue(\cdot,\sigma)}\right\|_{W^{1,q}(\Omega)}\,\intd{\sigma}
\\
    &=: J_{1,\ep}(t) + J_{2,\ep}(t)
    \label{Limit_v:J1epJ2ep}
\end{align}
for all $t > 0$ and $\ep \in (0,1)$,
whence the definition
\eqref{Def:vie} of $\vie$ together with
the standard properties of the Neumann heat semigroup
as in \cite[Lemma 5.4]{H-2023P}
provides $t_1 > 0$ such that
%
\begin{align}\label{Limit_v:J1ep}
    J_{1,\ep}(t)
    = \nrm{e^{\ep(\Delta-1)}(e^{t(\Delta-1)}v_0 - v_0)}{W^{1,q}}
    \le \frac{\delta}{4}
\end{align}
%
for all $t \in (0, t_1)$ and $\ep \in (0,1)$.
On the other hand,
in light of \eqref{Mass_App}
we employ Lemma~\ref{Lem:Semi}~\eqref{Semi-1}
and \eqref{Semi-2} to derive that
with some $c_1 > 0$,
\begin{align}
\notag
    &J_{2,\ep}(t)
\\ \notag
    &\quad\,= \int_0^t \left\|
      e^{(t-\sigma)(\Delta - 1)} \frac{\ue(\cdot,\sigma)}
      {1 + \ep\ue(\cdot,\sigma)}\right\|_{L^{q}(\Omega)}\,\intd{\sigma}
      + \int_0^t \left\|\nabla
      e^{(t-\sigma)(\Delta - 1)} \frac{\ue(\cdot,\sigma)}
      {1 + \ep\ue(\cdot,\sigma)}\right\|_{L^{q}(\Omega)}\,\intd{\sigma}
\\ \notag
    &\quad\,\le c_1 m \int_0^t \big(1 + (t-\sigma)^{-\frac{n}{2}\big(1 - \frac{1}{q}\big)}\big)
      \,\intd{\sigma}
    + \frac{m}{|\Omega|^{1-\frac{1}{q}}} t
    + c_1 m \int_0^t \big(1 + (t-\sigma)^{-\frac{1}{2}-\frac{n}{2}\big(
      1 - \frac{1}{q}\big)}\big) 
      \, \intd{\sigma}
\\
    &\quad\,= \left(2 c_1 m + \frac{m}{|\Omega|^{1-\frac{1}{q}}}\right)t
    + \frac{c_1 m}{1 - \frac{n}{2}\big(1 - \frac{1}{q}\big)}
      t^{1-\frac{n}{2}\big(1 - \frac{1}{q}\big)}
    + \frac{c_1m}{\frac{1}{2} - \frac{n}{2}\big(1 - \frac{1}{q}\big)}
      t^{\frac{1}{2} - \frac{n}{2}\big(1 - \frac{1}{q}\big)}
\label{Limit_v:J2right}
\end{align}
for all $t > 0$ and $\ep \in (0,1)$.
According to \eqref{Ass:q}, we see that
$\frac{1}{2} - \frac{n}{2}(1 - \frac{1}{q}) > 0$ holds
and hence the right-hand side of \eqref{Limit_v:J2right}
tends to $0$ as $t \dwto 0$.
We thereby obtain $t_2 > 0$ such that
%
\begin{align}\label{Limit_v:J2ep}
    J_{2,\ep}(t) \le \frac{\delta}{4}
\end{align}
for any $t \in (0, t_2)$ and $\ep \in (0,1)$.
Hence, inserting \eqref{Limit_v:J1ep} and \eqref{Limit_v:J2ep} into
\eqref{Limit_v:J1epJ2ep}, we arrive at
\begin{align}\label{Limit_v:vevie}
    \nrm{\ve(\cdot, t) - \vie}{W^{1,q}} \le \frac{\delta}{2}
\end{align}
for all $t \in (0, t_0)$ and $\ep \in (0,1)$, where
$t_0 := \min\{t_1, t_2\}$.
Now we infer from \eqref{uvC21:vC21} and \eqref{Conv_vie} that
for each $t \in (0, t_0)$ there is $\ep(t) \in (0,1)$ satisfying
\[
    \nrm{v(\cdot, t) - v_{\ep(t)}(\cdot, t)}{W^{1,q}}
    \le \frac{\delta}{4}
      \quad\mbox{and}\quad
    \nrm{v_{0\ep(t)} - v_0}{W^{1,q}}
    \le \frac{\delta}{4}.
\]
Finally, combining this with \eqref{Limit_v:vevie} readily implies
\[
    \nrm{v(\cdot, t) - v_0}{W^{1,q}} \le \delta
\]
for all $t \in (0, t_0)$, which yields the claim.
%
%
%
%
%
%
%
%
\end{proof}

Our main result has thereby been established already.

\begin{prth1.1}
If $\alpha > 0$ satisfies \eqref{param-0:alpha},
we take $u, v \in C^{\infty}(\Ombar \times (0, \infty))$ as given by
Lemma~\ref{Lem:uvC21} and then only need to combine
Lemma~\ref{Lem:Limit_u} with Lemma~\ref{Lem:Limit_v},
while in view of Remark~\ref{Rem:param-0:alpha},
it can be verified that the result continues to hold
for the case $\alpha \ge \frac{1}{2}$.
\qed
\end{prth1.1}

%
\smallskip
\section*{Acknowledgments}
The author would like to express gratitude to Professor Tomomi Yokota for
his warm encouragement and helpful comments on the manuscript.

%

\small\bibliographystyle{plain}

\end{document}